\numberwithin{equation}{section}
\newcommand{\e}{\epsilon}
\newcommand{\ga}{\gamma}
\newcommand{\de}{\delta}
\newcommand{\br}{\mathbb{R}}
\newcommand{\ik}{\varphi}
\newcommand{\pa}{\partial}
\newcommand{\bt}{\beta}
\newcommand{\al}{\alpha}
\newcommand{\la}{\lambda}
\newcommand{\be}{\begin{equation}}
\newcommand{\ee}{\end{equation}}
\newcommand{\tth}{\tilde\nu}
\newcommand{\auxang}{\nu}
\newcommand{\dir}{\Theta}
\newcommand{\dd}{\text{d}}
\newcommand{\frec}{f_\e^{\text{rec}}}
\newcommand{\bma}{\begin{pmatrix}}
\newcommand{\ema}{\end{pmatrix}}
\newcommand{\s}{\mathcal S}
\newcommand{\gl}{g_l}
\newcommand{\gc}{g}
\newcommand{\CH}{\mathcal H}
\newcommand{\Psil}{\Psi_l}
\newcommand{\DTB}{\text{DTB}}
\newtheorem{theorem}{Theorem}
\newtheorem{lemma}{Lemma}
\newtheorem{definition}{Definition}
\newtheorem*{conjecture}{Conjecture}
\begin{document}

\title[Analysis of resolution]{Resolution of 2D reconstruction of functions with nonsmooth edges from discrete Radon transform data}
\author[A Katsevich]{Alexander Katsevich$^1$}
\thanks{$^1$This work was supported in part by NSF grant DMS-1906361. Department of Mathematics, University of Central Florida, Orlando, FL 32816 (Alexander.Katsevich@ucf.edu). }

\begin{abstract} 
Let $f$ be an unknown function in $\mathbb R^2$, and $f_\epsilon$ be its reconstruction from discrete Radon transform data, where $\epsilon$ is the data sampling rate. We study the resolution of reconstruction when $f$ has a jump discontinuity along a nonsmooth curve $\mathcal S_\epsilon$. The assumptions are that (a) $\mathcal S_\epsilon$ is an $O(\epsilon)$-size perturbation of a smooth curve $\mathcal S$, and (b) $\mathcal S_\epsilon$ is Holder continuous with some exponent $\gamma\in(0,1]$. We compute the Discrete Transition Behavior (or, DTB) defined as the limit $\text{DTB}(\check x):=\lim_{\epsilon\to0}f_\epsilon(x_0+\epsilon\check x)$, where $x_0$ is generic. We illustrate the DTB by two sets of numerical experiments. In the first set, the perturbation is a smooth, rapidly oscillating sinusoid, and in the second - a fractal curve. The experiments reveal that the match between the DTB and reconstruction is worse as $\mathcal S_\epsilon$ gets more rough. This is in agreement with the proof of the DTB, which suggests that the rate of convergence to the limit is $O(\epsilon^{\gamma/2})$. We then propose a new DTB, which exhibits an excellent agreement with reconstructions. Investigation of this phenomenon requires computing the rate of convergence for the new DTB. This, in turn, requires completely new approaches. We obtain a partial result along these lines and formulate a conjecture that the rate of convergence of the new DTB is $O(\epsilon^{1/2}\ln(1/\epsilon))$.
\end{abstract}
\maketitle

\section{Introduction}\label{sec_intro}

Action of the Radon transform (and, by extension, its inverse) on distributions is a topic that received considerable attention over the years \cite{gs5, hel2, rk}. From a practical perspective, if an object to be reconstructed has singularities (e.g., jumps or edges), it is important to know how well (e.g., with what resolution) these singularities can be reconstructed when the data are discrete. Convergence of numerical Radon inversion algorithms for non-smooth functions has been studied as well \cite{gonchar86,  pal93, popov90, popov98}. In these works the discontinuities of the object are a complicating factor rather than the object of study. 


Let $f$ represent the unknown function, and $\s$ denote its singular support. Let $\check f$ be a reconstruction from continuous data, and $\check f_\e$ -- the corresponding reconstruction from discrete data, where $\e$ represents the data sampling rate. In the latter case, interpolated discrete data are substituted into the ``continuous'' inversion formula. When theoretically exact reconstruction is desired, $\check f\equiv f$. Generally, $\check f$ does not coincide with $f$. For example, one can be interested in edge-enhanced reconstruction, as in local tomography \cite{fbh01, rk} or when computing  derivatives of $f$ directly from the data \cite{hl12, louis16}. In other cases, e.g., for more general Radon transforms, an exact inversion formula may not exist. In this case one usually reconstructs $f$ modulo less singular terms, i.e. $f-\check f$ is smoother than $f$. 

In \cite{Katsevich2017a, kat19a, kat20b, kat20a, Kats_21resol} the author developed the analysis of reconstruction,  called {\it local resolution analysis}, by focusing specifically on the behavior of $\check f_\e$ near $\s$. One of the main results of these papers is the computation of the limit 
\be\label{tr-beh}
\DTB(\check x)
:=\lim_{\e\to0}\e^\kappa \check f_\e(x_0+\e\check x)
\ee
in a variety of settings.
Here $x_0\in\s$ is generic (see Definition~\ref{def:gen pt} below), $\kappa\ge0$ is selected based on the strength of the singularity of $\check f$ at $x_0$, and $\check x$ is confined to a bounded set. It is important to emphasize that both the size of the neighborhood around $x_0$ and the data sampling rate go to zero simultaneously in \eqref{tr-beh}. The limiting function $\DTB(\check x)$, which we call the discrete transition behavior (or DTB for short), contains complete information about the resolution of reconstruction.  

The practical use of the DTB is based on the relation
\be\label{DTB orig use}
\check f_\e(x_0+\e\check x)=\e^{-\kappa}\DTB(\check x)+\text{error term}.
\ee
When $\e>0$ is sufficiently small, the error term is negligible, and $\e^{-\kappa}\DTB(\check x)$, which is typically computed by a simple formula, is an accurate approximation to the numerical reconstruction.

The functions, which have been investigated in the framework of local resolution analysis so far, are conormal distributions, whose wave front set coincides with the conormal bundle of a smooth surface. To put it another way, these distributions are nonsmooth across a smooth surface, and are smooth along it. On the other hand, in many applications the discontinuities of a sample $f$ occur across non-smooth (rough) surfaces. Examples include soil and rock imaging, where the surface of cracks and pores is highly irregular and frequently simulated by fractals \cite{Anovitz2015, GouyetRosso1996, Li2019, soilfractals2000, PowerTullis1991, Renard2004, Zhu2019}. 

Micro-CT (i.e., CT capable of achieving micrometer resolution) is an important tool for imaging of rock samples extracted from the well. Larger samples are called rock cores, and smaller samples are called plugs. As stated in \cite{Zhu2019}, ``The simulation of various rock properties based on three-dimensional digital cores plays an increasingly important role in oil and gas exploration and development. The accuracy of 3D digital core reconstruction is important for determining rock properties.'' Here the term ``digital core'' refers to a digital representation of the rock core obtained, for example, as a result of CT or micro-CT scanning and reconstruction.
%
Accurate identification of the pore space inside rock samples is of utmost importance because it contributes to accurate estimation of the amount of hydrocarbon reserves in a given formation and brings many additional benefits. As stated above, the boundary between the solid matrix and the pore space is typically rough (see also \cite{Cherk2000}), i.e., it contains features across a wide range of scales, including the scales below what is accessible with micro-CT. Therefore the effects that degrade the resolution of micro-CT (e.g., the partial volume effect due to finite data sampling) and how these effects manifest themselves in the presence of rough boundaries require careful investigation. Once fully understood and quantified, these effects can be accounted for to improve pore space determination when analysing the reconstructed images.

Very little is known about how the Radon transform acts on distributions with more complicated singularities. A recent literature search reveals a small number of works, which investigate the Radon transform acting on random fields \cite{JainAnsary1984, Sanz1988, Medina2020}. For example, the author did not find any publication on the Radon transform of characteristic functions of domains with rough boundaries. This appears to be the first paper on the Radon transform of functions with rough edges.

In this paper we use the local resolution analysis in $\br^2$ to study the resolution of reconstruction when $f$ has a jump discontinuity across a nonsmooth curve $\s_\e$. Exact reconstruction from the classical Radon transform data is considered, i.e. $\check f(x)\equiv f(x)$ and $\kappa=0$. 
The assumption is that $\s_\e$ is Holder continuous with some exponent $\ga\in(0,1]$. We assume that $\s_\e$ is a small perturbation of a smooth curve $\s$. The perturbation is of size $O(\e)$ along the direction normal to $\s$, and the perturbation scales like $O(\e^{1/2})$ along the direction tangential to $\s$. Due to the linearity of the Radon transform, we can assume that $f$ is supported in the narrow domain bounded by $\s$ and $\s_\e$. The function supported in this domain is denoted $f_\e(x)$, and the reconstruction of $f_\e(x)$ from discrete data is denoted $\frec(x)$. We obtain the DTB and illustrate it by two sets of numerical experiments. In the first set, the perturbation $\s\to\s_\e$ is a smooth sinusoid with amplitude $O(\e)$ and period $O(\e^{1/2})$. 
Results of these experiments with $\e=\e_1=1.2/500$ and $\e=\e_2=1.2/1000$ demonstrate a good agreement between the DTB and reconstruction. 

The second set involves a fractal perturbation, which is specified in terms of the Weiertsrass-Mandlebrot function \cite{BerryLewis1980, Borodich1999}. As before, the magnitude of the perturbation is $O(\e)$, and it scales like $O(\e^{1/2})$ along $\s$. Its Holder exponent is $\ga=1/2$. It turns out that the match between the DTB and reconstruction is now much worse than before for the same two values $\e=\e_{1,2}$. Note that the DTB is an accurate approximation to the reconstruction only when $\e>0$ is sufficiently small. Analysis of the derivation of the DTB suggests (but not proves) that the rate of convergence in \eqref{tr-beh} is $O(\e^{\ga/2})$ (which is also the magnitude of the error term in \eqref{DTB orig use}). In other words, the rougher $\s_\e$ is, the slower the convergence and the larger the error. Therefore, to obtain a good match when $\s_\e$ is fractal a much smaller value $\e\ll\e_2$ should be used.

Analysis of the reconstruction formula reveals a potentially more accurate expression for $\frec(x_0+\e\check x)$. Even though the DTB was originally defined as the limit in \eqref{tr-beh}, with a slight abuse of notation, any easily computable approximation to $\frec(x_0+\e\check x)$ will be called a DTB as well and denoted $\DTB_{new}$. In particular, $\DTB_{new}$ may have a more complicated $\e$-dependence than  the one in \eqref{tr-beh}:
\be\label{DTB new use}
\frec(x_0+\e\check x)=\DTB_{new}(\check x,\e)+\text{error term}.
\ee
The idea is that by allowing a more general $\e$-dependence, the error term in \eqref{DTB new use} can be smaller than the one in \eqref{DTB orig use}.

Numerical experiments with the new DTB show a perfect match between $\DTB_{new}$ and reconstruction for the two values $\e=\e_{1,2}$ used before. The two results do not contradict each other, because the original, less accurate DTB is a small-$\e$ limit of the new, more accurate DTB. 

Rigorous derivation of $\DTB_{new}$ is significantly more difficult than that of the original one. Even the proof of the original DTB \eqref{tr-beh} (see Sections~\ref{sec:beg proof}--\ref{sec: rem sing}) establishes the existence of convergence, but not its rate (see the last paragraph in Subsection~\ref{ssec:f11}). To prove that $\DTB_{new}$ is indeed more accurate, one needs to estimate its approximation error (and that of the original DTB). We distinguish two cases: $x_0\in\s$ and $x_0\not\in\s$, and prove that in the second case, assuming in addition that there is no line through $x_0$ which is tangent to $\s$, the rate of convergence of $\DTB_{new}$ is $O(\e^{1/2}\ln(1/\e))$. Based on this result and numerical evidence, we formulate the conjecture that the same rate holds in the remaining, unproven cases. Our proof of the second case uses different tools and, at its core, uses a phenomenon different from the one in the proof of the original DTB. The proof of the remaining cases is difficult,  requires entirely new approaches, and is outside the scope of this paper. 

The new DTB (see eq. \eqref{new DTB} below) is given by a convolution of an explicitly computed and suitably scaled kernel with $f_\e$. Thus, it can be used quite easily to investigate partial volume effects and resolution in the case of rough (e.g., fractal) boundaries. Superficially, this kernel resembles the point spread function (PSF) of filtered backprojection reconstruction \cite[Section 12.3]{eps08}. Nevertheless, its origin, use (analysis of reconstruction in a neighborhood of a singularity of $f$), and method of proof are all completely different from those for the PSF.

To summarize, the main results of the paper are as follows:
\begin{enumerate}
\item Derivation of the original DTB;
\item Numerical demonstration that the accuracy of the original DTB drops as the curve $\s_\e$, across which $f$ is discontinuous, becomes less smooth (fractal);
\item A new DTB is proposed, which is shown numerically to be much more accurate than the original one for fractal $\s_\e$; and
\item A conjecture about the accuracy of the new DTB and its proof in the case $x_0\not\in\s$ under some additional assumptions.
\end{enumerate}

The paper is organized as follows. In Section~\ref{sec:a-prelims}, we describe the problem setup, state the relevant result from our earlier paper \cite{Katsevich2017a} as Theorem~\ref{lem-phi-lim}, and formulate the first result of this paper - a formula for the original DTB \eqref{tr-beh} - as Theorem~\ref{main-res}.  Section~\ref{sec:beg proof} contains most of the proof of the theorem. 
In Section~\ref{sec: dtb ex} we compute the original DTB explicitly and consider two theoretical examples. In the first one the perturbation is a constant function along $\s$, and we recover the result of \cite{Katsevich2017a}. In the second example we consider a fractal perturbation specified in terms of a Weiertsrass-Mandlebrot function. The fact that the perturbed boundary $\s_\e$ does not create non-local artifacts is proven in Section~\ref{sec: rem sing}, thereby finishing the proof of Theorem~\ref{main-res}. Numerical experiments with the original DTB are in Section~\ref{sec numexp I}, where oscillatory and fractal perturbations of $\s$ are considered. In Section~\ref{sec numexp II} we describe $\DTB_{new}$ and present numerical experiments with the new formula. The experiments demonstrate improved accuracy for fractal $\s_\e$. We formulate a conjecture about the accuracy of $\DTB_{new}$, and state Lemma~\ref{lem:partial res} about the magnitude of error when $x_0\not\in\s$. The proof of the lemma is in Appendix~\ref{sec new ker prf}. Let $H_0(s)$ be a function that describes the perturbation $\s\to\s_\e$ (after appropriate rescaling). One of the main assumptions on the perturbation is that the level sets of $H_0$, i.e. the sets $H_0^{-1}(\hat t)$, are well-behaved for any $\hat t\in H_0(\br)$. In Appendix~\ref{sec:bad fn} we construct a function on $\br$, whose level sets are well-behaved as required for the lemma, which is Holder continuous with exponent $\ga$ for any prescribed $0<\ga<1$, but which is not Holder continuous with any exponent $\ga'>\ga$ on a dense subset of $\br$. The proofs of two auxiliary lemmas are in Appendices~\ref{sec:lemI} and \ref{sec:prf-lemPsi}.

\section{Preliminaries}\label{sec:a-prelims}

Consider a compactly supported function $f(x)$ on the plane, $x\in\br^2$. Set $\s:=\{x\in\br^2:f\not\in C^2(U)\text{ for any open }U\ni x\}$. We suppose that 
\begin{itemize}
\item[f1.] For each $x_0\in\s$ there exist a neighborhood $U\ni x_0$, domains $D_\pm$, and functions $f_\pm\in C^2(\br^2)$ such that
\begin{equation}\label{f_def}\begin{split}
& f(x)=\chi_{D_-}(x) f_-(x)+\chi_{D_+}(x) f_+(x),\ x\in U\setminus \s,\\
& D_-\cap D_+=\varnothing,\ D_-\cup D_+=U\setminus \s,
\end{split}
\end{equation}
where $\chi_{D_\pm}$ are the characteristic functions of $D_\pm$,  
\item[f2.] $\s$ is a $C^4$ curve;
\item[f3.]\label{ass:curv} There are finitely many points $x\in \s$ where the curvature of $\s$ equals zero, and these zeroes are of finite order.
\end{itemize}

The discrete tomographic data are given by
\be\label{data_eps}
\hat f_\e(\al_k,p_j):=\frac1\e \iint w\left(\frac{p_j-\vec\al_k\cdot y}{\e}\right)f(y)\dd y,\
p_j=j\Delta p,\ \al_k=k\Delta\al,
\ee
where $w$ is the detector aperture function, $\Delta p=\e$, $\Delta\al=\kappa\e$, and $\kappa>0$ is fixed. 
Here and below, $\vec \al$ and $\al$ in the same equation are always related by $\vec\al=(\cos\al,\sin\al)$. The same applies to $\vec\Theta=(\cos\theta,\sin\theta)$ and $\theta$.

\noindent{\bf Assumptions about the aperture function $w$:}
\begin{itemize}
\item[AF1.] $w$ is even and $w\in C_0^2(\br)$ (i.e., $w$ is compactly supported, and $w''\in L^\infty(\br)$); and 
\item[AF2.] $\int w(p)dp=1$.
\end{itemize}
Reconstruction from discrete data is achieved by the formula
\be\label{recon-orig}
\frec(x)=-\frac{\Delta\al}{2\pi}\sum_{|\al_k|\le \pi/2} \frac1\pi \int \frac{\pa_p\sum_j\ik\left(\frac{p-p_j}\e\right)\hat f_\e(\al_k,p_j)}{p-\al_k\cdot x}\dd p,
\ee
where $\ik$ is an interpolation kernel.

\noindent{\bf Assumptions about the interpolation kernel $\ik$:}
\begin{itemize}
\item[IK1.] $\ik$ is even and $\ik\in C_0^2(\br)$;
\item[IK2.] $\ik$ is exact up to order $1$, i.e. 
\be\label{exactness}
\sum_{j\in\mathbb Z} j^m\ik(u-j)\equiv u^m,\ m=0,1,\ u\in\br.
\ee
\end{itemize}
As is easily seen, assumption IK2 implies $\int\ik(p)dp=1$.

\begin{definition}\cite{Katsevich2017a}\label{def:gen pt}
A point $x_0\in\br^2$ is generic if
\begin{enumerate}
\item No line, which is tangent to $\s$ at a point where the curvature of $\s$ is zero, passes through $x_0$;
\item If $x_0\in\s$, the quantity $\kappa x_0\cdot \vec\tau$ is irrational, where $\vec\tau$ is a unit tangent vector to $\s$ at $x_0$. 
\end{enumerate}
\end{definition}
\noindent
Condition (1) in the definition implies that the curvature of $\s$ at $x_0$ is nonzero if $x_0\in\s$.

Pick a generic point $x_0\in\s$. Let $\vec\dir_0$ be the unit normal to $\s$ at $x_0$, which points from $x_0$ towards the center of curvature of $\s$ at $x_0$. We will call the side of $\s$ where $\vec\dir_0$ points ``positive'', and the opposite side - ``negative''. Without loss of generality, we can assume in \eqref{f_def} that $D_+$ is on the positive side of $\s$, and $D_-$ is on the negative side. We formulate here the relevant result from \cite{Katsevich2017a}.

\begin{theorem}[\cite{Katsevich2017a}]\label{lem-phi-lim} Let (a) $f$ satisfy conditions f1--f3; (b) interpolation kernel $\ik$ satisfy conditions IK1, IK2; and (c) detector aperture function $w$ satisfy conditions AF1, AF2. Suppose $x_0\in\s$ is generic, and let $\vec\dir_0$ be the positive unit normal to $\s$ at $x_0$. If $\frec$ is the reconstruction of the original, unperturbed $f$ from the data \eqref{data_eps} using \eqref{recon-orig}, then 
\be\label{final-lim-v1}
\lim_{\e\to0}\frec(x_0+\e\hat x)= f_-(x_0)+(f_+(x_0)-f_-(x_0))\int_{-\infty}^{\vec\Theta_0\cdot\hat x}(\ik*w)(r)dr,
\ee
where $f_\pm(x_0)$ are the same as in \eqref{f_def}. If $x_0\not\in\s$ is generic, then 
\be\label{final-lim far zone}
\lim_{\e\to0}\frec(x_0+\e\hat x)= 0.
\ee
\end{theorem}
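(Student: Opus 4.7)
My plan is to substitute \eqref{data_eps} into \eqref{recon-orig}, rescale near $x_0$, and reduce the resulting 2D Riemann-type sum to a continuous integral by Poisson summation. I would first decompose $f=f_-+\chi_{D_+}(f_+-f_-)$ with $f_\pm$ extended to global $C^2(\br^2)$ functions. The smooth piece $f_-$ contributes its standard FBP reconstruction, which in the limit equals $f_-(x_0)$ in case (1) (respectively $f(x_0)$ in case (2)) by the usual convergence of discrete FBP for smooth inputs. Everything singular comes from $\chi_{D_+}(f_+-f_-)$; to leading order I would replace it by $(f_+(x_0)-f_-(x_0))$ times the indicator of the half-plane on the positive side of the tangent line to $\s$ at $x_0$, with the quadratic curvature correction controlled by f2 ($C^4$-regularity of $\s$) and the non-vanishing curvature at $x_0$ forced by f3 together with part (1) of Definition~\ref{def:gen pt}.

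In the singular contribution I would rescale by $x=x_0+\e\hat x$, $p=p_j+\e q$ in the outer integral, and $y=x_0+\e\hat y$ in the data integral. The lattices $\al_k=k\kappa\e$ and $p_j=j\e$ become $O(1)$-spaced in the new variables, and the $\e$-factors coming from $\Delta\al$, $\Delta p$, $\pa_p$, the kernel $1/(p-\vec\al_k\cdot x)$, and the $1/\e$ in \eqref{data_eps} cancel. The singular contribution then takes the form of a 2D Riemann sum over $(k,j)\in\mathbb Z^2$ of an $\e$-independent function involving $\ik$, $w$, a half-plane indicator, and $\hat x$ only through the scalar $\vec\Theta_0\cdot\hat x$.

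To pass to $\e\to0$ I would apply Poisson summation in $(k,j)$. The zero Poisson mode is the continuous angular/radial double integral; stationary-phase in the angular variable around the tangent direction $\al_0$ to $\s$ at $x_0$ (using non-vanishing curvature) collapses it to a one-dimensional integral and produces exactly $(f_+(x_0)-f_-(x_0))\int_{-\infty}^{\vec\Theta_0\cdot\hat x}(\ik*w)(r)\,dr$. The non-zero Poisson modes are shown to vanish via the irrationality of $\kappa x_0\cdot\vec\tau$ in part (2) of Definition~\ref{def:gen pt}: this makes the 2D phase of each non-zero mode non-stationary on the support of $\ik\otimes w$, so integration by parts yields $o(1)$ decay uniformly in $\hat x$. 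Case (2) runs through the same scheme: by part (1) of Definition~\ref{def:gen pt} no tangent line through $x_0$ meets $\s$ at a curvature-zero point, so the would-be singular contribution has no critical angular direction and vanishes in the limit, and the limit then reduces to the standard sampling-theoretic value of the smooth contribution at $x_0$.

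The main obstacle I anticipate is sharp control of the non-zero Poisson modes. Because $\ik$ and $w$ are only $C^2_0$, their Fourier transforms decay only algebraically, while the kernel $1/(p-\vec\al_k\cdot x)$ carries a Cauchy-type singularity precisely where I would want to integrate by parts. I expect the argument to require splitting the integration domain into a shrinking $\e$-power neighborhood of the Cauchy singularity (handled by principal-value cancellation) plus its complement (handled by repeated integration by parts using the irrationality-based phase non-stationarity), with the splitting scale chosen to balance both contributions.
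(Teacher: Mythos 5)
The theorem you are proving is actually cited from \cite{Katsevich2017a} and is not reproven in the present paper; however, the paper's own proof of the analogous Theorem~\ref{main-res} (Sections~\ref{sec:beg proof}--\ref{sec: rem sing}) exposes the structure of the argument, and comparison against it reveals a genuine gap in your plan.

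The paper does \emph{not} perform a joint 2D Poisson summation in $(k,j)$. It first sums over $j$ to form the function $\Psil$ in \eqref{frst-sub}, where the exactness of $\ik$ (property IK2, equivalently the vanishing of $\tilde\ik$ at nonzero integer multiples of $2\pi$) is what controls the $p$-lattice part. The $\al$-sum is then handled as a \emph{Riemann sum with shrinking mesh}: after the tangential rescaling $\tilde\al=\al/\e^{1/2}$ used in \eqref{two-defs}, the lattice $\tilde\al_k=k\kappa\e^{1/2}$ has spacing $O(\e^{1/2})$, not $O(1)$. Your claim that ``the lattices $\al_k=k\kappa\e$ and $p_j=j\e$ become $O(1)$-spaced in the new variables'' is therefore incorrect for the angular lattice; under the purely spatial rescaling $y=x_0+\e\hat y$ the $\al$-lattice keeps its $\kappa\e$ spacing and becomes increasingly dense. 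This mismatch of scales between the two lattices is precisely why the paper treats the two sums by different mechanisms, and it undermines the setup you need for a joint Poisson formula.

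The deeper problem is your treatment of the irrationality condition. The irrationality of $\kappa x_0\cdot\vec\tau$ gives equidistribution of the fractional parts $q_k=\{\vec\al_k\cdot y_*(\al_k)/\e\}$ modulo $1$ (Weyl), and that is exactly how the paper passes from the sum in \eqref{partI-two} to the double integral over $(\tilde\al,q)\in[-A,A]\times[0,1]$ in \eqref{f11-lim}: the sequence $(\tilde\al_k,q_k)$ equidistributes, so the Riemann sum converges to the integral. Crucially, the paper emphasizes that this argument ``establishes the limit \dots but it does not say anything about the rate of convergence.'' Your plan, by contrast, proposes to kill the nonzero Poisson modes by integration by parts, arguing that the irrationality makes the phase non-stationary and yields $o(1)$ decay. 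Mere irrationality does \emph{not} give a quantitative decay rate for the corresponding exponential sums; for that one would need a Diophantine hypothesis (e.g., badly approximable $\kappa x_0\cdot\vec\tau$), which is not assumed. So the final, decisive step in your proposal---estimating the nonzero modes---does not go through under the stated hypotheses, whereas the paper's qualitative equidistribution argument does. The rest of your plan (decomposition into smooth and jump parts, spatial rescaling near $x_0$, reduction to a half-plane indicator via the quadratic curvature correction, and the treatment of case (2) via the first-order vanishing of $\vec\Theta\cdot(x_0-y_*(\theta))$) is in the right spirit and broadly aligns with the paper's computation in Section~\ref{sec: dtb ex} and Section~\ref{sec: rem sing}.
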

The result in \cite{Katsevich2017a} is formulated without $w$. However, the generalization required to account for it is trivial. This is why the above theorem is stated with $w$ in \eqref{final-lim-v1}.

We can assume that the coordinates are selected so that $\vec\Theta_0:=(1,0)$. Suppose $\s$ is parametrized by $[-a,a]\ni\theta\to y_*(\theta)\in\s$. Here $y_*(\theta)$ is the point where the line $\{x\in\br^2:(x-y_*(\theta))\cdot\vec\Theta=0\}$ is tangent to $\s$. By assumption, $R(\theta)=\vec\Theta\cdot y_*''(\theta)>0$, $|\theta|\le a$, where $R(\theta)$ is the radius of curvature of $\s$ at $y_*(\theta)$. If $x_0\in\s$, we assume $x_0=y_*(0)$.

Let $H_\e(s)$, $s\in\br$, be a family of functions defined for all $\e>0$ sufficiently small, with the following properties
\begin{itemize}
\item[$H1$.] There exists $c$ such that $|\e^{-1}H_\e(s)|\le c$ for all $s\in\br$ and all $\e>0$ sufficiently small;
\item[$H2$.] $\e^{-1}H_\e(\e^{1/2}s)$ is uniformly Holder continuous with exponent $\ga$, $0<\ga\le 1$, i.e. 
\be\label{holder}
\sup_{s\in\br,h>0,\e>0}\frac{|H_\e(s+\e^{1/2}h)-H_\e(s)|}{\e h^\ga}<\infty.
\ee
\end{itemize}
It is convenient to introduce the normalized function
\be\label{H0-defs}
H_0(s):=\e^{-1}H_\e(\e^{1/2}s).
\ee
The dependence of $H_0$ on $\e$ is omitted from notation for simplicity. Define also
\be\label{main-fn}
f_\e(x)=\begin{cases}
\Delta f(x),& 0 < t<H_\e(\theta),\\
-\Delta f,& H_\e(\theta)<t < 0,\\
0,& \text{in all other cases},
\end{cases}
\ \Delta f(x):=f_+(x)-f_-(x),\ x=y_*(\theta)+t\vec\Theta.
\ee
As is easily seen, $f_\e^{mod}(x):=f(x)-f_\e(x)$ is a function, in which $\s$ is modified by $H_\e$, see Figure~\ref{fig:perturbation}. At the points where $H_\e(\theta)>0$, a small region is removed from $D_+$ and added to $D_-$. At the points where $H_\e(\theta)<0$, a small region is removed from $D_-$ and added to $D_+$. The magnitude of the perturbation is $O(\e)$. Let $\s_\e$ denote the perturbed boundary. Thus, $f_\e^{mod}(x)$ is discontinuous across $\s_\e$ instead of $\s$.

\begin{figure}[h]
{\centerline
{\epsfig{file=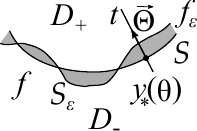, width=4.5cm}}
}
\caption{Illustration of the perturbation $\s\to\s_\e$ and the function $f_\e$, which is supported in the shaded region.}
\label{fig:perturbation}
\end{figure}

In Theorem~\ref{lem-phi-lim} we obtained the DTB in the case of a sufficiently smooth $\s$. By linearity, we can ignore the original function $f$ and consider the reconstruction of only the perturbation $f_\e$. 

Introduce the notation
\be\label{H-def}\begin{split}
\chi_H(t):=&\begin{cases}1,&0\le t\le H,\\
0,&t\not\in [0,H],\end{cases} \text{ if $H>0$, and } \chi_H(t):=\begin{cases}-1,&H\le t\le 0,\\
0,&t\not\in [H,0],\end{cases} \text{ if $H<0$}.
\end{split}
\ee
Let $\frec$ denote the reconstrution of only the perturbation $f_\e$ \eqref{main-fn}. The first result in this paper is as follows:

\begin{theorem}\label{main-res} Let (a) $f$ satisfy conditions f1--f3; (b) interpolation kernel $\ik$ satisfy conditions IK1, IK2; (c) perturbation $H_\e$ satisfy conditions H1, H2; and (d) detector aperture function $w$ satisfy $w\in C_0^2(\br)$ and $\int w(p)dp=1$. Suppose $x_0\in\s$ is generic, and let $\vec\dir_0$ be the positive unit normal to $\s$ at $x_0$.  If $\frec$ is the reconstruction of $f_\e$ from the data \eqref{data_eps} (with $f=f_\e$) using \eqref{recon-orig}, then
\be\label{final-lim-v2}
\lim_{\e\to0}\left[\frec(x_0+\e\hat x)- \Delta f(x_0)(\ik*w*\chi_{H_0(0)})(\vec\Theta_0\cdot\hat x)\right]=0,
\ee
where $\Delta f$ is the same as in \eqref{main-fn}. If $x_0\not\in\s$ is generic, then 
\be\label{final-lim far zone v2}
\lim_{\e\to0}\frec(x_0+\e\hat x)= 0.
\ee
\end{theorem}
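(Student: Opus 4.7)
I would begin by substituting $\hat f_\e(\alpha_k,p_j)$ into the reconstruction formula \eqref{recon-orig} with $f=f_\e$, carrying out the interpolation sum over $p_j$ first. Since both $\ik$ and $w$ act at scale $\e$, this produces an expression for $\frec(x_0+\e\hat x)$ in which the effective smoothing kernel is $\ik*w$ applied to the continuous Radon transform of $f_\e$, coupled with the Hilbert-type kernel $1/(p-\alpha_k\cdot x)$. Because $f_\e$ is supported in a tubular neighborhood of $\s$ of transverse thickness $O(\e)$, its Radon transform is non-negligible only on lines nearly tangent to $\s$. I would split the angular sum into a near-tangent part (angles whose tangent point $y_*(\theta_k)$ lies in a small neighborhood of $x_0$) and a complementary far part. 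The far part contributes the potential non-local artifacts; showing it vanishes is exactly what Section~\ref{sec: rem sing} is dedicated to, so I would defer it and concentrate on the near-tangent contribution.

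For the near-tangent analysis when $x_0\in\s$, I would pick coordinates so that $y_*(0)=x_0=0$ and $\vec\Theta_0=(1,0)$. The local expansion $\vec\Theta_0\cdot y_*(\theta)=\tfrac12 R(0)\theta^2+O(\theta^3)$ shows that lines nearly tangent to $\s$ pass within $O(\e)$ of $x_0$ only when $\theta=O(\e^{1/2})$. This motivates the rescalings $s=\theta/\e^{1/2}$ and a matching rescaling of the normal offset by $\e$; under $H1$--$H2$ we have $H_\e(\e^{1/2}s)=\e H_0(s)$, and the discrete angular grid of spacing $\kappa\e$ becomes a Riemann grid in $s$ of spacing $\kappa\e^{1/2}$. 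Computing the Radon transform of $f_\e$ along a line tangent to $\s$ at $y_*(\e^{1/2}s)$ (with a small normal offset) explicitly, using $\Delta f$ and the curvature factor $R(0)$, and then inserting it into the reduced formula, converts the near-tangent contribution into a Riemann sum in $s$ whose limit is an integral against the kernel $\ik*w$. The genericity condition~2 in Definition~\ref{def:gen pt} ($\kappa x_0\cdot\vec\tau$ irrational) is what ensures the discrete grid equidistributes against this kernel, as in~\cite{Katsevich2017a}.

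The central, and hardest, step is showing that the resulting $s$-integral collapses to the clean expression $\Delta f(x_0)(\ik*w*\chi_{H_0(0)})(\vec\Theta_0\cdot\hat x)$, in which only the value $H_0(0)$ at the tangent point survives. The Holder condition H2 is the essential tool: it quantifies $H_0(s)-H_0(0)$ on the relevant range of $s$ and, combined with cancellation in the principal-value kernel $1/(p-\alpha_k\cdot x)$ across symmetric ranges of $s$, shows that the $s$-dependent deviations of $H_0$ contribute only a vanishing error. This is the same mechanism that, if sharpened, produces the $O(\e^{\gamma/2})$ convergence rate mentioned in the Introduction; the proof here only establishes convergence, not a quantitative rate, because uniform control of the error term would require additional regularity of $H_0$ beyond what H2 supplies.

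Finally, for $x_0\notin\s$ generic, the absence of any tangent line to $\s$ through $x_0$ means that no near-tangent contribution arises in the sense above. Every line through $x_0+\e\hat x$ meeting $\s_\e$ does so transversally, and its Radon-transform value along $f_\e$ is uniformly $O(\e)$; combining the discrete $\Delta\al=O(\e)$ weight with the $1/\e$ scale in the filtering kernel, and invoking the far-part estimate of Section~\ref{sec: rem sing}, yields \eqref{final-lim far zone v2}. I expect the principal obstacle throughout to be the rigorous justification of the collapse to $H_0(0)$ in the near-tangent integral, since it is there that the Holder hypothesis, the Diophantine genericity, and the singular structure of the Hilbert kernel must be combined in a coordinated way.
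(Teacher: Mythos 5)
Your high-level decomposition (near-tangent vs.\ far angular ranges, the $\theta=\e^{1/2}s$ rescaling, equidistribution via the Diophantine genericity) tracks the paper's strategy, but the mechanism you give for the ``collapse to $H_0(0)$'' is not what actually happens and, taken literally, would fail. You claim that H2 ``quantifies'' $H_0(s)-H_0(0)$ on the relevant range of $s$ and that this smallness, combined with cancellation in the Hilbert kernel, makes the deviations vanish. But after the $\e^{1/2}$-rescaling the relevant range of $s$ is $O(1)$ or larger, and $H_0$ is itself an $\e$-dependent family (recall $H_0(s)=\e^{-1}H_\e(\e^{1/2}s)$); for fixed $s\ne0$ the quantity $H_0(s)-H_0(0)$ is $O(1)$ and in general has \emph{no limit} as $\e\to0$, so ``smallness of the deviation'' is not available. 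What actually forces the collapse in the paper is a stationary-phase/Fresnel concentration: passing to $u=\tilde\al+\tth$, $v=\tilde\al-\tth$ reduces the leading term to $\iint(\CH\ik'*w*\chi_{H_0(u)})\bigl(h+(R_0/2)uv\bigr)\,du\,dv$, and the $v$-integration of the quadratic phase---made rigorous by inserting the Gaussian regularizer $e^{-\e_1(u+v)^2/4}$ and taking Fourier transforms as in \eqref{I-v1-st3}--\eqref{I-v1-st4}---produces a Gaussian factor $\exp\bigl(-(\la R_0)^2u^2/(4\e_1)\bigr)$ that concentrates $u$ at $0$. That concentration selects $H_0(0)$ via dominated convergence using only boundedness of $H_0$ (property H1) and its continuity at $0$; the Holder exponent $\ga$ plays no role at this step. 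The hypothesis H2 is used elsewhere: it drives the decay estimates in Lemmas~\ref{lem:g1-props} and \ref{lem:Psi-asympt} (note $\de=\ga/(\ga+1)$), which justify absolute convergence and control the tails $f^{(2)}$ and $f^{(3)}$. You also omit the intermediate angular scale---$\Omega_1=\{|\al|\le A\e^{1/2}\}$ vs.\ $\Omega_2=[-a,a]\setminus\Omega_1$ with the double limit $\lim_{A\to\infty}\lim_{\e\to0}$---which is the structural device that lets the paper control the mismatch between the leading approximation $g_l$ and the exact integrand $g$.

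Your argument for $x_0\notin\s$ also starts from a false premise: lines through a generic $x_0\notin\s$ \emph{can} be tangent to $\s$; genericity only excludes tangency at inflection points of $\s$. Indeed Section~\ref{sec: rem sing} is set up precisely around such a tangent line, with $(x_0-y_*(0))\cdot\vec\Theta_0=0$ and $x_0\ne y_*(0)$, and the proof hinges on the first-order (as opposed to second-order, quadratic) vanishing of $\vec\Theta\cdot(x_0-y_*(\theta))$ at $\theta=0$, which feeds into the bound \eqref{parts_I-II-est}. The claim that every intersection is transversal, giving ``uniformly $O(\e)$'' Radon-transform values, misses the tangent directions and would not by itself yield the decay rate the paper needs.
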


Comparing and combining \eqref{final-lim-v1} and \eqref{final-lim-v2} we see that a non-smooth perturbation leads to the following two effects:
\begin{enumerate}
\item Local shifting of the reconstructed boundary $x_0\to x_0+H_\e(\theta_0)\vec\Theta_0$, 
and
\item The DTB retains its structure of the convolution of the ideal edge response (step function with the jump at $x_0+H_\e(\theta)\vec\Theta$) with $\ik*w$.
\end{enumerate} 

\section{Beginning of the proof of Theorem~\ref{main-res}}\label{sec:beg proof}

Pick a generic $x_0\in\s$. By linearity, in what follows we can consider only one domain $U\ni x_0$, and make the following assumptions: 
\begin{enumerate}
\item $\text{supp}(f)\subset U$,  
\item $\s$ is sufficiently short;
\item $f\equiv0$ in a neighborhood of the endpoints of $\s$.
\end{enumerate}
By assumption (3) above, $f(x)\equiv0$ in a neighborhood of $y_*(\pm a)$. Using assumption f2, that $x_0$ is generic, and $\s$ is sufficiently short (i.e., $0<a\ll 1$) we have:
\begin{enumerate}
\item $y_*(\theta)$ is a regular parametrization with $y_*\in C_b^4([-a,a])$ (i.e., bounded derivatives up to the fourth order);
\item $\s$ satisfies
\be\label{short-cond}
\frac{\max_{|\theta|\le a}|\vec\Theta\cdot y_*'''(\theta)|}{\min_{|\theta|\le a}R(\theta)} a \ll 1;
\ee
\item There exists $c>0$ such that 
\be\label{aldot-ineq}
\vec\Theta\cdot (x_0-y_*(\theta))\ge c\theta^2,\ |\theta|\le a; 
\ee
\item No line $\{x\in\br^2:\vec\al\cdot x=p\}$ is tangent to $\s$ if $a< |\al|\le \pi/2$. 
\end{enumerate}
Additional requirements on the smallness of $a$ will be formulated later as needed.

Pick some $A\gg1$ and define 
\be\label{two-sets}
\Omega_1:=\{\al:\,|\al|\le A\e^{1/2}\},\ \Omega_2:=[-a,a]\setminus\Omega_1,\ \Omega_3:=[-\pi/2,\pi/2]\setminus [-a,a].
\ee
Let $f^{(j)}$ denote the reconstruction obtained by the formula in \eqref{recon-orig} with $\al_k$ restricted to $\Omega_j$, $j=1,2,3$. The first term is split into two: $f^{(1)}=f_1^{(1)}+f_2^{(1)}$, where $f_1^{(1)}$ is the leading singular term of $f^{(1)}$. The term $f_1^{(1)}$ is defined following equation \eqref{gpm-fn-lt} below.

The result in Theorem~\ref{main-res} involves the limit of $\frec(x_0+\e\hat x)$ as $\e\to0$. Of the three functions $f^{(j)}$ that make up $\frec$, two depend on the new parameter $A\gg1$. The logic of our proof is based on computing the double limit $\lim_{A\to\infty}\lim_{\e\to0}f^{(j)}$, $j=1,2$. It is important that $A$ is fixed when the limit as $\e\to 0$ is computed.

\subsection{Estimation of $f_1^{(1)}$.}\label{ssec:f11}
Substitute \eqref{main-fn} into \eqref{data_eps}
\be\label{g-fn}\begin{split}
\hat f_\e(\al,p)=&\frac1\e\int_{-a}^a\int_{0}^{H_\e(\theta)} w\left(\frac{p-\vec\al\cdot (y_*(\theta)+t\vec\Theta\,)}\e\right) F(\theta,t) \dd t \dd\theta \\
=&\int_{-a}^a\int_0^{\e^{-1}H_\e(\theta)} w\left(\hat P-\vec\al\cdot \frac{y_*(\theta)-y_*(\al)}\e-\hat t\cos(\theta-\al)\,\right)F(\theta,\e\hat t) \dd\hat t \dd\theta,\\
F(\theta,t):=& \Delta f(y_*(\theta)+t\vec\Theta) (R(\theta)-t),\ \hat P:=\frac{p-\vec\al\cdot y_*(\al)}\e.
\end{split}
\ee
Here $R(\theta)-t=\text{det}(\dd y/\dd (\theta,t))$.
We assume that $\e$ is sufficiently small, and $R(\theta)-t>0$ on the domain of integration.
Denote
\be\label{two-defs}
\tilde\al:=\al/\e^{1/2},\ \auxang:=\theta-\al,\ \tth:=\auxang/\e^{1/2}.
\ee
Generally, throughout the paper a hat above a variable denotes rescaling of the original variable by a factor of $\e$, and a tilde above a variable denotes rescaling by a factor of $\e^{1/2}$. For example, $\hat p=p/\e$ and $\tilde\al=\al/\e^{1/2}$. Then
\be\label{g-fn-v2}\begin{split}
\hat f_\e(\al,p)=&\e^{1/2}\gc(\al,\hat P), \\
\gc(\al,\hat p)
:=&\int_{\tth_-}^{\tth_+}\int_0^{\e^{-1}H_\e(\al+\e^{1/2}\tth)} w\left(\hat p-\vec\al\cdot \frac{y_*(\al+\e^{1/2}\tth)-y_*(\al)}\e-\hat t\cos(\e^{1/2}\tth)\,\right)\\
&\hspace{1cm}\times F(\al+\e^{1/2}\tth,\e\hat t) \dd\hat t \dd\tth\\
=&\int_{\tth_-}^{\tth_+}\int_0^{H_0(\tilde\al+\tth)} w\left(\hat p-\left(\frac{\vec\al\cdot y_*''(\al)\tth^2}2+\frac{O(\e^{3/2}|\tth|^3)}\e\right)-\hat t\left(1+O(\e\tth^2)\right)\right)\\
&\hspace{1cm}\times \left(\Delta f(y_*(\al))R(\al)+O(\e^{1/2}|\tth|)+O(\e)\right) \dd\hat t \dd\tth,
\end{split}
\ee
where $\tth_{\pm}=(\pm a-\al)/\e^{1/2}$. In this subsection, $|\al|=O(\e^{1/2})$, so $R(\al)-R(0)=O(\e^{1/2})$, and we can replace $\vec\al\cdot y_*''(\al)=R(\al)$ with $R_0=R(0)$ in the argument of $w$. The corresponding error term is $O(\e^{1/2}\tth^2)$. A similar argument holds for the product $\Delta f(y_*(\al))R(\al)$, and the corresponding error term is $O(\e^{1/2})$. 

Let us look at the leading term of $\gc$, which is obtained by neglecting all the big-$O$ terms and extending the integral with respect to $\tth$ to all of $\br$: 
\be\label{gpm-fn-lt}
\gl(\al,\hat p):=\Delta f(x_0)R_0\int_\br\int_0^{H_0(\tilde\al+\tth)} w\left(\hat p-(R_0/2) \tth^2-\hat t\right) \dd\hat t \dd\tth,
\ee
where $\tilde\al$ and $H_0$ are the same as in \eqref{two-defs} and \eqref{H0-defs}, respectively. 
The subscript `$l$' signifies that $\gl$ is the leading term of $\gc$. By definition, substitution of the resulting approximation $\hat f_\e(\al,p)\approx \e^{1/2}g_l(\al,\hat P)$ into \eqref{recon-orig} (with $\hat P$ as in \eqref{g-fn}) and restricting the sum to $\al_k\in\Omega_1$ gives $f_1^{(1)}$.

\begin{lemma}\label{lem:g1-props} Fix any $\de$, $0<\de<1/2$. For some $c$ and any $\al\in(-a,a)$ one has
\be\label{left-supp-pm}
g_*(\al,\hat p)\equiv 0 \text{ for }\hat p<c,
\ee 
\be\label{g-as-st2-pm}
g_*(\al,\hat p)=O(\hat p^{-1/2}),\ \hat p\to+\infty,
\ee
and
\be\label{delg-bnd-st2-pm}\begin{split}
&g_*(\al,\hat p+\Delta\hat p)-g_*(\al,\hat p)=
O\left(|\Delta\hat p|^\ga \hat p^{-(1+\ga)/2}\right) \text{ if }
|\Delta\hat p|=O(\hat p^{\de}),\, \hat p\to+\infty.
\end{split}
\ee
The above assertions hold for both $g_*=\gc$ and $g_*=\gl$. The big-$O$ terms in \eqref{g-as-st2-pm} and \eqref{delg-bnd-st2-pm} are uniform with respect to $\al\in (-a,a)$ and $\e>0$ sufficiently small.
\end{lemma}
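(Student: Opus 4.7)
I will first establish all three estimates for the leading term $\gl$ defined by \eqref{gpm-fn-lt}, where the double integral has a clean structure; the estimates for $\gc$ then follow because the perturbative terms in \eqref{g-fn-v2} (the phase correction $O(\e^{1/2}|\tth|^3)$, the amplitude correction $O(\e^{1/2}|\tth|)+O(\e)$, and the replacement of $\int_{\br}$ by $\int_{\tth_-}^{\tth_+}$) are subleading in the active regime $|\tth|\asymp\sqrt{\hat p}$. The key input for \eqref{delg-bnd-st2-pm} is that $H_0$, as defined in \eqref{H0-defs}, is uniformly bounded and uniformly $\ga$-Hölder continuous on $\br$ with constants independent of $\e$, which is a direct consequence of H1 and H2.

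For \eqref{left-supp-pm}: with $\mathrm{supp}\,w\subset[-L,L]$ and $|\hat t|\le\|H_0\|_\infty$, the argument of $w$ in \eqref{gpm-fn-lt} is bounded above by $\hat p+\|H_0\|_\infty$, which is less than $-L$ as soon as $\hat p<-L-\|H_0\|_\infty$, so the integrand vanishes identically. For \eqref{g-as-st2-pm}, I split the $\tth$-integration into the two branches $\tth\gtrless 0$ and substitute $u=(R_0/2)\tth^2$ on each, obtaining
\be\label{gl-u-form}
\gl(\al,\hat p)=\Delta f(x_0)\sqrt{R_0/2}\sum_{\sigma=\pm1}\int_0^\infty\frac{du}{\sqrt u}\int_0^{H_0(\tilde\al+\sigma\sqrt{2u/R_0})}w(\hat p-u-\hat t)\,d\hat t.
\ee
The integrand is nonzero only for $u\in[\hat p-\hat t-L,\hat p-\hat t+L]$, so for $\hat p\to+\infty$ one has $u\asymp\hat p$ and $1/\sqrt u\asymp 1/\sqrt{\hat p}$; combined with the bounded $u$- and $\hat t$-ranges, this yields \eqref{g-as-st2-pm} uniformly in $\al\in(-a,a)$.

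For \eqref{delg-bnd-st2-pm}: starting from \eqref{gl-u-form} for $\gl(\al,\hat p+\Delta\hat p)$, I substitute $u\mapsto u+\Delta\hat p$, which removes $\Delta\hat p$ from the argument of $w$ but shifts the argument of $H_0$ by $\sqrt{2(u+\Delta\hat p)/R_0}-\sqrt{2u/R_0}=O(|\Delta\hat p|/\sqrt u)=O(|\Delta\hat p|/\sqrt{\hat p})$ on the active support. After subtracting $\gl(\al,\hat p)$, the leading contribution comes from the variation of $H_0$, which by uniform Hölder continuity is $O((|\Delta\hat p|/\sqrt{\hat p})^\ga)=O(|\Delta\hat p|^\ga\hat p^{-\ga/2})$; multiplying by the $1/\sqrt{\hat p}$ factor from the $u$-measure (and the bounded $u$-range) gives \eqref{delg-bnd-st2-pm}. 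The hypothesis $|\Delta\hat p|=O(\hat p^\de)$ with $\de<1/2$ ensures $|\Delta\hat p|\ll\hat p$, so the linearization is valid, and the residual contributions from the Jacobian difference $1/\sqrt{u+\Delta\hat p}-1/\sqrt u=O(|\Delta\hat p|\hat p^{-3/2})$ and from the change of integration endpoints are strictly of lower order (using $\de<1/2$).

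The main obstacle I anticipate is the transfer of \eqref{delg-bnd-st2-pm} from $\gl$ to $\gc$: one must verify that each perturbative term in \eqref{g-fn-v2}, evaluated on the $\tth$-support of width $O(\hat p^{-1/2})$ near $\pm\sqrt{2\hat p/R_0}$, contributes no worse than $O(|\Delta\hat p|^\ga\hat p^{-(1+\ga)/2})$. I expect this to work by absorbing the phase perturbations into a refined $u$-substitution and treating the amplitude perturbations as bounded multiplicative corrections that preserve all three estimates uniformly in $\al$ and in $\e$ sufficiently small.
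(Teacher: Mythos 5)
Your approach is essentially the paper's: both rely on the change of variable from $\tth$ to the quadratic phase (your $u=(R_0/2)\tth^2$, the paper's $\hat s=\tth^2\psi(\e^{1/2}\tth)$) together with the uniform Hölder continuity of $H_0$ inherited from H1--H2, which produces exactly the $\hat p^{-1/2}\cdot(|\Delta\hat p|/\hat p^{1/2})^\ga$ bound. The only difference is ordering: the paper proves the harder case $g_*=\gc$ directly by substituting $\hat s=\tth^2\psi(\e^{1/2}\tth)$ (with $\psi\in C^2$ bounded above and below, so $\tth_\e'(\hat s)=O(\hat s^{-1/2})$ with uniform constants), then remarks that $\gl$ is a simpler special case; you prove $\gl$ first and only sketch the transfer, but the ``refined $u$-substitution'' you anticipate is precisely the paper's $\hat s$-substitution, and the bounded multiplicative amplitude and $\cos(\e^{1/2}\tth)$ corrections do behave as you expect, so the sketched step is sound.
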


A proof of the lemma is in Appendix~\ref{sec:lemI}. 
Substitute the data \eqref{data_eps} into the inversion formula \eqref{recon-orig} and use \eqref{g-fn}, \eqref{g-fn-v2}:
\be\label{ker-der}
\begin{split}
\frac1\pi\sum_{j} \int \frac{\pa_p\ik\left(\frac{p-p_j}\e\right)}{p-\al\cdot x}dp \hat f_\e(\al,p_j)
&=\e^{-1/2}\sum_{j} (\CH\ik')\left(\frac{\al\cdot x}\e-j\right)g\left(\al,j-\frac{\al\cdot y}{\e}\right),
\end{split}
\ee
where $\CH$ is the Hilbert transform. In this subsection we approximate $g\approx g_l$ and introduce:
\be\label{frst-sub}\begin{split}
\Psil(\tilde\al,\hat p,q):=&\sum_j (\CH\ik')\left(\hat p-j\right)\gl(\al,j-q).
\end{split}
\ee
Recall that we continue using the convention that $\al$ and $\tilde\al$ are related as in \eqref{two-defs}.


Even though $\Psil$ depends on $\e$ via $H_0$, this dependence is omitted from notation for simplicity. All the properties of $\Psil$ to be established below are uniform with respect to $\e$ and $\tilde\al$. Clearly, $\Psil(\tilde\al,\hat p-n,q-n)=\Psil(\tilde\al,\hat p,q)$ for any $n\in\mathbb Z$. 

We need the asymptotics of $\Psil(\tilde\al,\hat p,q)$ as $\hat p-q\to\infty$. By the invariance of $\Psil$ with respect to integer shifts, we can assume that $q$ is confined to a bounded set, e.g. $q\in[0,1)$, and $\hat p\to\infty$. The following result is proven in Appendix~\ref{sec:prf-lemPsi}.

\begin{lemma}\label{lem:Psi-asympt} One has 
\be\label{Psi-asymp}\begin{split}
\Psil(\tilde\al,\hat p,q)=\begin{cases} O(|\hat p|^{-3/2}),& \hat p\to-\infty,\\ 
O(\hat p^{-(1+\de)/2}),& \hat p\to+\infty,\end{cases} \
\de=\frac{\ga}{\ga+1},\ |\tilde\al| < \e^{-1/2}a,\ q\in[0,1),
\end{split}
\ee
where the big-$O$ terms are uniform with respect to $\tilde\al,q$ in the indicated sets and $\e>0$ sufficiently small. 
\end{lemma}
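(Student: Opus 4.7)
The plan is to combine two key ingredients: an exact cancellation identity for summed shifts of $\CH\ik'$, and the pointwise decay $(\CH\ik')(t)=O(t^{-2})$. First, IK2 gives $\sum_j\ik(u-j)\equiv 1$, which by Poisson summation forces $\hat\ik(k)=\de_{k,0}$ for all integer $k$; combined with $\widehat{\CH\ik'}(\xi)=2\pi|\xi|\hat\ik(\xi)$, every Fourier coefficient of the periodization of $\CH\ik'$ vanishes, so
\be
\sum_{j\in\mathbb Z}(\CH\ik')(u-j)\equiv 0.
\ee
Second, $\int\ik'=0$ and $\int s\ik'(s)\,ds=-1$ yield $(\CH\ik')(t)=-1/(\pi t^2)+O(t^{-4})$, hence $|(\CH\ik')(t)|\le C(1+t^2)^{-1}$ globally.

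For $\hat p\to-\infty$, \eqref{left-supp-pm} restricts the sum to $j\ge q+c$, and all such $j$ lie at distance $\ge|\hat p|-O(1)$ from $\hat p$. Splitting $j\le|\hat p|/2$ versus $j>|\hat p|/2$ and using $|(\CH\ik')(\hat p-j)|\le C(j-\hat p)^{-2}$ together with $|g_l(\al,j-q)|=O((j-q+1)^{-1/2})$, a direct sum-to-integral bound gives $O(|\hat p|^{-3/2})$, matching the first line of \eqref{Psi-asymp}.

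For $\hat p\to+\infty$, I would fix a scale $M=c\hat p^{\de'}$ with $\de'<1/2$ (so \eqref{delg-bnd-st2-pm} applies on $|j-\hat p|\le M$) and split the sum at $|j-\hat p|=M$. The far part is controlled exactly as in the negative case and contributes $O(\hat p^{-1/2}/M+\hat p^{-3/2})$. For the near part I would decompose $g_l(\al,j-q)=g_l(\al,\hat p-q)+[g_l(\al,j-q)-g_l(\al,\hat p-q)]$: the constant piece, by the cancellation identity, equals $-g_l(\al,\hat p-q)$ times the far tail of $\CH\ik'$ (bounded by $O(M^{-1})$), yielding $O(\hat p^{-1/2}/M)$; the remainder piece is dominated by $C\hat p^{-(1+\ga)/2}\sum_{|j-\hat p|\le M}|(\CH\ik')(\hat p-j)|\,|j-\hat p|^\ga$, in which the sum converges absolutely thanks to $|(\CH\ik')(t)|\,|t|^\ga=O(|t|^{\ga-2})$ and $\ga<1$. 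Balancing $M$ yields the claimed exponent $(1+\de)/2$ with $\de=\ga/(\ga+1)$.

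The hard part will be the cancellation identity. Without it, the constant piece has absolute size only $O(\hat p^{-1/2})$, which is far from the target; the identity is what converts a potentially large near-diagonal sum into a controlled tail of size $O(M^{-1})$. A secondary technical point is that $\CH\ik'\in L^\infty$ (since $\ik\in C_0^2$ forces $\ik'(\pm R)=0$, removing any would-be logarithmic singularity), which ensures the near-diagonal terms in the Holder bound remain bounded. Uniformity in $\tilde\al$, $q\in[0,1)$, and small $\e$ is inherited from the uniform constants supplied by Lemma~\ref{lem:g1-props}.
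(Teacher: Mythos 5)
Your proposal is structurally the same as the paper's argument: handle $\hat p\to-\infty$ directly from the support and decay of $g_l$; for $\hat p\to+\infty$ split the sum at a window of width $M$, bound the far tail, and on the near-diagonal block decompose $g_l(\al,j-q)$ into $g_l(\al,\hat p-q)$ plus a H\"older remainder, using a cancellation coming from IK2 for the constant piece and Lemma~\ref{lem:g1-props} for the remainder. Your conclusion is correct, but there are two points worth flagging.

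First, your estimate of the two near-diagonal pieces differs from the paper's in opposite directions, which is harmless but affects the final exponent you should report. For the remainder (the paper's $S_1$) you keep $|j-\hat p|^\ga$ inside the sum and observe that $\sum_j|(\CH\ik')(\hat p-j)|\,|j-\hat p|^\ga=O(1)$ for $\ga<1$; this gives $S_1=O(\hat p^{-(1+\ga)/2})$, which is \emph{sharper} than the paper's bound $O(\hat p^{\ga(\de-1/2)-1/2})$ obtained by pulling out $M^\ga$. For the constant piece (the paper's $S_2$) you only use $|(\CH\ik')(t)|=O(t^{-2})$ after applying the cancellation identity, getting $O(\hat p^{-1/2}M^{-1})$, whereas the paper exploits the additional cancellation $\int\ik'=0$ (via \eqref{ker-pr}) to obtain $O(\hat p^{-1/2}M^{-2})$. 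Either estimate on $S_2$ suffices here, since $S_2$ is not the bottleneck. The upshot is that your final statement ``balancing $M$ yields $\de=\ga/(\ga+1)$'' is not what your own estimates give: with $S_1=O(\hat p^{-(1+\ga)/2})$ (independent of $M$) and far-part plus constant piece of size $O(\hat p^{-1/2}/M)$, the optimal choice $M\sim\hat p^{\ga/2}$ (which respects the constraint $M=O(\hat p^{\de'})$, $\de'<1/2$, since $\ga<1$) yields the sharper rate $O(\hat p^{-(1+\ga)/2})$, i.e. the exponent in the lemma with $\de=\ga$ rather than $\de=\ga/(\ga+1)$. This still proves the lemma, indeed strengthens it, but you should state the exponent you actually obtain rather than asserting you match the paper's.

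Second, the derivation of the identity $\sum_j(\CH\ik')(u-j)\equiv 0$ via Poisson summation is too informal as written: $\widehat{\CH\ik'}(\xi)\sim|\xi|\,\hat\ik(\xi)=O(|\xi|^{-1})$ is not absolutely summable over the dual lattice, so the classical Poisson summation hypotheses fail. A clean justification is to truncate, $\Psi_N(u):=\sum_{|j|\le N}\ik'(u-j)$, note that by IK2 the function $\Psi_N$ is supported in two $O(1)$-windows near $\pm N$ each with $\int\Psi_N=0$, hence $\CH\Psi_N(u)=O(N^{-2})$ for fixed $u$, and then let $N\to\infty$ using absolute convergence of $\sum_j(\CH\ik')(u-j)$; this is the same mechanism the paper uses locally in \eqref{ker-pr}--\eqref{S2est}.
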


From \eqref{g-fn}, \eqref{g-fn-v2}, \eqref{gpm-fn-lt}, \eqref{ker-der}, and \eqref{frst-sub}, the leading term of the reconstruction is given by
\be\label{recon-1}
\begin{split}
f_1^{(1)}(x_0+\e\hat x)=&-\frac1{2\pi}\frac{\Delta\al}{\e^{1/2}}\sum_{\al_k\in\Omega_1} \Psil\left(\frac{\al_k}{\e^{1/2}},\frac{\vec\al_k\cdot x}\e,\frac{\vec\al_k\cdot y_*(\al_k)}\e\right)\\
=&-\frac{\kappa\e^{1/2}}{2\pi} \sum_{\al_k \in\Omega_1} \Psil\left(\frac{\al_k}{\e^{1/2}},\vec\al_k\cdot \hat x+\frac{\vec\al_k\cdot (x_0-y_*(\al_k))}\e+q_k,q_k\right),\\
q_k:=&\left\{\frac{\vec\al_k\cdot y_*(\al_k)}\e\right\}.
\end{split}
\ee
Here $\{r\}:=r-\lfloor r\rfloor$ denotes the fractional part of $r$, and $\lfloor r\rfloor$ is the floor function, i.e. the largest integer not exceeding $r$.

\begin{lemma}\label{lem:Psi-cont} One has:
\be\begin{split}\label{Psi-cont}
\Psil(\tilde\al+\Delta \tilde\al,\hat p,q)-\Psil(\tilde\al,\hat p,q)=&O(|\Delta \tilde\al|^{\ga}),\ \Delta \tilde\al\to0,\\\Psil(\tilde\al,\hat p+\Delta \hat p,q)-\Psil(\tilde\al,\hat p,q)=&O(|\Delta \hat p|^\mu),\ \Delta \hat p\to0,\\
\Psil(\tilde\al,\hat p,q+\Delta q)-\Psil(\tilde\al,\hat p,q)=&O(|\Delta q|),\ \Delta q\to0,
\end{split}
\ee
for any $\mu<1$. The big-$O$ terms are uniform with respect to $\tilde\al\in \e^{-1/2}(-a,a)$, $\hat p$ and $q$ confined to any bounded set, and $\e>0$ sufficiently small.
\end{lemma}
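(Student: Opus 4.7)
The plan is to exploit, for each of the three variables, the interplay between the smoothness of one of the two factors in $\Psi_l = \sum_j (\mathcal H\varphi')(\hat p-j)\,g_l(\al,j-q)$ and the decay of the other. Two uniform ingredients will be used throughout: the pointwise bound $|g_l(\al,\hat p)|\le C\min(1,\hat p_+^{-1/2})$ (with $g_l\equiv0$ for $\hat p<c$) from Lemma~\ref{lem:g1-props}, and the asymptotics $\mathcal H\varphi'(z)=-1/(\pi z^2)+O(z^{-4})$ as $|z|\to\infty$ (using that $\varphi'$ is odd, compactly supported, and $\int t\varphi'(t)\,dt=-1$ by IK2).

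\emph{Step 1 (continuity in $\tilde\al$).} Only $g_l$ depends on $\tilde\al$, and only through $H_0(\tilde\al+\tth)$. Inserting condition H2 into \eqref{gpm-fn-lt} shows
\[
|g_l(\al+\Delta\al,\hat p)-g_l(\al,\hat p)|\le C|\Delta\tilde\al|^\ga\int_\br|w(\hat p-(R_0/2)\tth^2-\xi(\tth))|\,\dd\tth
\]
for some $\xi(\tth)$ between the two values of $H_0$. Arguing as in the proof of \eqref{g-as-st2-pm}, the compact support of $w$ forces $\tth$ into an interval of width $O(\hat p^{-1/2})$ (for large $\hat p$), so the right side is $O(|\Delta\tilde\al|^\ga\min(1,\hat p_+^{-1/2}))$. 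Combined with $|\mathcal H\varphi'(\hat p-j)|=O(\min(1,(\hat p-j)^{-2}))$, the resulting sum is uniformly convergent and gives the first estimate.

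\emph{Step 3 (Lipschitz in $q$).} Differentiating under the integral in \eqref{gpm-fn-lt} (valid since $w\in C_0^2$) and integrating by parts in $\hat t$ yields
\[
\pa_{\hat p}g_l(\al,\hat p)=\Delta f(x_0)R_0\int_\br\bigl[w(\hat p-R_0\tth^2/2)-w(\hat p-R_0\tth^2/2-H_0(\tilde\al+\tth))\bigr]\dd\tth,
\]
which is continuous and satisfies $|\pa_{\hat p}g_l(\al,\hat p)|\le C\min(1,\hat p_+^{-1/2})$ by the same stationary-support argument. The mean value theorem then gives $g_l(\al,j-q-\Delta q)-g_l(\al,j-q)=-\Delta q\,\pa_{\hat p}g_l(\al,j-q-s_j^*)$ with $s_j^*\in[0,\Delta q]$, and the resulting sum is uniformly bounded by the argument of Step~1.

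\emph{Step 2 (the delicate case, continuity in $\hat p$).} This is the hard step, since $\mathcal H\varphi'$ is \emph{not} Lipschitz: $\varphi'$ is Lipschitz with compact support, so $\mathcal H\varphi'\in C^{0,\mu}$ only for each $\mu<1$ (Zygmund-Riesz), giving $|\mathcal H\varphi'(z+\Delta\hat p)-\mathcal H\varphi'(z)|\le C_\mu|\Delta\hat p|^\mu$. Away from the support of $\varphi'$, however, the asymptotics above give $|(\mathcal H\varphi')'(z)|=O(|z|^{-3})$, hence $|\mathcal H\varphi'(z+\Delta\hat p)-\mathcal H\varphi'(z)|=O(|\Delta\hat p|\,|z|^{-3})$ for large $|z|$. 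The plan is to split $\sum_j$ into $|\hat p-j|\le M$ and $|\hat p-j|>M$. Using the Holder bound with $|g_l|\le C\min(1,(j-q)^{-1/2})$, the first part is $O(|\Delta\hat p|^\mu M^{1/2})$; using the derivative decay, the second is $O(|\Delta\hat p|\,M^{-2})$. Balancing gives $M\sim|\Delta\hat p|^{2(1-\mu)/5}$, hence a total $O(|\Delta\hat p|^{(4\mu+1)/5})$. Since $(4\mu+1)/5\to 1$ as $\mu\to 1$, one can achieve $O(|\Delta\hat p|^{\tilde\mu})$ for any $\tilde\mu<1$, which is exactly the lemma's claim.

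\emph{Main obstacle.} Step~2 is the only nontrivial one: the failure of Lipschitz regularity of $\mathcal H\varphi'$ means one cannot simply put a derivative on $\varphi^*$ as in Step~3, and the compensation must be extracted from the improved decay of $(\mathcal H\varphi')'$ at infinity. Uniformity in $\tilde\al\in\e^{-1/2}(-a,a)$ and in $\e$ follows throughout because H2 is uniform in $\e$ and the bounds on $g_l$ from Lemma~\ref{lem:g1-props} are $\e$-independent.
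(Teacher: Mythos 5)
Your proof is correct and, for the first and third identities, matches the paper's argument essentially verbatim (H\"older continuity of $H_0$ for the $\tilde\al$-variable, differentiability of $g_l$ in its second argument via the compactly supported $w$ for the $q$-variable, both combined with the $O(t^{-2})$ decay of $\CH\ik'$). For the second identity you use exactly the same two ingredients as the paper — global $C^{0,\mu}$ regularity of $\CH\ik'$ for every $\mu<1$ (Zygmund) and a Lipschitz-with-polynomial-decay bound away from the support of $\ik$ — but assemble them via a free splitting parameter $M$ that you then optimize, yielding exponent $(4\mu+1)/5$ and a passage to the limit $\mu\to1$. This is a detour: since $\CH\ik'$ is smooth away from $\text{supp}(\ik)$, the $|\Delta\hat p|^\mu$ loss is only incurred on the \emph{fixed} bounded set $|\hat p-j|\le c_2$ (finitely many $j$, since $\hat p$ is confined to a compact set), while the tail already admits the bound $|\Delta\hat p|/(1+(\hat p-j)^2)$; summing against $|g_l|=O(1)$ then gives $O(|\Delta\hat p|^\mu)+O(|\Delta\hat p|)=O(|\Delta\hat p|^\mu)$ directly, which is the paper's \eqref{hphi} route. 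Your version reaches the same conclusion, just with an unnecessary optimization step.
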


\begin{proof} The first line follows from \eqref{holder}, \eqref{gpm-fn-lt}, \eqref{frst-sub}, and the fact that $\CH\ik'(t)=O(t^{-2})$, $t\to\infty$. 

To prove the second line, note that the pseudo-differential operator $\CH \pa/\pa t\in S_{1,0}^1(\br\times\br)$ and $\ik\in C_0^2(\br)$, so $\CH \ik'\in C_*^1(\br)$, where $C_*^s(\br)$, $s>0$, denotes the Holder-Zygmund space (see item 2 in Remark 6.4 and Theorem 6.19 in \cite{Abels12}). Since $C_*^1(\br)\subset C_*^\mu(\br)$ for any $\mu\in(0,1)$, and the latter space consists of functions that are H{o}lder continuous with exponent $\mu$ (e.g., see Theorem 6.1 in \cite{Abels12}), we get that
\be\label{hphi}
|\CH\ik'(t+\Delta t)-\CH\ik'(t)|\le c_1\begin{cases} |\Delta t|/(1+t^2),& |t|\ge c_2,\\
|\Delta t|^{\mu},\ |t|\le c_2,\end{cases}
\ee
for some $c_{1,2}>0$ and all $|\Delta t|$ sufficiently small. The desired assertion now follows.

The third line follows by replacing $q$ with $q+\Delta q$ in \eqref{frst-sub} and then in \eqref{gpm-fn-lt}, 
subtracting $\Psil(\tilde\al,\hat p,q)$ from $\Psil(\tilde\al,\hat p,q+\Delta q)$, and then using that $w$ is compactly supported. 
\end{proof}

By \eqref{recon-1} and the second line in Lemma~\ref{lem:Psi-cont},
\be\label{partI-two}
f_1^{(1)}(x_0+\e\hat x)
=-\frac{\kappa\e^{1/2}}{2\pi} \sum_{\al_k \in\Omega_1} \Psil\left(\frac{\al_k}{\e^{1/2}},\vec\Theta_0\cdot \hat x+\frac{R_0}2 \frac{\al_k^2}\e+q_k,q_k\right)+O(\e^{\mu/2}).
\ee
We can use Lemma~\ref{lem:Psi-cont}, because the arguments of $\Psi_l$ remain bounded when $\al_k \in\Omega_1$. With $\tilde\al_k:=\al_k/\e^{1/2}$, we have $\Delta \tilde\al=\kappa\e^{1/2}$. Using that $x_0$ is generic and following the same approach as in \cite{Katsevich2017a} leads to (Condition (2) is essential in this step) 
\be\label{f11-lim}
\lim_{\e\to0}\left(f_1^{(1)}(x_0+\e\hat x)
+\frac1{2\pi} \int_{|\tilde\al|\le A}\int_0^1 \Psil\left(\tilde\al,\vec\Theta_0\cdot \hat x+\frac{R_0}2 \tilde\al^2+q,q\right)\dd q \dd\tilde\al\right)=0.
\ee
The double integral in \eqref{f11-lim} does not necessarily have a limit as $\e\to0$, since the dependence of $H_0$ and $\Psi_l$ on $\e$ can be complicated.

Note that the argument in this subsection establishes the limit in \eqref{f11-lim}, but it does not say anything about the rate of convergence.

\subsection{Estimation of $f_2^{(1)}$.}
Define (cf. \eqref{g-fn-v2} and \eqref{gpm-fn-lt}): 
\be\label{delg-fn}
\Delta g(\al,\hat p):=\gc(\al,\hat p)-\gl(\al,\hat p).
\ee
Note that $\gl$ (and, therefore, $\Delta g$) is not compactly supported in $\hat p$, even though $\gc$ is compactly supported. However, inserting in \eqref{gpm-fn-lt} the same limits $\tth_{\pm}$ as in \eqref{g-fn-v2}, introduces only a small error: 
\be\label{delgl}
\gl(\al,\hat p)-\Delta f(x_0)R_0\int_{\tth_-}^{\tth_+}\int_0^{H_0(\tilde\al+\tth)} w\left(\hat p-(R_0/2) \tth^2-\hat t\right) \dd\hat t \dd\tth=O(\e^{1/2}).
\ee
The big-$O$ term in \eqref{delgl} is uniform with respect to $\al$ in compact subsets of $(-a,a)$. The latter condition ensures that $\tth_{\pm}=O(\e^{-1/2})$. In this subsection we assume that $\al\in\Omega_1$,  so the estimate \eqref{delgl} is indeed uniform. Since $w\in C_0^2(\br)$ and $F\in C^2([-a,a]\times[-\de,\de])$ for some $\de>0$, we have from \eqref{g-fn-v2}, the paragraph following \eqref{g-fn-v2}, and \eqref{delgl}
\be\label{delg-fn-bnd}\begin{split}
\left|\Delta g(\al,\hat p)\right|
&\le  c\begin{cases}
|\hat p|^{-1/2}\left[\e^{1/2}|\hat p|+ \e^{-1}(\e|\hat p|)^{3/2}\right],& c\le|\hat p|\le c/\e\\
\e^{1/2},& |\hat p|\le c \text{ or }|\hat p|\ge c/\e
\end{cases}\\
&\le c\e^{1/2}(|\hat p|+1)
\end{split}
\ee
for some $c>0$, which may have different values in different places. Here we have used that in \eqref{g-fn-v2}, $c_1\le \tilde\nu^2/|\hat p|\le c_2$ if $|\hat p|\ge c_3$ for some $c_{1,2,3}>0$.

Define similarly to \eqref{frst-sub}:
\be\label{frst-sub-del}
\Delta \Psi(\tilde\al,\hat p,q):=\sum_j (\CH\ik')\left(\hat p-j\right)\Delta g(\al,j-q).
\ee
Substitute \eqref{delg-fn-bnd} into \eqref{frst-sub-del}:
\be\label{del-psi1-est}
\begin{split}
|\Delta \Psi(\tilde\al,\hat p,q)|&\le \sum_{|j|\ge O(\e^{-1})} \frac{O(\e^{1/2})}{1+(\hat p-j)^2}+
\sum_{|j|\le O(\e^{-1})} \frac{O(\e^{1/2})(|j|+1)}{1+(\hat p-j)^2}\\
&=O(\e^{1/2}\ln(1/\e)),
\end{split}
\ee
assuming that $\hat p$ in \eqref{del-psi1-est} is confined to a bounded set, e.g. $|\hat p|\le c$. Here $c>0$ can be any fixed number. The restriction on $\hat p$ is justified, since $\al_k\in\Omega_1$. As is seen from \eqref{two-sets}, \eqref{recon-1}, and \eqref{partI-two}, $|\hat p|\lesssim (R_0/2) A^2$ when $A\gg 1$.
Clearly, the estimate \eqref{del-psi1-est} is uniform with respect to $\hat p\in [-c,c]$, $\al\in\Omega_1$ (i.e., $|\tilde\al|\le A$), $q\in [0,1)$, and all $\e>0$ sufficiently small. Recall that $A\gg1$ is fixed when computing the limit as $\e\to0$.
Summing over all $\al_k\in\Omega_1$ similarly to \eqref{recon-1}, yields
\be\label{recon-21}
f_2^{(1)}(x_0+\e\hat x)=\e^{1/2}\sum_{|\al_k|\in \Omega_1} O(\e^{1/2}\ln(1/\e))=O(\e^{1/2}\ln(1/\e)).
\ee

\subsection{Estimation of $f^{(2)}$.}
From \eqref{g-fn-v2}, we get similarly to \eqref{recon-1}:
\be\label{partII-est-v2}\begin{split}
f^{(2)}(x_0+\e \hat x)=&-\e^{1/2}\frac{\kappa}{2\pi} \sum_{\al_k \in\Omega_2}\Psi\left(\al_k,\vec\al_k\cdot \hat x+\frac{\vec\al_k\cdot (x_0-y_*(\al_k))}\e+q_k,q_k\right),\\
\Psi(\al,\hat p,q):=&\sum_j (\CH\ik')\left(\hat p-j\right)\gc(\al,j-q).
\end{split}
\ee
By Lemma~\ref{lem:g1-props}, $\Psi$ also satisfies \eqref{Psi-asymp}.
By \eqref{aldot-ineq},
\be\label{partII-est}\begin{split}
|f^{(2)}(x_0+\e \hat x)|\le&c_1\e^{1/2} \sum_{\al_k \in\Omega_2} \left|\Psi\left(\al_k,\vec\al_k\cdot \hat x+\frac{\vec\al_k\cdot (x_0-y_*(\al_k))}\e+q_k,q_k\right)\right|\\
\le &c_2 \e^{1/2} \sum_{k\ge A/\e^{1/2}}(k^2\e)^{-(1+\de)/2}
\le c_3\int_A^\infty x^{-(1+\de)} \dd x
= O(A^{-\de})
\end{split}
\ee
for some $c_{1,2,3}$. 
Consequently,
\be\label{partII-lim-final}
\lim_{A\to\infty}\lim_{\e\to0}f^{(2)}(x_0+\e \hat x)=0.
\ee

\subsection{Estimation of $f^{(3)}$.}\label{ssec:estf3}
Consider now the third term $f^{(3)}$. By construction, $\al\in\Omega_3$ implies that $\al\cdot y_*'(\theta)\not=0$, $|\theta|\le a$. Hence, we can express $\theta$ in terms of $s$ by solving $s=\al\cdot y_*(\theta)$. Suppose, for example, that $\al\cdot y_*'(\theta)>0$. The case when this expression is negative is completely analogous. From the first line in \eqref{g-fn} we find
\be\label{g-fn-sigma3}\begin{split}
\hat f_\e(\al,\e \hat p)=&\int_{\al\cdot y_*(-a)}^{\al\cdot y_*(a)}\int_0^{\e^{-1}H_\e(\theta(s))} w\left(\hat p-\frac s\e -\hat t\cos(\theta(s)-\al)\,\right)
F(\theta(s),\e\hat t) \dd\hat t\, \theta'(s)\dd s\\
=&\e\int_\br\int_0^{\e^{-1}H_\e(\theta(\e\hat s))} w\left(\hat p-\hat s-\hat t\cos(\theta(\e\hat s)-\al)\,\right)\dd\hat t\, F_1(\e\hat s)\dd\hat s+O(\e^2),\\
F_1(s):=&F(\theta(s),0)\theta'(s).
\end{split}
\ee

Strictly speaking, $\dd s/\dd\theta$ can approach zero (i.e., $\theta'(s)\to\infty$) when $\al\to a^+$ and $\theta\to a^-$ or $\al\to -a^-$ and $\theta\to -a^+$. However, $f(x)\equiv0$ in a neighborhood of $y_*(\pm a)$, so $\theta'(s)$ is bounded on the support of $F(\theta(s),\e\hat t)$. Additionally, this allows us to (i) extend $F_1(s)$ from $[\al\cdot y_*(-a),\al\cdot y_*(a)]$ to $\br$ by zero without reducing the smoothness of $F_1$, and (ii) integrate with respect to $\hat s$ over $\br$.

Using \eqref{holder}, that $w$ is compactly supported, and Property $H1$, replace $\hat s$ with $\hat p$ in the arguments of $\theta$ and $F_1$ in the last integral in \eqref{g-fn-sigma3}
\be\label{gfn-s3-st2}\begin{split}
\hat f_\e(\al,\e \hat p)=&\e\int\int_0^{\e^{-1}H_\e(\theta(\e\hat p))} w\left(\hat p-\hat s-\hat t\cos(\theta(\e\hat p)-\al)\,\right)\dd\hat t\, F_1(\e\hat p)\dd\hat s+O(\e^{1+(\ga/2)})\\
=&\e F_1(\e\hat p)H_0\left(\e^{-1/2}\theta(\e \hat p)\right)+O(\e^{1+(\ga/2)}).
\end{split}
\ee
Substitute \eqref{gfn-s3-st2} into \eqref{recon-orig} and sum over $j$
\be\label{gfn-sub}
\begin{split}
&\frac1{\e}\sum_{j} (\CH\ik')\left(\hat p-j\right)\left(\e F_1(\e j)H_0\left(\e^{-1/2}\theta(\e j)\right)+O(\e^{1+(\ga/2)})\right)=I_\e(\al,\hat p)+O(\e^{\ga/2}),\\
&I_\e(\al,\hat p):=\sum_{j} (\CH\ik')\left(\hat p-j\right)F_1(\e j)H_0\left(\e^{-1/2}\theta(\e j)\right).
\end{split}
\ee
Since $\sum_{j} (\CH\ik')\left(\hat p-j\right)\equiv0$, we have
\be\label{gfn-sub-v2}\begin{split}
I_\e(\al,\hat p)=\sum_{j} (\CH\ik')\left(\hat p-j\right)\left(F_1(\e j)H_0\bigl(\e^{-1/2}\theta(\e j)\bigr)
-F_1(\e \hat p)H_0\bigl(\e^{-1/2}\theta(\e \hat p)\bigr)\right),
\end{split}
\ee
and
\be\label{gfn-sub-est}
|I_\e(\al,\hat p)|\le c\sum_j (1+\left(\hat p-j\right)^2)^{-1}\left(|\e^{1/2}(j-\hat p)|^{\ga}+\frac{\e|j-\hat p|}{1+\e|j-\hat p|}\right)=O(\e^{\ga/2}).
\ee
The second term in parentheses is written in this form, because $F_1$ is bounded. Writing it in a more conventional form $\e|j-\hat p|$ would cause the series in the upper bound to diverge. 
The above estimate is uniform with respect to $\al\in\Omega_3$, $\hat p\in\br$, and $\e>0$ sufficiently small. Summing over $\al_k$ in the inversion formula we find
\be\label{partI-three}
f^{(3)}(x)
=c\Delta \al \sum_{\al_k \in\Omega_3} I_\e(\al_k,\vec\al_k\cdot x/\e)+O(\e^{\ga/2})=O(\e^{\ga/2}).
\ee
 
\section{Computing the DTB. Examples.}\label{sec: dtb ex}
Since $A\gg 1$ can be arbitrarily large, \eqref{f11-lim}, \eqref{recon-21}, \eqref{partII-lim-final}, and \eqref{partI-three} imply
\be\label{f1-lim}\begin{split}
&\lim_{\e\to0}\left(\frec(x_0+\e\hat x)
-I(\vec\Theta_0\cdot \hat x,\e)\right)=0,\\
&I(h,\e):=-\frac1{2\pi} \int_{\br}\int_0^1 \Psil\left(\tilde\al,h+(R_0/2) \tilde\al^2+q,q\right)\dd q \dd\tilde\al.
\end{split}
\ee
Recall that the dependence of $I(h,\e)$ on $\e$ comes from the dependence of $H_0$ on $\e$, and $H_0$ appears in the definition of $\Psil$. By Lemma~\ref{lem:Psi-asympt}, the double integral above is absolutely convergent. Therefore,
\be\label{I-v0}\begin{split}
I(h,\e):=&-\frac1{2\pi}\lim_{\e_1\to0^+}J(h,\e_1),\\ 
J(h,\e_1):=&\int_{\br}\int_0^1 \Psil\left(\tilde\al,h+(R_0/2) \tilde\al^2+q,q\right)e^{-\e_1\tilde\al^2}\dd q \dd\tilde\al.
\end{split}
\ee
By \eqref{gpm-fn-lt} and \eqref{frst-sub}, the double integral in \eqref{I-v0} transforms to the following expression
\be\label{I-v1}\begin{split}
J(h,\e_1)=&C \iint_{\br^2} (\CH\ik')\left(h+(R_0/2) \tilde\al^2-q\right)e^{-\e_1\tilde\al^2}\\
&\hspace{0.5cm}\times\int_{\br}\int_0^{H_0(\tilde\al+\tth)} w\left(q-(R_0/2) \tth^2-\hat t\right) \dd\hat t \dd\tth \dd q \dd\tilde\al,\ C:=\Delta f(x_0)R_0.
\end{split}
\ee
We inserted an exponential factor in \eqref{I-v0}, since the quadruple integral in \eqref{I-v1} would otherwise not be absolutely convergent. Simplifying and changing variables $u=\tilde\al+\tth$, $v=\tilde\al-\tth$,   gives
\be\label{I-v1-st2}\begin{split}
J(h,\e_1)=&C \iint_{\br^2} \int_0^{H_0(\tilde\al+\tth)}  (\CH\ik'*w)\left(h+(R_0/2)(\tilde\al^2-\tth^2)-\hat t\right)e^{-\e_1\tilde\al^2} \dd\hat t \dd\tth \dd \tilde\al\\
=&\frac{C}2 \iint_{\br^2} (\CH\ik'*w*\chi_{H_0(u)})\left(h+(R_0/2)uv\right)e^{-\e_1(u+v)^2/4} \dd u\dd v.
\end{split}
\ee
See \eqref{H-def} for the definition of $\chi_H$. 

Represent the integrand in terms of its Fourier transform and integrate with respect to $v$:
\be\label{I-v1-st3}\begin{split}
J(h,\e_1)=&-\frac{C}2\frac1{2\pi} \iiint_{\br^3}|\la|\tilde\ik(\la)\tilde w(\la)\tilde\chi_{H_0(u)}(\la)e^{-i\la\left(h+(R_0/2)uv\right)}\\
&\hspace{3cm}\times e^{-\e_1(u+v)^2/4} \dd u \dd v \dd\la\\
=&-\frac{C}{4\pi}\left(\frac{4\pi}{\e_1}\right)^{1/2} \iint_{\br^2}  |\la|\tilde\ik(\la)\tilde w(\la)\tilde\chi_{H_0(u)}(\la)\exp\left(i\frac{\la R_0 u^2}2\right)\\
&\hspace{3cm}\times  \exp\left(-\frac{(\la R_0)^2}{4\e_1}u^2\right) \dd u\, e^{-i\la h}\dd\la,
\end{split}
\ee
where tildas above functions denote the 1D Fourier transform:
\be
\tilde\ik(\la)=\int \ik(x)e^{i\la x}dx.
\ee
The double integral in \eqref{I-v1-st3} converges absolutely, since $\tilde\ik(\la)=O(\la^{-2})$, $\la\to\infty$.

Changing the variable $s=u/\e_1^{1/2}$, we get by dominated convergence:
\be\label{I-v1-st4}\begin{split}
\lim_{\e_1\to 0^+}J(h,\e_1)
=&-\frac{C}{(4\pi)^{1/2}} \iint_{\br^2} |\la|\tilde\ik(\la)\tilde w(\la)\tilde\chi_{H_0(0)}(\la)\\
&\hspace{3cm}\times  \exp\left(-\frac{(\la R_0)^2}{4}s^2\right) \dd s\, e^{-i\la h} \dd\la\\
=&-\frac{C}{R_0} \int_{\br}\tilde\ik(\la)\tilde w(\la)\tilde\chi_{H_0(0)}(\la)\, e^{-i\la h} \dd\la.
\end{split}
\ee
An integrable upper bound is $c|\la||\tilde\ik(\la)|\exp(-(\la R_0s)^2/4)\in L^1(\br^2)$ for some $c>0$. Recall that $\tilde\ik(\la)\in L^1(\br)$ due to IK1. Finally,
\be\label{I-v1-st5}\begin{split}
I(h,\e)=&\frac{C}{R_0}(\ik*w*\chi_{H_0(0)})(h)=\Delta f(x_0)(\ik*w*\chi_{H_0(0)})(h).
\end{split}
\ee

%
%
\vspace{2mm}

\noindent
{\bf Example 1}: Constant width layer. Suppose $H_0(\theta)\equiv H$ is a constant. In this case $H_\e(\theta)=\e H$, and \eqref{final-lim-v1} and \eqref{final-lim-v2} are consistent with each other. 
Indeed, let us return to the situation in the remark following \eqref{main-fn}, where $f_\e$ modifies the original function $f$. Application of \eqref{final-lim-v1} to the modified function $f-f_\e$ gives \eqref{final-lim-v1}, where $\vec\Theta_0\cdot\hat x$ is replaced with $\vec\Theta_0\cdot\hat x-H$. This is precisely what we get by subtracting \eqref{final-lim-v2} from \eqref{final-lim-v1}.
\vspace{2mm}

\noindent
{\bf Example 2}: Fractal boundary. 
Suppose that $H_\e$ is given by
\be\label{He-def}\begin{split}
&H_\e(s):=\e^{1-(\ga/2)}\sum_{n=n_0(\e)}^{\infty} r^{-\ga n}\phi(r^n s),\\
&0<\ga<1,\ r>1,\ \phi(0)=0,\ n_0(\e)=c-\lfloor(1/2)\log_r\e\rfloor,\ c\in\mathbb Z.
\end{split}
\ee
where $\phi\in C_*^\bt(\br)$, i.e. $\phi$ is bounded and Holder continuous with exponent $\bt$, $\ga<\bt < 1$.  By \eqref{H0-defs},
\be\label{H0-eq}\begin{split}
H_0(s):=&\e^{-(\ga/2)}\sum_{n=n_0(\e)}^{\infty} r^{-\ga n}\phi(r^n \e^{1/2}s)\\
=&\sum_{n=c}^{\infty} r^{-\ga (n+q_\e)}\phi(r^{n+q_\e}s),\ q_\e=\{(1/2)\log_r\e\}.
\end{split}
\ee
The function $H_0$ is a real Weierstrass-type function (see \cite{Borodich1999}), which is continuous everywhere, differentiable nowhere, and its graph is a curve whose fractal dimension exceeds one \cite{Borodich1999}. See also \cite{Baranski2015} for a slightly less general case, where $\phi$ is $\mathbb Z$-periodic. 
It is well-known that $H_0$ is bounded and Holder continuous with exponent $\ga$. From this, properties $H1$ and $H2$ follow immediately. Thus, our approach allows the analysis of reconstruction of functions with singularities along rough (e.g., fractal) curves.

\section{Remote singularities. End of the proof of Theorem~\ref{main-res}}\label{sec: rem sing}

In this section we pick $x_0\not\in\s$ and show that the reconstruction of $f_\e$ from discrete data does not create artifacts in a neighborhood of $x_0$ (i.e., there are no nonlocal artifacts there) as long as $x_0$ is generic. This will prove \eqref{final-lim far zone v2}, the last assertion of Theorem~\ref{main-res}. 

Without loss of generality, we may suppose that $x_0$ satisfies $(x_0-y_*(0))\cdot\vec\Theta_0=0$, but $x_0\not=y_*(0)$. We can still use Lemma~\ref{lem:Psi-asympt}, because it is independent of the reconstruction point. Now, the function $\vec\Theta\cdot(x_0-y_*(\theta))$ has a root of first order at $\theta=0$ (by condition (1) in the definition of a generic point), so
\be\label{parts_I-II-est}\begin{split}
&|f^{(1)}(x_0+\e \hat x)+f^{(2)}(x_0+\e \hat x)|\\
&\le c_1\e^{1/2} \sum_{|\al_k| \le a} \left|\Psi\left(\al_k,\vec\al_k\cdot \hat x+\frac{\vec\al_k\cdot (x_0-y_*(\al_k))}\e+q_k,q_k\right)\right|\\
&\le c_2 \e^{1/2} \sum_{k=1}^{O(\e^{-1})}k^{-(1+\de)/2}
=O(\e^{\de/2}).
\end{split}
\ee
Here $\de$ is the same as in \eqref{Psi-asymp}. In this argument we assume that $a>0$ is sufficiently small. How small $a$ should be depends on the distance $|x_0-y_*(0)|$. This distance depends only on the properties of $f$, e.g. the geometry of $\s$, and, therefore, is fixed for any given $f$. So a sufficiently small $a>0$ can be selected and then held fixed throughout the proof.

The estimate in \eqref{partI-three} does not depend on the location of $x_0$, so it applies in this case as well. Hence $\frec(x_0+\e \hat x)\to 0$ as $\e\to 0$.

Using the linearity of the Radon transform and a partition of unity type of argument finishes the proof of Theorem~\ref{main-res}. 

\section{Numerical experiments I}\label{sec numexp I}

Our first experiment is with an oscillatory perturbation. The perturbed boundary $\s_\e$ oscillates around a curve $\s$, which is the boundary of the disc centered at $x_c=(0.1,0.2)$ with radius $R=0.3$. The equation of $\s_\e$ is $r(\theta)=R+2\e\cos(0.71\theta/\e^{1/2})$ in polar coordinates with the origin at the center of the disc. Here 
$$
\e=\Delta p=1.2/(N_p-1),\ \Delta_\theta=\pi/N_\theta,\ N_\theta=N_p-1.
$$
Obviously, the perturbation satisfies \eqref{holder} with $\ga=1$. The phantoms (i.e., the density plots of $f_\e(x)$) with $N_p=501$ and $N_p=1001$ are shown in Figire~\ref{fig:eop}. 

We use the Keys interpolation kernel \cite{Keys1981, btu2003}
\be\label{keys}
\ik(t)=3B_3(t+2) - (B_2(t+2) + B_2(t + 1)),
\ee
where $B_n$ is the cardinal $B$-spline of degree $n$ supported on $[0, n+1]$. The kernel is a piecewise-cubic polynomial, and $\ik,\ik'$ are continuous, hence $\ik\in C_0^2(\br)$.

We use this opportunity to correct the typo in \cite[eq. (6.1)]{Katsevich2017a}. The interpolating kernel used in the numerical experiments reported in \cite{Katsevich2017a} was not the cubic $B$-spline, but the Keys kernel \eqref{keys}.

\begin{figure}[h]
{\centerline{
{\vbox{
{\hbox{
{\epsfig{file={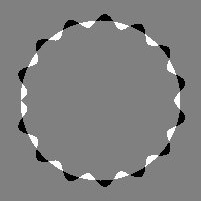}, width=4cm}}
{\epsfig{file={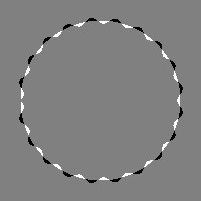}, width=4cm}}
}}
{\hbox{
{\epsfig{file={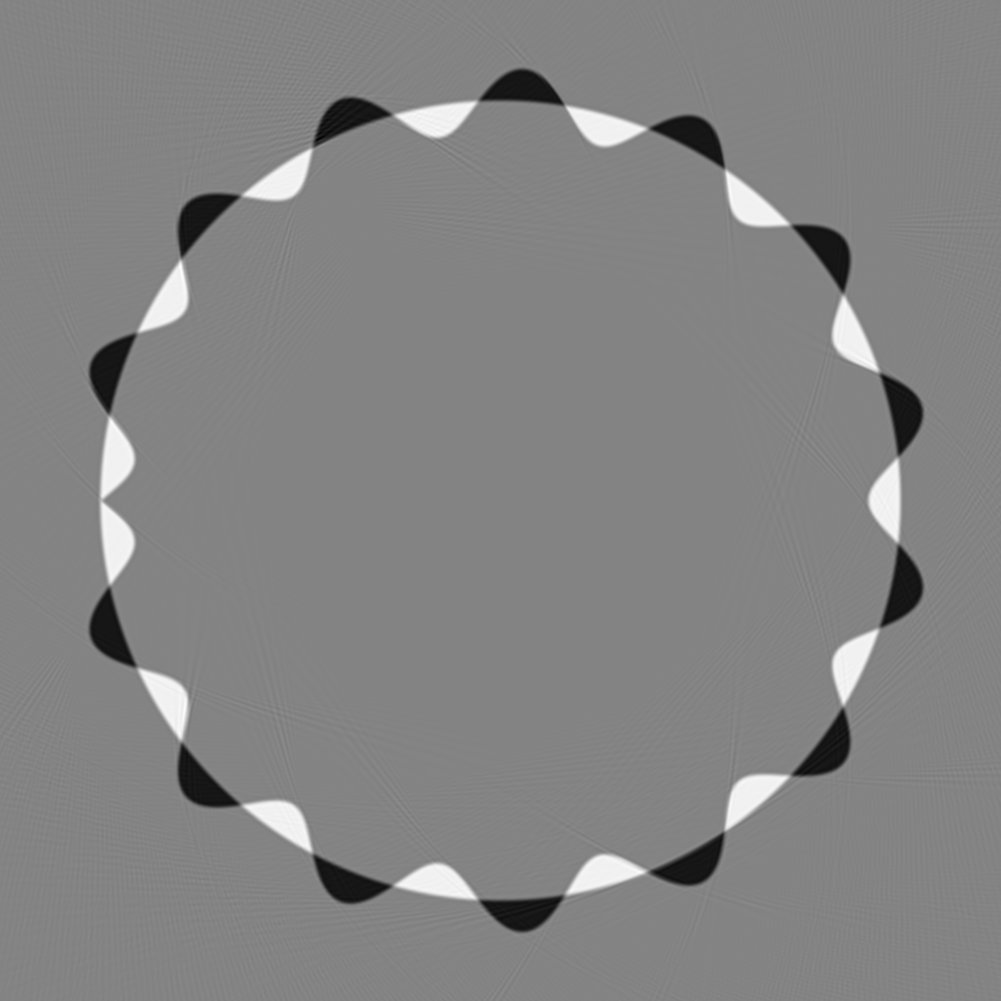}, width=4cm}}
{\epsfig{file={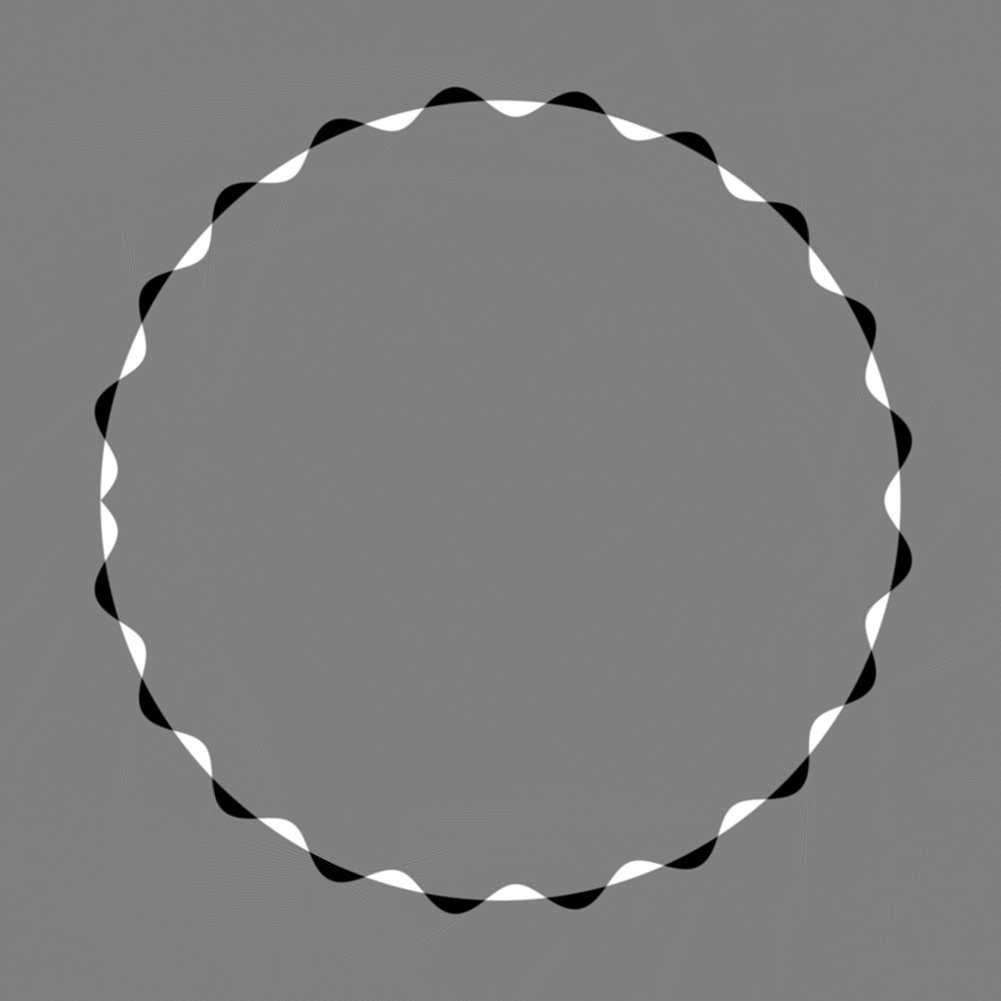}, width=4cm}}
}}
}}}}
\caption{Entire oscillatory phantom, $\ga=1$. Left column: $N_p=501$, right column: $N_p=1001$. Top row: originals, bottom row: recon\-structions.}
\label{fig:eop}
\end{figure}

Reconstructions of a region of interest (ROI) are shown in Figures~\ref{fig:ROI_op_500} and \ref{fig:ROI_op_1000}. The region of interest is centered at the point on the boundary of the disc
$$
x_0 = x_c - R(\cos(\al),\sin(\al)),\ \al=0.32\pi.
$$
The ROI is a square with side length $100\e$ centered at $x_0$. In Figure~\ref{fig:ROI_op_500}, $N_p=501$, and in Figure~\ref{fig:ROI_op_1000} - $N_p=1001$. In this and all other figures, grey color stands for pixel value 0, black color - pixel value (-1), and white color - pixel value 1. Notice that the ROI scales linearly with $\e$. Since the period of oscillations of $\s_\e$ scales like $\e^{1/2}$, the ROI contains fewer periods of $\s_\e$ as $\e$ decreases. Figures~\ref{fig:ROI_op_500} and \ref{fig:ROI_op_1000} demonstrate a good match between the DTBs (cf. Theorem~\ref{main-res}) and reconstructions.

\begin{figure}[h]
{\centerline{
{\hbox{
{\epsfig{file={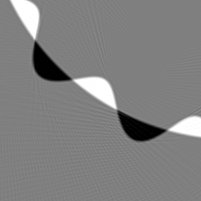}, width=2.5cm}}
{\epsfig{file={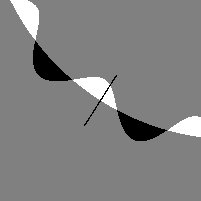}, width=2.5cm}}
{\epsfig{file={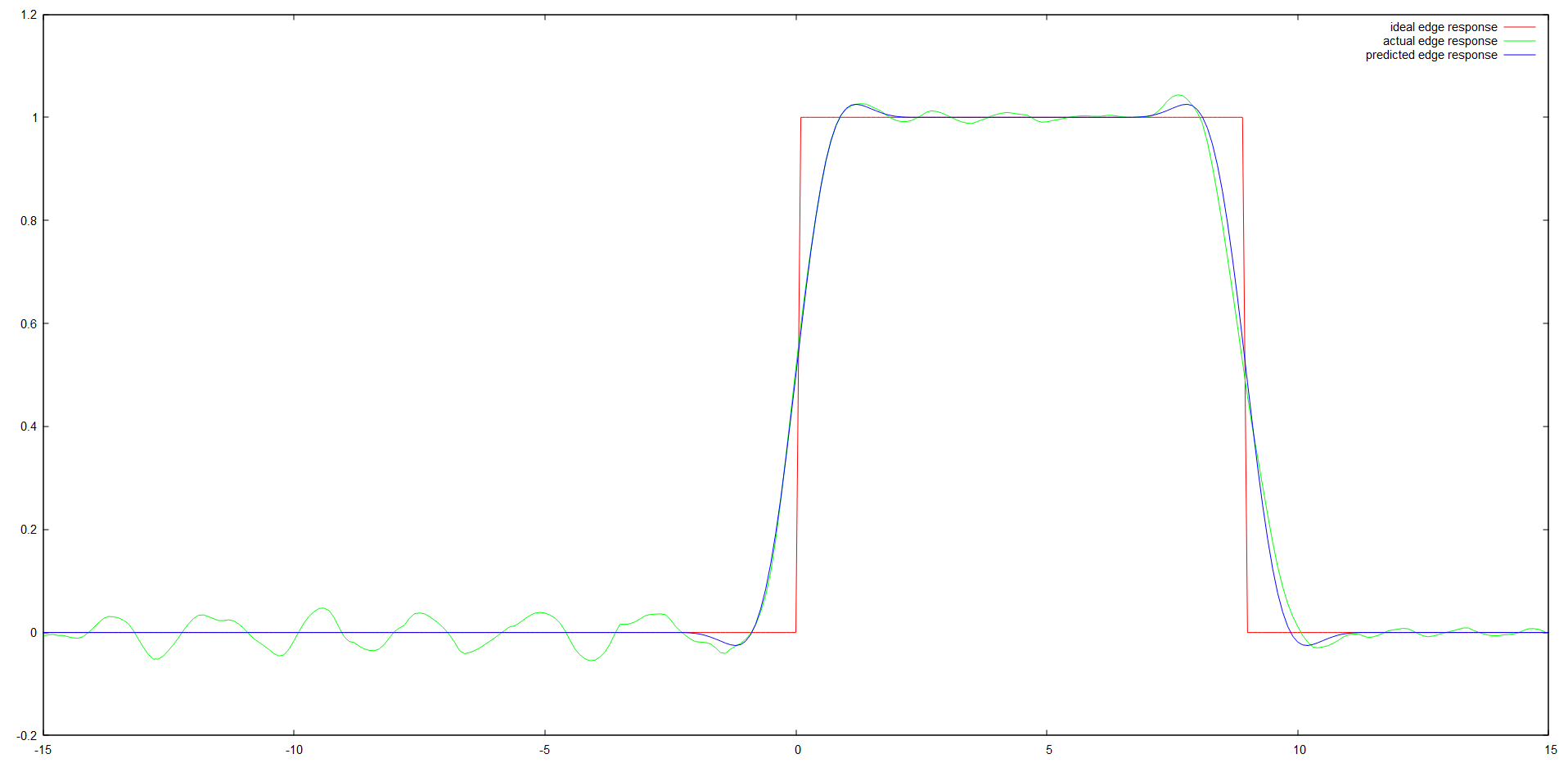}, width=6cm}}
}}}}
\caption{ROI in the oscillatory phantom, $\ga=1$, $\al=0.32\pi$, $N_p=501$. Left panel: reconstruction, middle panel: ground truth with the location of the profile shown, right panel: profiles along the line indicated in the middle panel.}
\label{fig:ROI_op_500}
\end{figure}

\begin{figure}[h]
{\centerline{
{\hbox{
{\epsfig{file={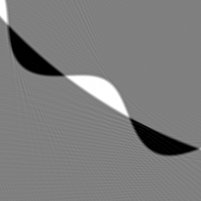}, width=2.5cm}}
{\epsfig{file={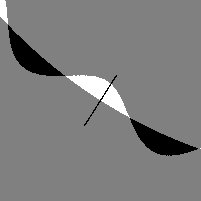}, width=2.5cm}}
{\epsfig{file={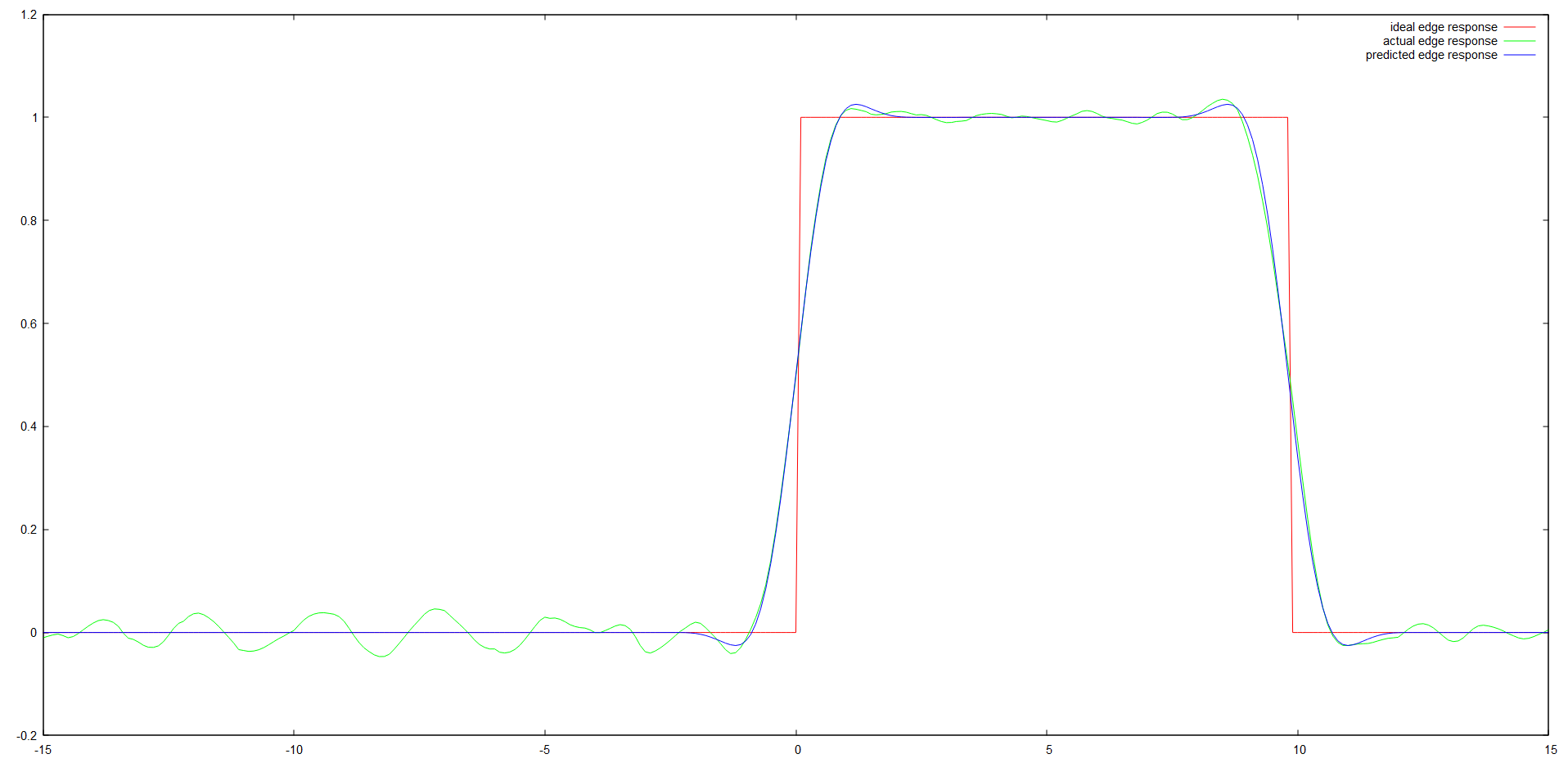}, width=6cm}}
}}}}
\caption{ROI in the oscillatory phantom, $\ga=1$, $\al=0.32\pi$, $N_p=1001$. Left panel: reconstruction, middle panel: ground truth with the location of the profile shown, right panel: profiles along the line indicated in the middle panel.}
\label{fig:ROI_op_1000}
\end{figure}

Our second experiment is with a phantom with a fractal perturbation. The fractal boundary $\s_\e$ is around the same disc as above. The equation of $\s_\e$ now is $r(\theta)=R+\e H_0(\theta/\e^{1/2})$ in polar coordinates with the origin at the center of the disc, where 
\be\label{H0 used}
H_0(s)=5\sum_{n=c}^{\infty} r^{-\ga n}\sin(r^n s),\ c=\lfloor\log_r(\pi)\rfloor,\ r=\sqrt{12},\ \ga=1/2.
\ee
The phantoms with $N_p=501$ and $N_p=1001$ are shown in Figire~\ref{fig:frp}. 

\begin{figure}[h]
{\centerline{
{\vbox{
{\hbox{
{\epsfig{file={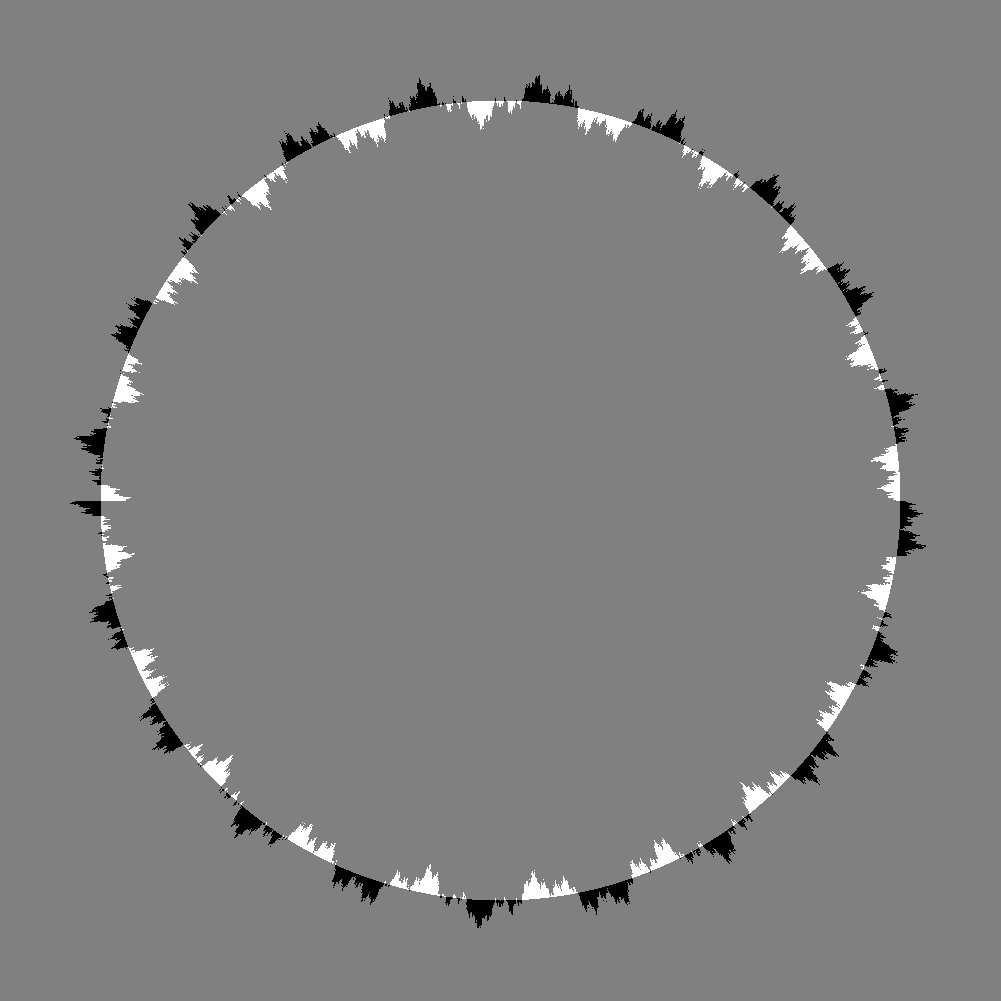}, width=4cm}}
{\epsfig{file={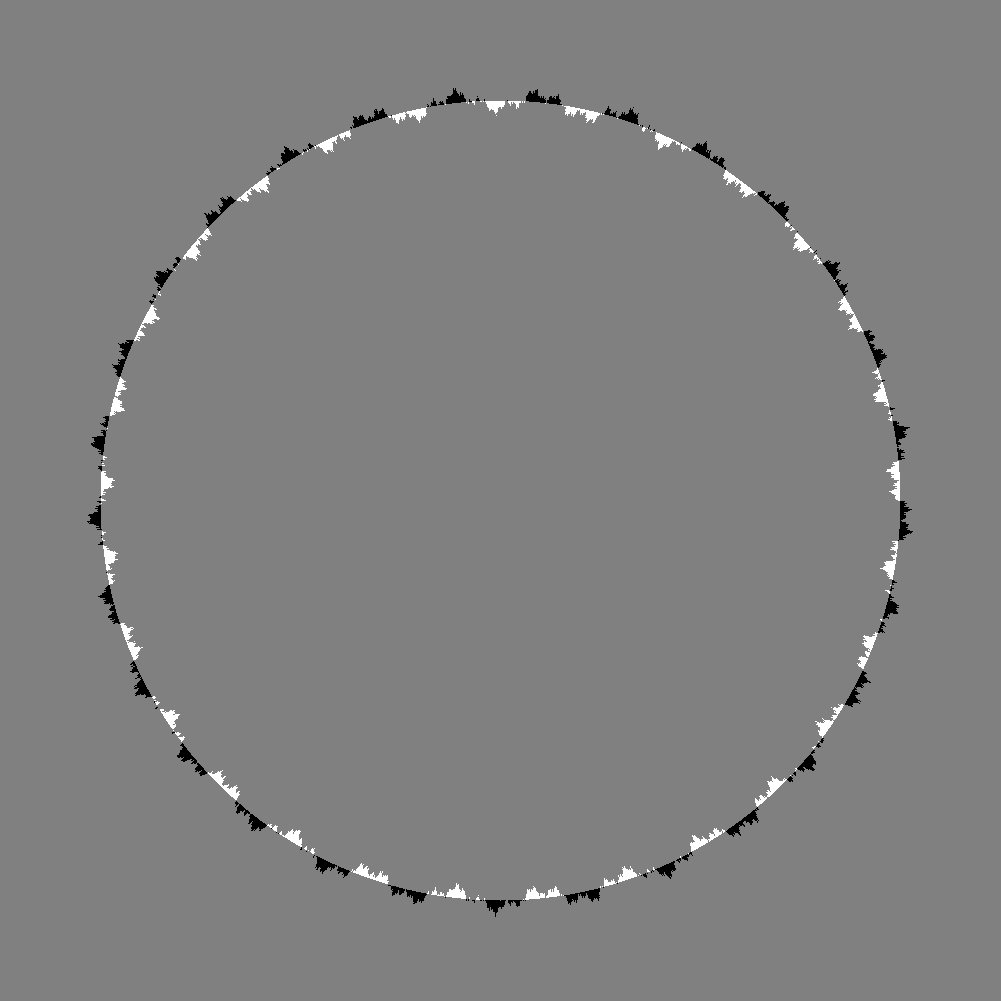}, width=4cm}}
}}
{\hbox{
{\epsfig{file={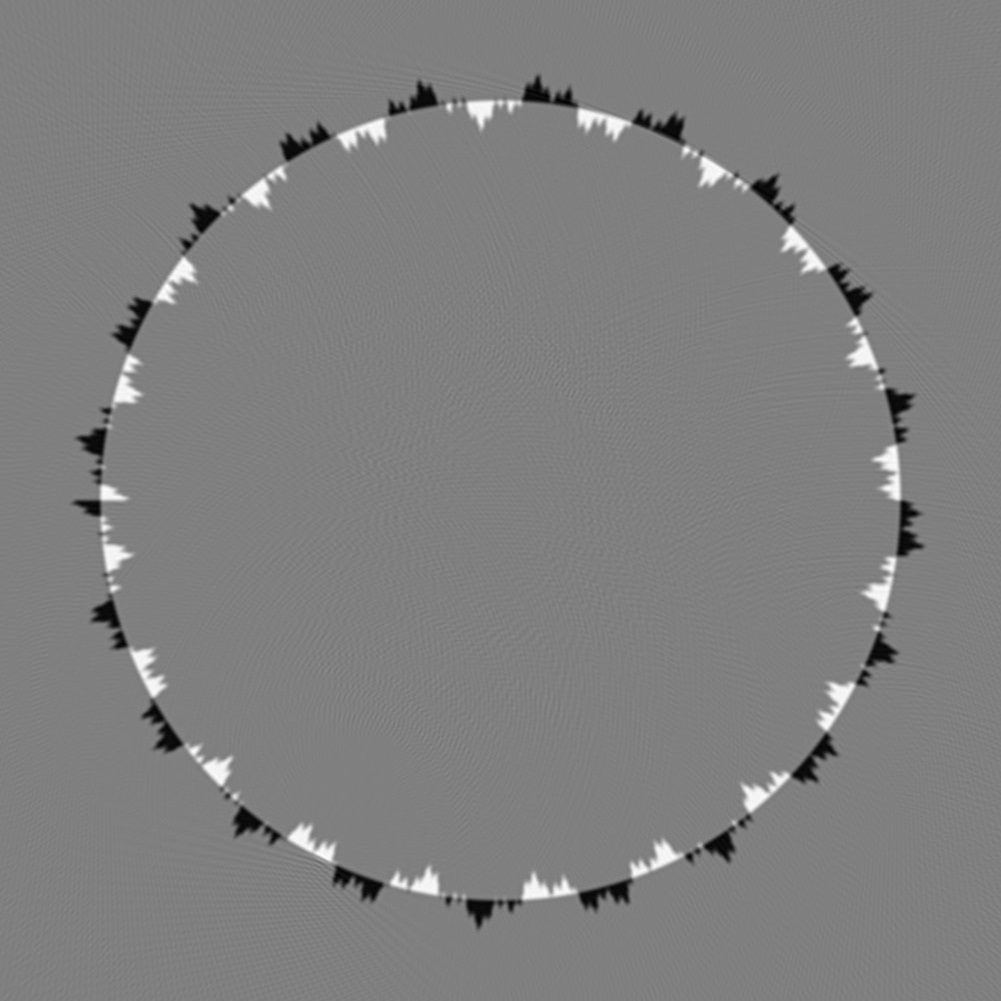}, width=4cm}}
{\epsfig{file={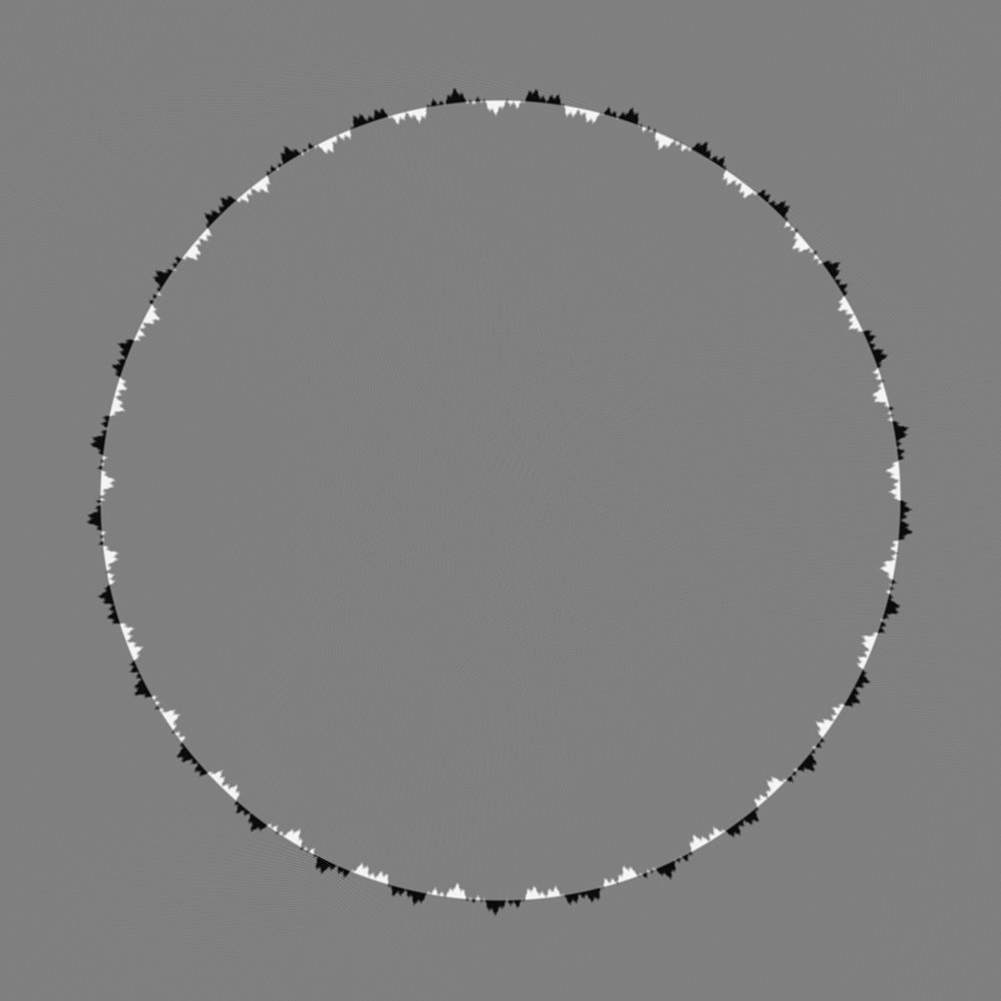}, width=4cm}}
}}
}}}}
\caption{Phantom with a fractal boundary, $\ga=1/2$. Left column: $N_p=501$, right column: $N_p=1001$. Top row: ground truth, bottom row: reconstruction. Global artifacts are not visible.}
\label{fig:frp}
\end{figure}

\begin{figure}[h]
{\centerline{
{\vbox{
{\hbox{
{\epsfig{file={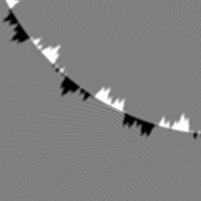}, width=2.5cm}}
{\epsfig{file={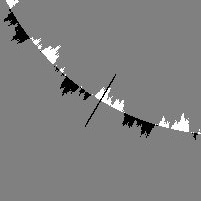}, width=2.5cm}}
{\epsfig{file={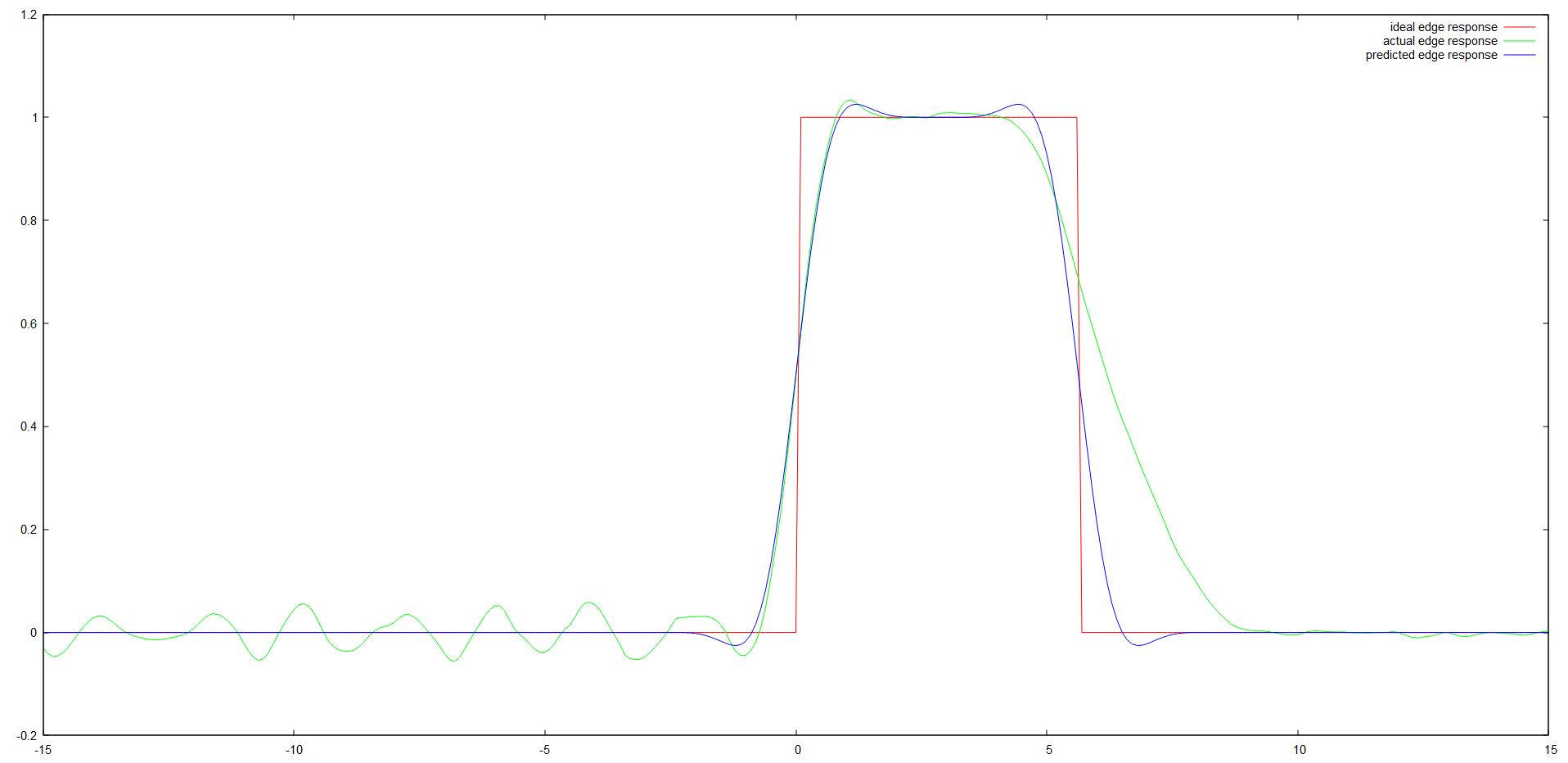}, width=6cm}}
}}
{\hbox{
{\epsfig{file={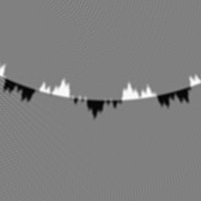}, width=2.5cm}}
{\epsfig{file={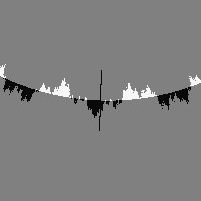}, width=2.5cm}}
{\epsfig{file={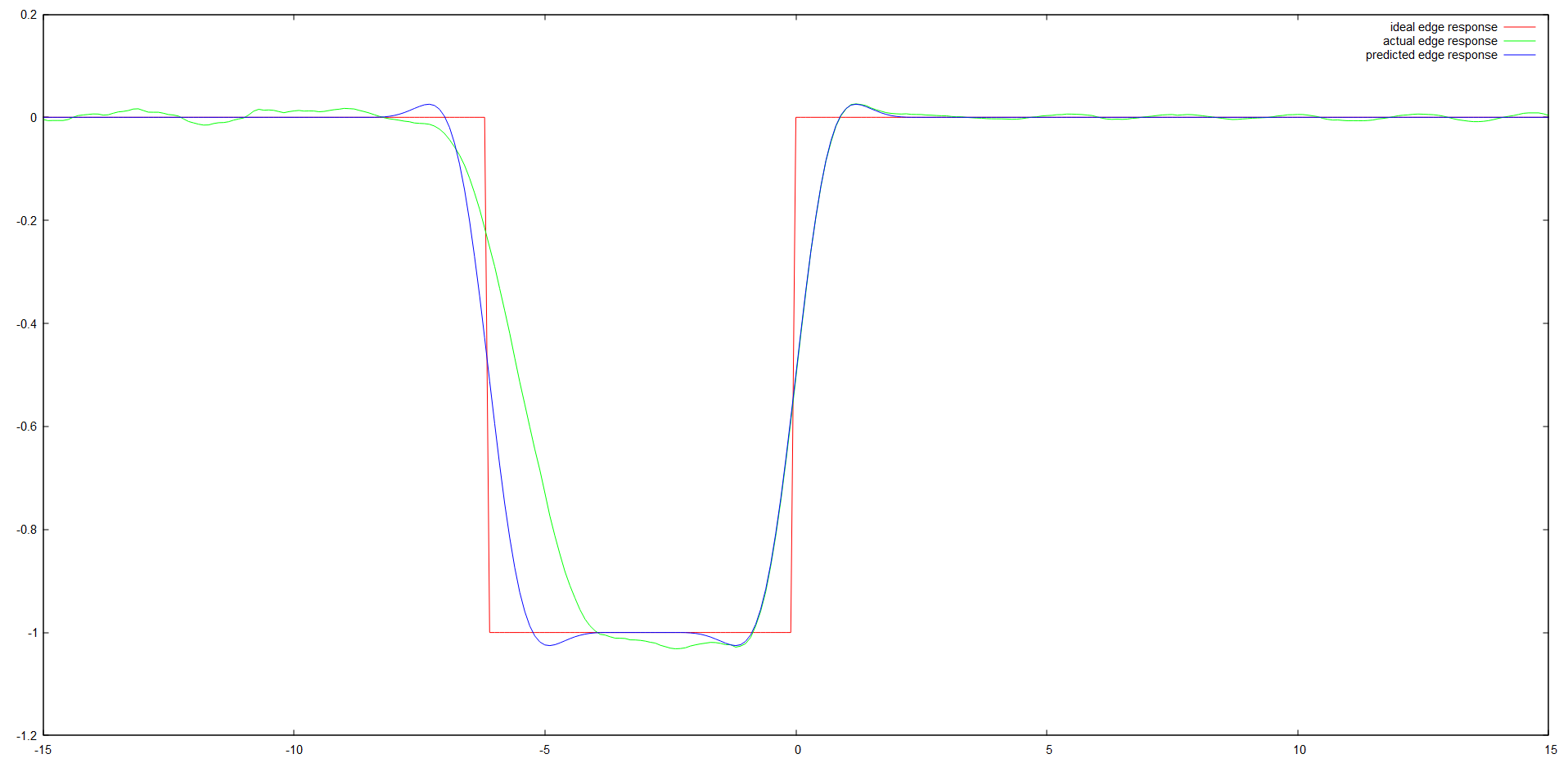}, width=6cm}}
}}
}}}}
\caption{ROI in the fractal phantom, $\ga=1/2$, $N_p=501$. Left column: reconstruction, middle column: ground truth with the location of the profile shown, right column: profiles along the line indicated in the middle panel. Top row: $\al=0.33\pi$, bottom row: $\al=0.49\pi$.}
\label{fig:ROI_frac_33_49_500}
\end{figure}

\begin{figure}[h]
{\centerline{
{\vbox{
{\hbox{
{\epsfig{file={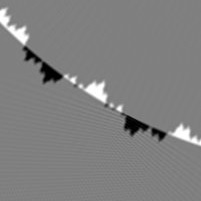}, width=2.5cm}}
{\epsfig{file={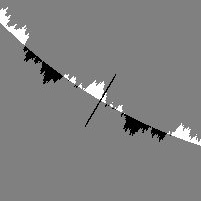}, width=2.5cm}}
{\epsfig{file={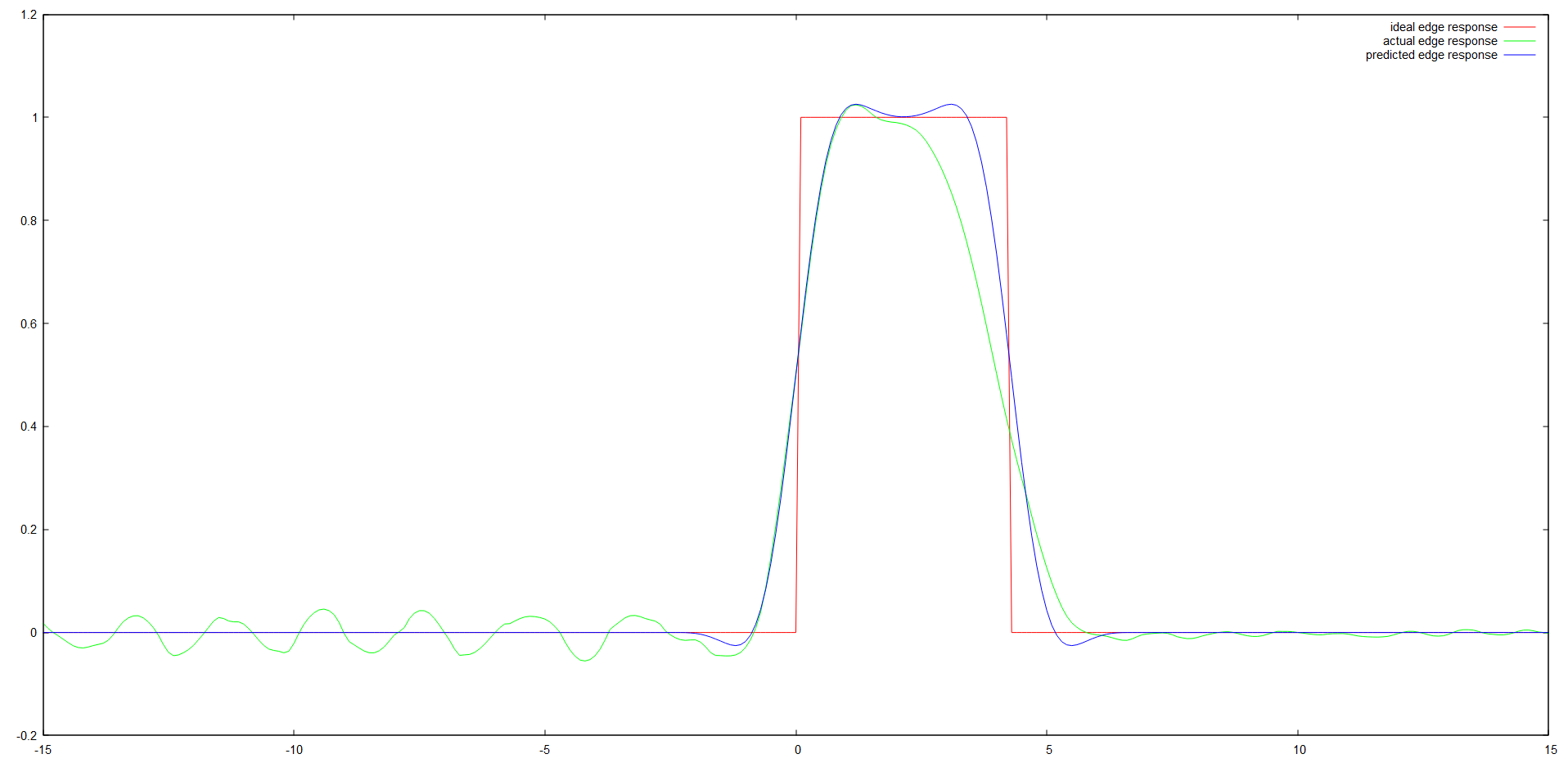}, width=6.0cm}}
}}
{\hbox{
{\epsfig{file={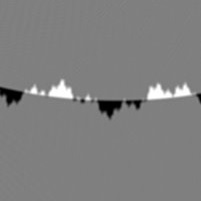}, width=2.5cm}}
{\epsfig{file={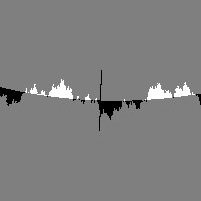}, width=2.5cm}}
{\epsfig{file={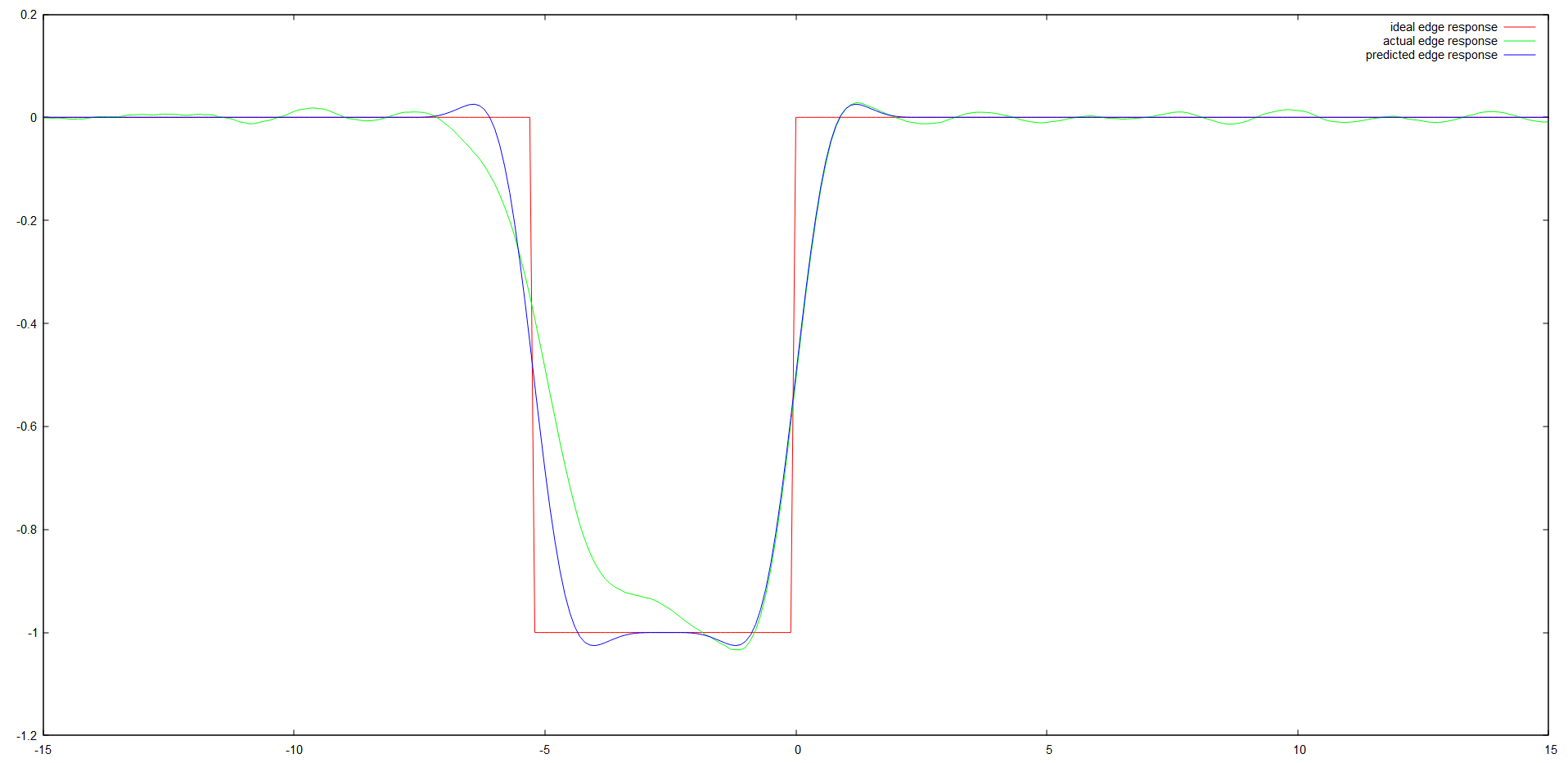}, width=6.0cm}}
}}
}}}}
\caption{ROI in the fractal phantom, $\ga=1/2$, $N_p=1001$. Left column: reconstruction, middle column: ground truth with the location of the profile shown, right column: profiles along the line indicated in the middle panel. Top row: $\al=0.33\pi$, bottom row: $\al=0.49\pi$.}
\label{fig:ROI_frac_33_49_1000}
\end{figure}

Comparing Figures~\ref{fig:ROI_op_500} with \ref{fig:ROI_frac_33_49_500} and \ref{fig:ROI_op_1000} with \ref{fig:ROI_frac_33_49_1000}, we see that the convergence of the reconstruction to the DTB in Theorem~\ref{main-res} is slower for smaller values of $\ga$. The convergence is the fastest for the globally smooth boundary (close to the zero coordinate in the plots on the right in Figures~\ref{fig:ROI_op_500}, \ref{fig:ROI_op_1000}, \ref{fig:ROI_frac_33_49_500}, and \ref{fig:ROI_frac_33_49_1000}), slower when $\ga=1$, and the slowest - when $\ga=0.5$.

\section{New DTB. Numerical experiments II}\label{sec numexp II}

From \eqref{data_eps} and \eqref{recon-orig} it is clear that we can write the reconstruction in the form
\be\label{via_kernel}
\frec(x)=-\frac{\Delta\al}{2\pi}\frac1{\e^2}\sum_{|\al_k|\le \pi/2}\sum_j \CH\ik'\left(\frac{\vec\al_k\cdot x-p_j}\e\right) \iint w\left(\frac{p_j-\vec\al_k\cdot y}{\e}\right)f_\e(y)\dd y.
\ee
Arguing formally, the sums with respect to $k$ and $j$ become integrals, and we get
\be\label{via_kernel_lim}\begin{split}
&\lim_{\e\to0}\left(\frec(x)-\frac{1}{\e^2}\iint K\left(\frac{x-y}\e\right)f_\e(y)\dd y\right)=0,\\
&K(z):=-\frac{1}{2\pi}\int_0^\pi (\CH\ik'*w)(\vec\al\cdot z) \dd\al.
\end{split}
\ee
Writing \eqref{via_kernel_lim} in the form of \eqref{DTB new use} we have
\be\label{new DTB}\begin{split}
&DTB_{new}(\check x,\e):=\frac{1}{\e^2}\iint K\left(\frac{(x_0+\e\check x)-y}\e\right)f_\e(y)\dd y,\\
&\frec(x_0+\e\check x)=DTB_{new}(\check x,\e)+\text{error term}.
\end{split}
\ee
Obviously, $K$ is radial and compactly supported. This follows, because $\hat K(\al,t)=(\ik*w)(t)$ is radial, compactly supported, and even (i.e., in the range of the Radon transform). Recall that $\hat K$ denotes the Radon transform of $K$. 

We see that \eqref{new DTB} is easy to implement and easy to analyze in order to investigate the resolution of reconstruction from discrete data. Also, equation \eqref{via_kernel_lim}, if it is correct, implies the result proven earlier. Intuitively, as $\e\to0$, the boundaries $\s$ and $\s_\e$ become locally flat, and we get Theorem~\ref{main-res}. To see this fact rigorously, start similarly to \eqref{g-fn}:
\be\label{new DTB deriv}\begin{split}
DTB_{new}(\check x,\e)=&\frac1{\e^2}\int_{-a}^a\int_{0}^{H_\e(\theta)} K\left(\check x-\frac{y_*(\theta)+t\vec\Theta}\e\right)F(\theta,t) \dd t \dd\theta \\
=&\int_{-a/\e}^{a/\e}\int_0^{\e^{-1}H_\e(\e\hat\theta)} K\left(\check x-\frac{y_*(\e\hat\theta)+\e\hat t\vec\Theta}\e\right)F(\e\hat\theta,\e\hat t) \dd\hat t \dd\hat\theta,
\end{split}
\ee
where $F$ is the same as in \eqref{g-fn}, and we used that $x_0=y_*(0)=0$. Since $K$ is compactly supported, it is clear that $\hat\theta$ is confined to a bounded set, and
\be\label{new DTB deriv st2}\begin{split}
DTB_{new}(\check x,\e)
=&F(0,0)\int_{\br}\int_0^{H_0(\e^{1/2}\hat\theta)} K\left(\check x-\left(\hat\theta y_*'(0)+\hat t\vec\Theta_0\right)\right) \dd\hat t \dd\hat\theta+O(\e),
\end{split}
\ee
where $H_0(\e^{1/2}\hat\theta)=H_0(0)+O(\e^{\ga/2})$. Using that $F(0,0)=\Delta f(x_0)R_0$ and $|y_*'(0)|=R_0$, we compute
\be\label{new DTB deriv st3}\begin{split}
DTB_{new}(\check x,\e)
=&\Delta f(x_0)\int_{\br}\int_0^{H_0(0)} \hat K\left(\vec\Theta_0,\vec\Theta_0\cdot\check x-\hat t\right) \dd\hat t \dd\hat\theta+O(\e^{\ga/2}).
\end{split}
\ee
Recalling that $\hat K=\ik*w$, the assertion follows.

Moreover, the local convergence of $\s_\e$ to a flat line segment (which is described in terms of the convergence $H_0(\e^{1/2}\hat\theta)\to H_0(0)$) is slower the lower the value of $\ga$ is, which matches the observations in Section~\ref{sec numexp I}. Thus, \eqref{new DTB} has the potential to be a more accurate result than that given in Theorem~\ref{main-res}. Our numerical experiments confirm this conjecture. We implemented the kernel $K(z)$ and convolved it with $f_\e$ to compute $\DTB_{new}$ for the same two values of $\e$. The graphs along the same lines as in Figures~\ref{fig:ROI_frac_33_49_500}, \ref{fig:ROI_frac_33_49_1000}, are shown in Figures~\ref{fig:ROI_with2D_33_49_500}, \ref{fig:ROI_with2D_33_49_1000}, respectively. By comparing the profiles we see that the latter are significantly more accurate than the former. 

\begin{figure}[h]
{\centerline{
{\vbox{
{\epsfig{file={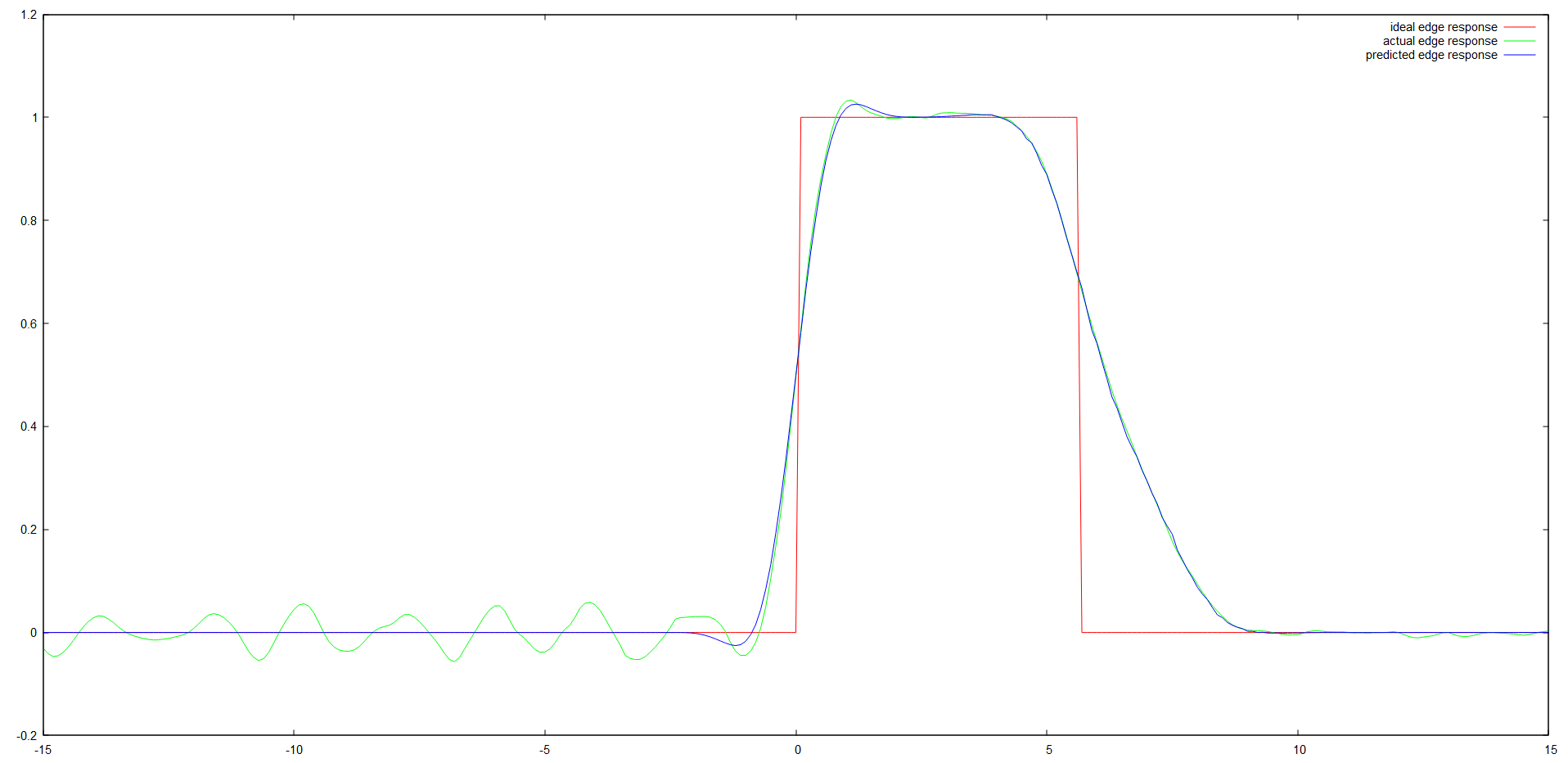}, width=6.0cm}}
{\epsfig{file={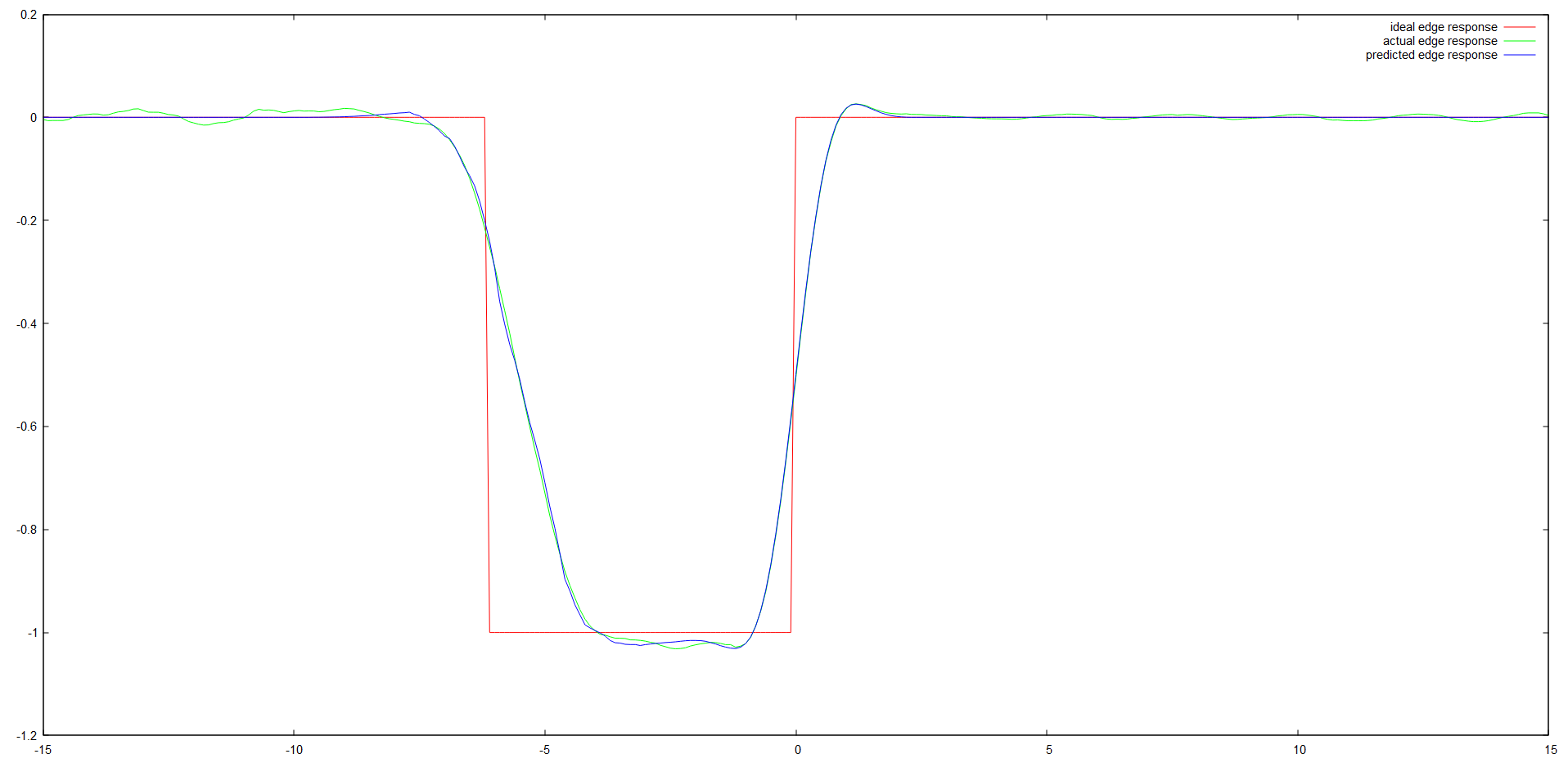}, width=6.0cm}}
}}}}
\caption{ROI in the fractal phantom of Figure~\ref{fig:ROI_frac_33_49_500}, $\ga=0.5$, $N_p=501$. Left: $\al=0.33\pi$, right: $\al=0.49\pi$.}
\label{fig:ROI_with2D_33_49_500}
\end{figure}

\begin{figure}[h]
{\centerline{
{\vbox{
{\epsfig{file={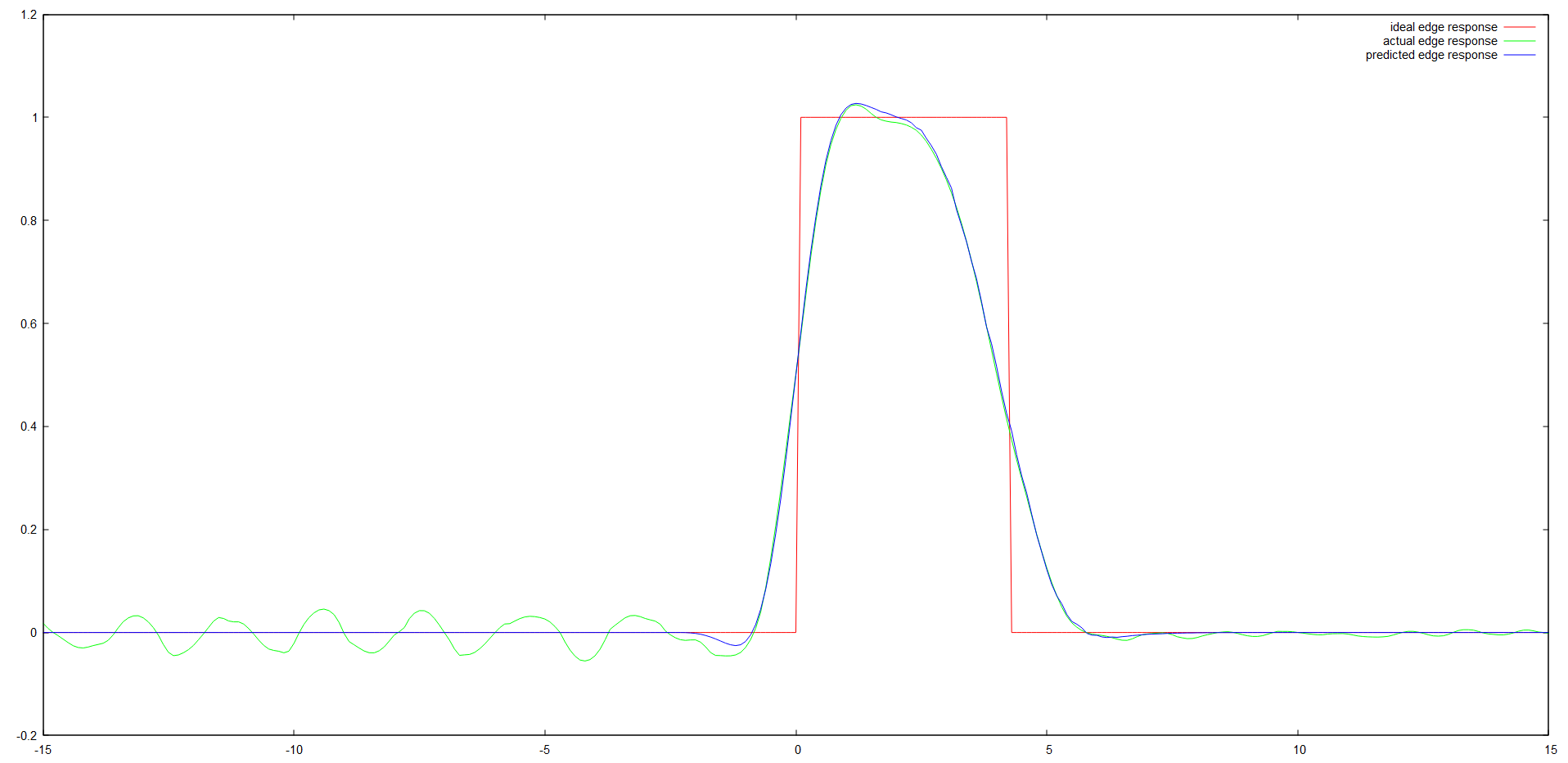}, width=6cm}}
{\epsfig{file={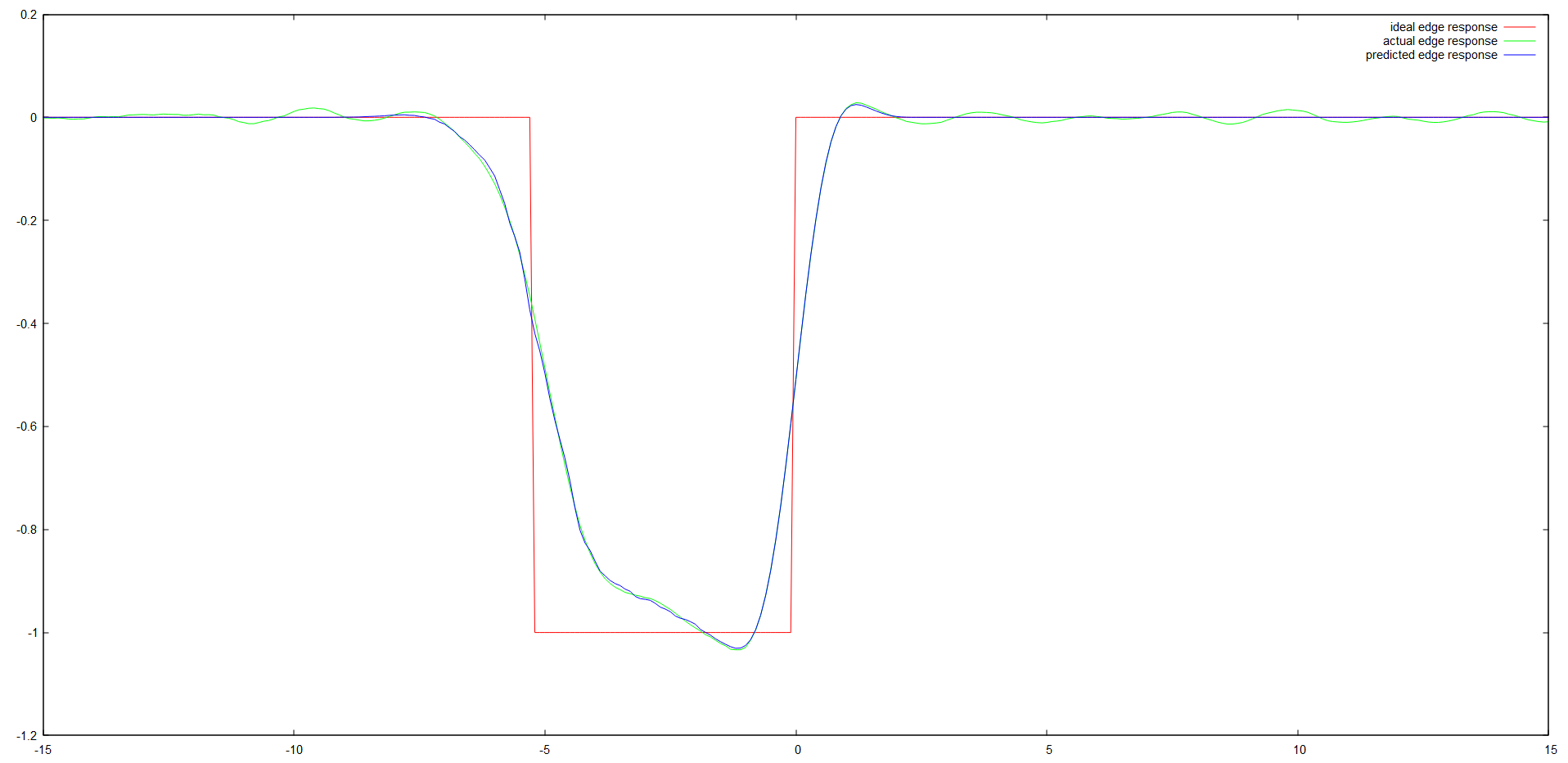}, width=6cm}}
}}}}
\caption{ROI in the fractal phantom of Figure~\ref{fig:ROI_frac_33_49_1000}, $\ga=0.5$, $N_p=1001$. Left: $\al=0.33\pi$, right: $\al=0.49\pi$.}
\label{fig:ROI_with2D_33_49_1000}
\end{figure}

Due to the division by $\e$, the sums with respect to $j$ and $k$ in \eqref{via_kernel} involve stepsizes that do not go to zero as $\e\to0$. Therefore, replacing the sums with integrals in \eqref{via_kernel} appears counterintuitive. 
In general, \eqref{via_kernel_lim} is indeed false. This can be seen, for example, as follows. Since the kernel $K$ is compactly supported, formula \eqref{via_kernel_lim} does not explain non-local artifacts, which are known to arise in case $f$ has a singularity across a line segment. Thus, at best, \eqref{via_kernel_lim} holds only for certain classes of functions. Our numerical experiments suggest that the formula holds for functions described in Section~\ref{sec:a-prelims}. The complete proof of \eqref{via_kernel_lim} is complicated and beyond the scope of this paper. In this section we state one result along these lines and formulate a conjecture about the rate of convergence in \eqref{via_kernel_lim}. A complete explanation of our numerical results requires establishing the convergence rates in \eqref{final-lim-v2} and \eqref{via_kernel_lim}, and making sure that the latter is faster than the former. 

Here we strengthen IK1 by requesting that $\tilde\ik(\la)=O(|\la|^{-3})$, $\la\to\infty$. The Keys interpolation kernel in \eqref{keys} satisfies this assumption. Consider the function
\be\label{recon-ker-0}
\psi(q,t):=\sum_j (\CH\ik')(q-j)w(j-q-t).
\ee
Then
\be\label{psi-props-0}\begin{split}
&\psi(q,t)=\psi(q+1,t),\  q,t\in\br;\quad
\psi(q,t)=O(t^{-2}),\ t\to\infty,\ q\in\br,\\
&\int \psi(q,t)\dd t\equiv 0,\ q\in\br.
\end{split}
\ee
The last property follows from IK2 (see \eqref{exactness}).
By \eqref{psi-props-0}, we can represent $\psi$ in terms of its Fourier series:
\be\label{four-ser}\begin{split}
\psi(q,t)&=\sum_m \tilde\psi_m(t) e(-mq),\ e(q):=\exp(2\pi i q),\\
\tilde\psi_m(t) &=\int_0^1 \psi(q,t)e(mq)\dd q=\int_\br (\CH\ik')(q) w(-q-t)e(mq)\dd q.
\end{split}
\ee
Due to the assumptions that $\ik,w\in C_0^2(\br)$ and $\tilde\ik(\la)=O(|\la|^{-3})$ we have
\be\label{four-coef-bnd}
|\tilde\psi_m(t)| \le c(1+m^2)^{-1}(1+t^2)^{-1},
\ee
for some $c$, so the Fourier series for $\psi$ converges absolutely. Indeed, if $t$ is restricted to any compact set, the result follows because $\widetilde{(\CH\ik')}(\la),\tilde w(\la)=O(\la^{-2})$ implies 
\be\label{convol decay}\begin{split}
\left|\int |\mu|\tilde\ik(\mu)\tilde w(\mu-\la)e^{i(\mu-\la)t}\dd\mu\right|
\le \int |\mu\tilde\ik(\mu)\tilde w(\mu-\la)|\dd\mu=O(\la^{-2}), \la=2\pi m\to\infty.
\end{split}
\ee
If $|t|\ge c$ for some $c\gg1$ sufficiently large, integrate by parts twice and use that 
\be\label{deriv bnd}
\max_{q} |(\pa/\pa q)^2((\CH\ik')(q)w(-q-t))|=O(t^{-2}),\ t\to\infty.
\ee
Differentiation by parts works, because $(\CH\ik')(q)$ is smooth in a neighborhood of any $q$ such that $w(-q-t)\not=0$.

From \eqref{via_kernel}, \eqref{recon-ker-0},  and \eqref{four-ser}, the reconstructed image becomes
\be\label{recon-ker-v2}
\begin{split}
&\frec(x)
=-\frac{\Delta\al}{2\pi}\sum_m \sum_k e\left(-m q_k\right)A_m(\al_k,\e),\ \sum_k:=\sum_{|\al_k|\le\pi/2}, \\
&q_k:=\frac{\vec\al_k\cdot x}\e,\ A_m(\al,\e):=\e^{-2}\iint \tilde\psi_m\left(\frac{\vec\al\cdot (y-x)}\e\right)f_\e(y)\dd y.
\end{split}
\ee
Suppose first that $x_0=y_*(0)$.
To obtain \eqref{via_kernel_lim}, we should be able to replace the sum with respect to $k$ by an integral with respect to $\al$ and ignore all $m\not=0$ terms. The results in Sections~\ref{sec:beg proof} and \ref{sec: rem sing} suggest that the rate in \eqref{final-lim-v2} is $O(\e^{\ga/2})$, cf. \eqref{partI-three}. Thus, to establish that the new DTB is more accurate, we need to show that 
\be\label{extra terms}
\begin{split}
\Delta\al\sum_{m\not=0}& \sum_k e\left(-m q_k\right)A_m(\al_k,\e)=o(\e^{\ga/2}),\  \\
\frac{\Delta\al}{2\pi\e^2} \sum_k&
\iint \int_{\al_k-\Delta\al/2}^{\al_k+\Delta\al/2}  \left(\tilde\psi_0\left(\frac{\vec\al\cdot (y-x)}\e\right)-\tilde\psi_0\left(\frac{\vec\al_k\cdot (y-x)}\e\right)\right) \dd\al \\
&\times f_\e(y)\dd y=o(\e^{\ga/2}).
\end{split}
\ee

Suppose now $x_0\not=y_*(0)$, which is equivalent to assuming $x_0\not\in\s$. In this case $f$ is smooth near $x_0$, so we should expect that $DTB(\check x,\e)\equiv0$ for all $\e>0$ sufficiently small. Since $K(z)$ is compactly supported, $\iint K((x-y)/\e)f_\e(y)\dd y\equiv0$ for all $\e>0$ sufficiently small and all $x$ sufficiently close to $x_0$, precisely as expected. Hence, there is no need to single out the term $m=0$ (which previously gave the only nonzero contribution) in \eqref{recon-ker-v2}, because the entire sum should go to zero sufficiently fast as $\e\to0$. The following lemma states that this is indeed the case. Its proof is in Appendix~\ref{sec new ker prf}.

\begin{lemma}\label{lem:partial res} Pick $x_0\not\in\s$ such that no line through $x_0$, which intersects $\s$, is tangent to $\s$. This includes the endpoints of $\s$, in which case the one-sided tangents to $\s$ are considered. Suppose the level sets of $H_0$ are well-behaved, i.e. there exist $\rho,L_0>0$ independent of $\hat t$ such that any open interval of any length $L\ge L_0$ contains no more than $\rho L$ points from $H_0^{-1}(\hat t)$ for any $\hat t$. Under the assumptions of Theorem~\ref{main-res}, one has
\be\label{extra terms top}
\frec(x)=O(\e^{1/2}\ln(1/\e)),\ \e\to0,
\ee
uniformly with respect to $x$ in a sufficiently small neighborhood of $x_0$.
\end{lemma}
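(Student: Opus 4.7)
The plan is to start from the Fourier decomposition \eqref{recon-ker-v2} of $\frec$ in terms of the functions $\tilde\psi_m$, and to treat the $m=0$ and $m\ne0$ contributions separately.

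The key preliminary observation for the $m=0$ term is that its continuous counterpart vanishes. Because $\hat K=\ik*w$ makes $K$ radial and compactly supported while $f_\e$ is supported in an $O(\e)$-neighborhood of $\s$ that stays at positive distance from $x_0\not\in\s$, one has
\[
\iint K\!\left(\frac{x-y}{\e}\right) f_\e(y)\,\dd y \equiv 0
\]
for all $\e$ small enough and all $x$ in a sufficiently small neighborhood of $x_0$. Via the Radon-inversion structure of $K$ this translates into $\int_{-\pi/2}^{\pi/2} A_0(\al,\e)\,\dd\al = 0$, so the $m=0$ contribution to $\frec(x)$ reduces to the quadrature discrepancy $\Delta\al\sum_k A_0(\al_k,\e) - \int A_0(\al,\e)\,\dd\al$.

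The bulk of the work is then to show that this discrepancy is $O(\e^{1/2}\ln(1/\e))$. I would rewrite $A_0(\al,\e)$ as a one-dimensional integral against $\tilde\psi_0 = \CH\ik'*w$ in the Radon variable $p$, and split the $\al$-range into a bounded collection of near-critical intervals (centred at angles where the line $\{\vec\al\cdot(y-x)=0\}$ meets $\s$ transversally) and the complementary far range. On the far range the $O(\hat p^{-2})$ decay of $\tilde\psi_0$ gives a bound of the same type as \eqref{del-psi1-est}, which already carries the $\ln(1/\e)$ factor. On each near-critical interval the main task is to estimate the variation of $\al\mapsto\hat f_\e(\al,p)$; this is precisely where the level-set hypothesis on $H_0$ is essential, as it bounds the density of $\al$-values at which the line through $x$ can be tangent to $\s_\e$ and thereby bounds a Holder-type seminorm of $\hat f_\e(\al,p)$ in $\al$ with an exponent yielding the $O(\e^{1/2})$ rate after a trapezoidal-type quadrature estimate.

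For each $m\ne 0$ the bound \eqref{four-coef-bnd} provides an extra $(1+m^2)^{-1}$ factor in $\tilde\psi_m$. Repeating the previous argument with $\tilde\psi_m$ in place of $\tilde\psi_0$ produces a bound of the form $Cm^{-2}\e^{1/2}\ln(1/\e)$ on the $m$-th partial sum, and since $\sum_{m\ne 0}m^{-2}<\infty$ this yields the claimed bound on $\frec(x)$. I expect the near-critical estimate for $A_0$ to be the main obstacle: crude pointwise bounds give only $O(1)$ contributions after summation and miss all cancellation, and the sought rate is obtained only by combining the vanishing of the continuous $m=0$ integral (from compact support of $K$) with fine regularity information on $\hat f_\e(\al,\cdot)$ in $\al$ coming from the level-set structure of $H_0$, implemented uniformly in $x$ near $x_0$.
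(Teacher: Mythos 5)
Your proposal diverges from the paper's proof in a way that leaves the central estimate unproven, and a key structural choice is actually contrary to what the paper does.

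First, a structural point: you single out the $m=0$ Fourier mode and plan to bootstrap from the vanishing of the \emph{continuous} $m=0$ integral (via compact support of $K$), treating the remaining modes separately with the $(1+m^2)^{-1}$ decay from \eqref{four-coef-bnd}. The paper does the opposite: the discussion immediately preceding Lemma~\ref{lem:partial res} remarks that there is no need to single out $m=0$, because all modes can be handled by a single mechanism. Your observation that $\iint K\bigl((x-y)/\e\bigr)f_\e(y)\,\dd y\equiv0$ for $x$ near $x_0$ is correct, but the paper never uses it; the cancellation the paper actually exploits is $\int\tilde\psi_m(t)\,\dd t=0$ for \emph{every} $m$ (the last line of \eqref{psi-props-0}, a consequence of IK2). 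That identity lets one shift the lower limit of the inner $\hat t$-integral in \eqref{A-simpl-st3} from $0$ to $H_0(\e^{-1/2}\theta(\al))$ at the cost of an $O(\e^{1/2})$ error, and it works uniformly in $m$. Your plan for $m\neq0$ (``repeating the previous argument with $\tilde\psi_m$ in place of $\tilde\psi_0$'') is incoherent on your own terms, since the ``previous argument'' hinged on a continuous-limit identity that exists only for $m=0$; for $m\neq0$ the corresponding sum over $\al_k$ has no continuous analogue that vanishes by compact support.

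Second, and more substantively, the step you flag as the main obstacle is indeed unaddressed, and the paper's resolution is quite different from the Euler--Maclaurin/H\"older-seminorm picture you sketch. After shifting the lower limit, the paper rewrites the $k$-sum as an integral in $\hat t$ of the quantity $W_{\e,m}$ in \eqref{G and a_k b_k}, whose terms carry the characteristic function $\chi_{H_0(s_k),\,H_0(s_k+\tilde\theta)}(\hat t)$. For fixed $\hat t$, only those $k$ contribute for which $\hat t$ lies between $H_0(s_k)$ and $H_0(s_k+\tilde\theta)$, which forces $(s_k,s_k+\tilde\theta)$ to contain a point of $H_0^{-1}(\hat t)$. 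The level-set hypothesis is then used as a pure counting bound: it caps the number of equivalence classes of contributing $k$'s at $O(\e^{-1/2})$, each of size $O(\max(1,\e^{-1/2}\tilde\theta))$, and together with the decay $\tilde\psi_m(t)=(1+m^2)^{-1}O(t^{-2})$ this yields the pointwise bound \eqref{W est}, which integrates in $\tilde\theta$ and sums in $m$ to $O(\e^{1/2}\ln(1/\e))$. No cancellation from the phases $e(-mq_k)$ and no H\"older regularity of $\hat f_\e$ in $\al$ enters. Your description of the level-set hypothesis as ``bounding the density of $\al$ at which the line through $x$ is tangent to $\s_\e$'' is a reasonable intuition but does not match this counting argument, and you give no mechanism to convert it into an $O(\e^{1/2})$ quadrature error. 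So while your outline contains a correct observation about the continuous $m=0$ term, the core estimate is missing, and the $m\neq0$ part of the plan would not go through as stated.
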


Using numerical evidence and the above lemma as a guide, we state the following conjecture. 
\begin{conjecture} Pick any generic $x_0$. Suppose the level sets of $H_0$ are well-behaved, as defined in Lemma~\ref{lem:partial res}. Under the assumptions of Theorem~\ref{main-res}, one has
\be\label{full conj}
\frec(x_0+\e\check x)=\frac1{\e^2}\iint K\left(\frac{(x_0+\e\check x)-y}\e\right)f_\e(y)\dd y+O(\e^{1/2}\ln(1/\e)),\ \e\to0,
\ee 
where the big-$O$ term is uniform with respect to $\check x$ in any compact set.
\end{conjecture}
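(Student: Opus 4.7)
Starting from the Fourier decomposition \eqref{recon-ker-v2} of $\frec$ and writing the new DTB as
\[
\DTB_{new}(\check x,\e)=-\frac{1}{2\pi}\int_{-\pi/2}^{\pi/2} A_0(\al,\e)\,\dd\al,
\]
the total error $\frec(x_0+\e\check x)-\DTB_{new}(\check x,\e)$ splits naturally into two pieces: (I) the quadrature error for the $m=0$ mode, $\Delta\al\sum_k A_0(\al_k,\e)-\int A_0(\al,\e)\,\dd\al$, and (II) the Fourier tail $\Delta\al\sum_{m\ne0}\sum_k e(-mq_k)A_m(\al_k,\e)$. I would then partition $[-\pi/2,\pi/2]$ into the three zones $\Omega_1,\Omega_2,\Omega_3$ from Section~\ref{sec:beg proof} with $A=A(\e)=\ln(1/\e)$, and estimate each piece on each zone separately.

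On $\Omega_3$ the geometry is non-tangential, so the argument of Lemma~\ref{lem:partial res} from Appendix~\ref{sec new ker prf} applies essentially verbatim and delivers $O(\e^{1/2}\ln(1/\e))$ for both (I) and (II) there. On $\Omega_2$ the asymptotics \eqref{Psi-asymp} together with the Fourier-coefficient bound \eqref{four-coef-bnd} give $|A_m(\al_k,\e)|\le c(1+m^2)^{-1}\bigl(|\al_k|/\e^{1/2}\bigr)^{-(1+\de)}$ after an argument parallel to \eqref{partII-est}; summing in $k$ contributes $O(A^{-\de})$ and in $m$ contributes $O(1)$, giving $O(\ln(1/\e)^{-\de})$, which, once the $A$-dependence is unwound, is absorbed into $O(\e^{1/2}\ln(1/\e))$. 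The corresponding Riemann-sum error on $\Omega_2$ for $m=0$ is handled analogously using the second line of Lemma~\ref{lem:Psi-cont} to bound $|A_0(\al+\Delta\al,\e)-A_0(\al,\e)|$ and the polynomial decay of $\Psi_l$ in $\tilde\al$.

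The oscillatory piece (II) on $\Omega_1$ is controlled by combining the decay $|\tilde\psi_m(t)|\le c m^{-2}(1+t^2)^{-1}$ with cancellation from the phase $e(-mq_k)$. Because $x_0$ is generic, the sequence $\{m q_k\}$ is equidistributed modulo $1$ for every fixed $m\ne0$, and an Erd\H os--Tur\'an discrepancy bound, combined with summation-by-parts against the smooth-in-$\al$ amplitude $A_m(\al_k,\e)$, converts this equidistribution into a quantitative $O(\e^{1/2}\ln(1/\e))$ estimate, uniformly in $m$. Summing in $m$ via $m^{-2}$ then gives the desired bound for (II).

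The main obstacle is piece (I) on $\Omega_1$, i.e.\ the quadrature error of the $m=0$ mode near the tangent direction at $x_0\in\s$. Both the sum $\Delta\al\sum_{\al_k\in\Omega_1}A_0(\al_k,\e)$ and the integral $\int_{\Omega_1}A_0(\al,\e)\,\dd\al$ are of order one by \eqref{partI-two}--\eqref{f11-lim}, and the naive pointwise Holder bound from the first line of Lemma~\ref{lem:Psi-cont} only yields an $O(\e^{\ga/2})$ quadrature error, which is strictly weaker than the target when $\ga<1$. The plan here is to exploit the closed form \eqref{I-v1-st5}: express the quadrature error as an oscillatory integral against $\tilde\chi_{H_0(0)}(\la)=(\widetilde{\chi_H})(\la)|_{H=H_0(0)}$, and show that the well-behaved level-set hypothesis on $H_0$ allows a dyadic (Littlewood--Paley type) decomposition of the fractal oscillations of $s\mapsto H_0(s)$ in which only $O(\ln(1/\e))$ scales contribute nontrivially, each at strength $O(\e^{1/2})$. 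Realizing this heuristic rigorously, especially obtaining a scale-by-scale quadrature estimate for the Weierstrass-type function $H_0$ using only the level-set hypothesis (rather than self-similarity of the specific example \eqref{H0-eq}), is the hard part and is precisely the obstruction that renders the conjecture open in general.
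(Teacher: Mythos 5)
You should note first that the statement you were asked to address is, in the paper, an open conjecture: the author proves only the partial result Lemma~\ref{lem:partial res} (the case $x_0\not\in\s$ with no tangent line through $x_0$) and explicitly states that the remaining cases require entirely new approaches. So there is no proof in the paper to match your proposal against, and your proposal is not a proof either --- you say so yourself in the final paragraph, where you identify the $m=0$ quadrature error on $\Omega_1$ for $x_0\in\s$ as an unresolved obstruction. That self-assessment is accurate and coincides with the paper's: the derivation of \eqref{f11-lim} establishes convergence with no rate, the pointwise Holder bound in Lemma~\ref{lem:Psi-cont} only gives $O(\e^{\ga/2})$, and the paper's own Appendix~\ref{sec new ker prf} stresses that a genuinely new mechanism (based on level sets of $H_0$, not on Holder regularity) is needed to beat that rate; your Littlewood--Paley heuristic is a plausible direction but is not carried out.

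Beyond the acknowledged gap, two quantitative steps in your plan would fail as stated. First, with $A=A(\e)=\ln(1/\e)$ the $\Omega_2$ contribution is, by the analogue of \eqref{partII-est}, only $O(A^{-\de})=O\bigl((\ln(1/\e))^{-\de}\bigr)$, which is nowhere near $O(\e^{1/2}\ln(1/\e))$ and cannot be ``absorbed'' into it; to reach the conjectured rate you would need the $\Omega_2$ estimate to improve polynomially in $\e$, i.e.\ the zone decomposition with a slowly growing $A$ is structurally inadequate for a rate statement (in the paper it is used only for the double limit $A\to\infty$ after $\e\to0$). Second, your treatment of the oscillatory piece on $\Omega_1$ invokes equidistribution of $\{mq_k\}$ plus Erd\H{o}s--Tur\'an to claim a rate $O(\e^{1/2}\ln(1/\e))$ uniform in $m$; but Definition~\ref{def:gen pt} only guarantees that $\kappa x_0\cdot\vec\tau$ is irrational, and for a merely irrational rotation number the discrepancy can decay arbitrarily slowly, so no quantitative bound --- let alone one uniform in $m$ --- follows without additional Diophantine hypotheses that the conjecture does not impose. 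Note that the paper's proof of Lemma~\ref{lem:partial res} deliberately avoids this route: it exploits cancellation in $\hat t$ via $\int\tilde\psi_m(\hat t)\,\dd\hat t=0$ and counts level-set crossings of $H_0$ (the equivalence classes $\Xi_n(\hat t,\e)$), which is why it needs no equidistribution at all. Any honest attack on the full conjecture should be expected to extend that level-set counting mechanism to the tangential zone rather than to reintroduce discrepancy estimates tied to the genericity condition.
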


To prove the conjecture one has to consider the case $x_0\in\s$ as well as the case $x_0\not\in\s$, when a line through $x_0$ is tangent to $\s$. The assumptions in Lemma~\ref{lem:partial res} and the conjecture are not vacuous in the following sense. If all the level sets of a function are well-behaved, this does not imply that the function is Lipschitz continuous. In Section~\ref{bad fn} we present an example of a function on $[0,\infty)$ (the Schwarz function \cite{Thim2003}), which is strictly monotonically increasing, locally Holder continuous with exponent $\ga\in(0,1)$, and is not locally Holder continuous with any exponent $\ga'>\ga$ on any interval. Using such a function as a building block, one can create a function with well-behaved level sets in the sense of Lemma~\ref{lem:partial res}. Due to the limited smoothness of such $H_0$, the derivation in Sections~\ref{sec:beg proof}, \ref{sec: rem sing} for the original DTB cannot guarantee convergence faster than $O(\e^{\ga/2})$, which is slower than the conjectured rate.

Our numerical experiments show that even when the level sets of a function are not well-behaved (see \cite{Rezakhanlou1988, Yu2020a} regarding level sets of the Weirstrass function), $\DTB_{new}$ still appears to exhibit rapid convergence faster than $O(\e^{\ga/2})$.



\appendix

\section{Proof of Lemma~\ref{lem:g1-props}}\label{sec:lemI}

We prove the lemma in the more complicated case $g_*=g$. The case $g_*=g_l$ is proven along the same lines, but many of the steps are simpler. Pick any $\al\in (-a,a)$. By construction, 
\be\label{aux-est}
\vec\al\cdot (y_*(\al+\auxang)-y_*(\al))=
\auxang^2 \psi(\auxang),\ \psi\in C^2([-a,a]),
\ee
where, by \eqref{short-cond},
\be\label{psi-props}\begin{split}
&\psi(\auxang)\ge c_1\text{ and } |\psi'(\auxang)|,|\psi''(\auxang)|\le c_2 \text{ if } |\al|,|\al+\auxang|\le a;\\ &\psi(\auxang)=(R(\al)/2)+O(|\auxang|),\auxang\to0,
\end{split}
\ee
for some $c_{1,2}>0$. The dependence of $\psi$ on $\al$ is irrelevant and omitted from notation. Therefore, by \eqref{g-fn-v2} 
\be\label{left-supp-g2}
\gc(\al,\hat p)\equiv 0 \text{ for }\hat p<c
\ee 
for some $c<0$ independent of $\al\in(-a,a)$. 

Suppose now $\hat p\to+\infty$. Since $w$ is compactly supported and $H_0$ is bounded, we can find $c>0$ sufficiently large so that the domain of integration with respect to $\auxang$ in \eqref{g-fn-v2} is contained inside the union of two non-intersecting intervals, whose endpoints are computed by solving 
\be\label{endpts}
|\tth|\psi^{1/2}(\e^{1/2}\tth)=(\hat p\pm c)^{1/2}.
\ee 
The positive pair of solutions $\tth_\pm^+>0$ determines one interval, and the negative pair $\tth_\pm^-<0$ - the other. Consider, for example, the positive pair. By \eqref{psi-props},
we get that $\dd\left[ \tth\psi^{1/2}(\e^{1/2}\tth)\right]/\dd\tth\ge c$ for some $c>0$ as long as $|\al|,|\al+\auxang|\le a$ and $\e>0$ is sufficiently small. Hence $\tth_+^+-\tth_-^+=O((\hat p+c)^{1/2}-(\hat p-c)^{1/2})=O(\hat p^{-1/2})$. The same argument applies to the negative pair, and \eqref{g-as-st2-pm} follows.

To prove the last assertion of the lemma, set $\gc=\gc^++\gc^-$, where
\be\label{g2-asympt}\begin{split}
\gc^+(\al,\hat p)=\int_0^{\e^{-1/2}(a-\al)}
\int^{H_0(\tilde\al+\tth)}_0
& w\left(\hat p-\tth^2 \psi(\e^{1/2}\tth)-\hat t\cos(\e^{1/2}\tth)\,\right)\\
&\times F(\al+\e^{1/2}\tth,\e\hat t) \dd\hat t \dd\tth,
\end{split}
\ee
$\tilde\al=\e^{-1/2}\al$, and $\gc^-$ is defined similarly by integrating over $(\e^{-1/2}(-a-\al),0]$ with respect to $\tth$. 
First, we consider $\gc^+$, so $\tth>0$.
Introduce the variable $\hat s=\tth^2\psi(\e^{1/2}\tth)$. 
By \eqref{psi-props}, $\tth'(s)$ is uniformly bounded for all $\e>0$ small enough whenever $\hat s$ is bounded away from zero. To indicate the dependence of $\tth(\hat s)$ on $\e$ we write $\tth_\e(\hat s)$.
Change variables $\tth=\tth_\e(\hat s)$ in \eqref{g2-asympt}:
\be\label{g2-st2}\begin{split}
\gc^+(\al,\hat p)=&\int_{\br}
\int^{H_0(\tilde\al+\tth)}_0
 w\left(\hat p-\hat s-\hat t\cos(\e^{1/2}\tth)\,\right)F(\al+\e^{1/2}\tth,\e\hat t) \dd\hat t\, \tth_\e'(\hat s)\dd\hat s,\\
\tth=&\tth_\e(\hat s^{1/2}).
\end{split}
\ee
Here we extended the integration with respect to $\hat s$ to $\br$. Even though $\tth_\e'(\hat s)$ is not defined for $\hat s>0$ sufficiently large, this is irrelevant because $F(\cdot)\equiv 0$ for such $\hat s$. Changing the lower limit does not change the integral either, because $w$ is compactly suported and $\hat p\to+\infty$. Using the argument following \eqref{g-fn-sigma3}, such an extension does not affect the smoothness of $F$.  
Similarly, 
\be\label{g2-dp}\begin{split}
\gc^+(\al,\hat p+\Delta\hat p)=&\int_\br
\int^{H_0(\tilde\al+\tth)}_0
w\left(\hat p-\hat s-\hat t\cos(\e^{1/2}\tth)\,\right)\\
&\times F(\al+\e^{1/2}\tth,\e\hat t) \dd\hat t\, \tth_\e'(\hat s+\Delta \hat p)\dd\hat s,\ \tth=\tth_\e(\hat s+\Delta \hat p).
\end{split}
\ee
As before, from \eqref{psi-props} we obtain
\be\label{del-th}\begin{split}
\tth_\e^{(k)}(\hat p)=O\left(\hat p^{1/2-k}\right),\ \hat p\to+\infty,\ k=0,1,2,
\end{split}
\ee
uniformly in $\e$. 

By \eqref{del-th}, dropping $\Delta\hat p$ in the argument of $\tth_\e$ in the argument of $H_0$ in the upper limit of the inner integral in \eqref{g2-dp} leads to an error of magnitude 
\be\label{main-err}
\hat p^{-1/2}O\left((|\Delta\hat p|/\hat p^{1/2})^{\ga}\right). 
\ee
Recall that $\Delta\hat p=O(\hat p^\de)$, $\de<1/2$, $\hat p\to+\infty$. Dropping $\Delta\hat p$ in $\tth_\e$, which is located in the arguments of $w$ and $F$, leads to an error of magnitude $(\e/\hat p)^{1/2}O\left(|\Delta\hat p|/\hat p^{1/2}\right)$. Dropping $\Delta\hat p$ from $\tth_\e'$ leads to an error of magnitude $O\left(|\Delta\hat p|/\hat p^{3/2}\right)$.

Under our assumptions $\ga<1$ and $\de<1/2$, so all the error terms are dominated by \eqref{main-err}. 
Subtracting \eqref{g2-st2} from \eqref{g2-dp} we prove that $\gc^+(\al,\hat p)$ satisfies the estimate in
\eqref{delg-bnd-st2-pm}.
Similar arguments and similar estimates hold for $\gc^-(\al,\hat p)$ as well, and \eqref{delg-bnd-st2-pm} is proven. 

\section{Proof of Lemma~\ref{lem:Psi-asympt}}\label{sec:prf-lemPsi}

Using that $(\CH\ik')(t)=O(t^{-2})$, $t\to\infty$, we obtain for some $c$ using \eqref{left-supp-pm}, \eqref{g-as-st2-pm}:
\be\label{G-left}
|\Psil(\tilde\al,\hat p,q)|\le c\sum_{j\ge -c} \frac1{1+(|\hat p|+j)^2}\frac1{1 +|j|^{1/2}}
=O(|\hat p|^{-3/2}),\ \hat p\to-\infty,\,q\in[0,1).
\ee
Fix any $\de$, $0<\de<1/2$. Similarly to \eqref{G-left}, we can show that
\be\label{G-right}
\sum_{|j-\hat p|\ge p^\de} (\CH\ik')\left(\hat p-j\right)\gl(\al,j-q)
=O(\hat p^{-(1/2+\de)}),\ \hat p\to +\infty,\,q\in[0,1).
\ee
Split the remaining sum into two:
\be\label{G-interior}\begin{split}
\sum_{|j-\hat p|< \hat p^\de}& (\CH\ik')\left(\hat p-j\right)\gl(\al,j-q)\\
=&\sum_{|j-\hat p|< \hat p^\de} (\CH\ik')\left(\hat p-j\right)(\gl(\al,j-q)-\gl(\al,\hat p-q))\\
&+\gl(\al,\hat p-q)\sum_{|j-\hat p|< \hat p^\de} (\CH\ik')\left(\hat p-j\right)=:S_1+S_2.
\end{split}
\ee
Clearly, $\CH\ik'(t)=O(t^{-2})$, $t\to\infty$.
Combining with \eqref{holder}, \eqref{g-as-st2-pm}, and \eqref{delg-bnd-st2-pm}, we find
\be\label{S1est}\begin{split}
S_1=& O\left(\hat p^{\ga(\de-1/2)-1/2}\right)\sum_{|j-\hat p|< \hat p^\de} (\CH\ik')(\hat p-j)
=O\left(\hat p^{\ga(\de-1/2)-1/2}\right),\ \hat p\to+\infty.
\end{split}
\ee
Moreover, by the exactness of $\ik$ (property IK2),
\be\label{ker-pr}
\sum_{|j-\hat p|< \hat p^\de} \ik'\left(r-j\right)\not=0\text{ only if }|r-(\hat p-\hat p^\delta)|\le c \text{ or } |r-(\hat p+\hat p^\delta)|\le c
\ee
for some $c$. Hence,
\be\label{S2est}\begin{split}
S_2=O\left(\hat p^{-1/2}(1+\hat p^{2\de})^{-1}\right)=O\left(\hat p^{-1/2-2\de}\right),\ 
\hat p\to +\infty,\,q\in[0,1).
\end{split}
\ee
Combining \eqref{G-right}, \eqref{G-interior}, \eqref{S1est}, and \eqref{S2est} gives
\be\label{Psi-right}
|\Psil(\tilde\al,\hat p,q)|=O(\hat p^{-(1/2+\de)}),\ \de=(\ga/2)/(\ga+1),\ \hat p\to+\infty,\,q\in[0,1).
\ee
The choice of $\de$ in \eqref{Psi-right} satisfies $0<\de<1/2$ and provides the fastest guaranteed rate of decay of $\Psil$. Combining \eqref{G-left} and \eqref{Psi-right} (and replacing $\de$ with $\de/2$ for notational convenience) proves the lemma. 

\section{Proof of Lemma~\ref{lem:partial res}}\label{sec new ker prf}

Pick any $x$ sufficiently close to $x_0$. All the estimates below are uniform with respect to $x$, so the $x$-dependence of various quantities is frequently omitted from notation. 

Let $\Omega_x$ be the set of all $\al\in[-\pi/2,\pi/2]$ such that the lines $\{y\in\br^2:\,(y-x)\cdot\vec\al=0\}$ intersect $\s$. Let $\theta(\al)$, $\al\in\Omega_x$, be determined by solving $(y_*(\theta)-x)\cdot\vec\al=0$. By assumption, the intersection is transverse for any $\al\in\Omega_x$ (up to the endpoints). Hence 
$|\theta'(\al)|=|y_*(\theta(\al))-x|/|\vec\al\cdot y_*'(\theta(\al))|$ and 
\be\label{theta deriv min}
0<\min_{\al\in\Omega_x}|\vec\al\cdot y_*'(\theta(\al))|,\ 
0<\min_{\al\in\Omega_x}|\theta'(\al)|\le \max_{\al\in\Omega_x}|\theta'(\al)|<\infty.
\ee

Transform the expression for $A_m$ (cf. \eqref{recon-ker-v2}) similarly to \eqref{g-fn}:
\be\label{A-simpl}
\begin{split}
A_m(\al,\e)=\frac1\e\int_{-a}^a\int_0^{\e^{-1}H_\e(\theta)}\tilde\psi_m\left(\frac{\vec\al\cdot (y_*(\theta)-x)}\e+\hat t\cos(\theta-\al)\right)F(\theta,\e \hat t)\dd\hat t\dd\theta,
\end{split}
\ee
where $F$ is the same as in \eqref{g-fn}. Setting $\tilde\theta=(\theta-\theta(\al))/\e^{1/2}$,
\eqref{A-simpl} becomes
\be\label{A-simpl_2}
\begin{split}
A_m(\al,\e)=&\e^{-1/2}\int\int_0^{\e^{-1}H_\e(\theta)}\tilde\psi_m\left(\frac{\vec\al\cdot (y_*(\theta)-y_*(\theta(\al)))}\e+\hat t\cos(\theta-\al)\right)\\
&\times F(\theta,\e \hat t)\dd\hat t\dd\tilde \theta,\quad
\theta=\theta(\al)+\e^{1/2}\tilde\theta.
\end{split}
\ee
Due to \eqref{four-coef-bnd}, we can integrate with respect to $\tilde\theta$ over any fixed neighborhood of $0$:
\be\label{A-simpl-st2}
\begin{split}
&A_m(\al,\e)\\
&=\e^{-1/2}\int_{-\de}^{\de}\int_0^{\e^{-1}H_\e(\theta)}\tilde\psi_m\left(\frac{\vec\al\cdot y_*'(\theta(\al))}{\e^{1/2}}\tilde\theta+O(\tilde\theta^2)+\hat t\cos(\theta(\al)-\al)+O(\e^{1/2})\right)\\
&\quad\times \left(F(\theta(\al),0)+O(\e^{1/2})\right)\dd\hat t\dd\tilde \theta+(1+m^2)^{-1}O(\e^{1/2})\\
&=\frac{F(\theta(\al),0)}{\e^{1/2}}\int_{-\de}^{\de}\int_0^{\e^{-1}H_\e(\theta)}\tilde\psi_m\left(\frac{\vec\al\cdot y_*'(\theta(\al))}{\e^{1/2}}\tilde\theta+O(\tilde\theta^2)+\hat t\cos(\theta(\al)-\al)\right)\dd\hat t\dd\tilde \theta\\
&\quad+(1+m^2)^{-1}O(\e^{1/2}),\qquad \theta=\theta(\al)+\e^{1/2}\tilde\theta,
\end{split}
\ee
for some $\de>0$ sufficiently small. Here we have used that 
\be\label{psi_psipr}
\tilde\psi_m(t),\tilde\psi_m'(t)=(1+m^2)^{-1}O(t^{-2}), \ t\to\infty,
\ee
which follows from \eqref{recon-ker-0}, \eqref{four-ser}. Similarly, it is easy to see that the term $O(\tilde\theta^2)$ can be omitted from the argument of $\tilde\psi_m$ without changing the error term, and we find
\be\label{A-simpl-st3}
\begin{split}
A_m(\al,\e)
=&\frac{F(\theta(\al),0)}{\e^{1/2}}\int_{-\de}^{\de}\int_0^{H_0(\e^{-1/2}\theta(\al)+\tilde\theta)}\tilde\psi_m\left(a(\al)\e^{-1/2}\tilde\theta+b(\al)\hat t\right)\dd\hat t\dd\tilde \theta\\
&+(1+m^2)^{-1}O(\e^{1/2}),\quad 
a(\al):=\vec\al\cdot y_*'(\theta(\al)),\ b(\al):=\cos(\theta(\al)-\al).
\end{split}
\ee
By \eqref{theta deriv min}, $a(\al)$ is bounded away from zero on $\Omega_x$.  By the last equation in \eqref{psi-props-0}, $\int\tilde\psi_m(\hat t)\dd\hat t=0$ for all $m$, so we can replace the lower limit in \eqref{A-simpl-st3} with any value independent of $\tilde\theta$. Again, we use here that the contribution to the integral with respect to $\tilde\theta$ of the domain outside $(-\de,\de)$ is of the same magnitude as the error term in \eqref{A-simpl-st3}. We choose the lower limit to be $H_0(\e^{-1/2}\theta(\al))$. 
Define
\be\label{sum-k}
\begin{split}
B_m(\tilde\theta,\e)
:=&\Delta\al\sum_{\al_k\in\Omega_x} e\left(-m q_k\right) F(\theta(\al_k),0)\\
&\times\int_{H_0(s_k)}^{H_0(s_k+\tilde\theta)}\tilde\psi_m\left(a(\al_k)\e^{-1/2}\tilde\theta+b(\al_k)\hat t\right)\dd\hat t,\ s_k:=\e^{-1/2}\theta(\al_k).
\end{split}
\ee
The factor $\e^{-1/2}$ in front of the double integral in \eqref{A-simpl-st3} will be accounted for when integrating with respect to $\tilde\theta$ below (see \eqref{last bnd}).

Define similarly to \eqref{H-def}:
\be\label{H-def alt}\begin{split}
\chi_{t_1,t_2}(t):=&\begin{cases}1,&t_1\le t\le t_2,\\
0,&t\not\in [t_1,t_2],\end{cases} \text{ if $t_1<t_2$},\ \chi_{t_1,t_2}(t):=\begin{cases}-1,&t_2\le t\le t_1,\\
0,&t\not\in [t_2,t_1],\end{cases} \text{ if $t_2<t_1$},
\end{split}
\ee
and rewrite \eqref{sum-k} in the form
\be\label{sum-k alt}
\begin{split}
B_m(\tilde\theta,\e)
=&\int \Delta\al\sum_{\al_k\in\Omega_x}G_m(\e^{-1/2}\tilde\theta,\hat t,\al_k) e\left(-m q_k\right) \chi_{H_0(s_k),H_0(s_k+\tilde\theta)}(\hat t)\dd\hat t\\
=&\int W_{\e,m}(\tilde\theta,\hat t) \dd\hat t,
\end{split}
\ee
where
\be\label{G and a_k b_k}
\begin{split}
&G_m(r,\hat t,\al):=F(\theta(\al),0)\tilde\psi_m\left(a(\al)r+b(\al)\hat t\right),\\
&W_{\e,m}(\tilde\theta,\hat t):=
\Delta\al\biggl[\sum_{\substack{\al_k\in\Omega_x:\\H_0(s_k)< \hat t<H_0(s_k+\tilde\theta)}}
-\sum_{\substack{\al_k\in\Omega_x:\\H_0(s_k+\tilde\theta) < \hat t<H_0(s_k)}}\biggr]
G_m(\e^{-1/2}\tilde\theta,\hat t,\al_k) e\left(-m q_k\right).
\end{split}
\ee
By ignoring finitely many values of $\hat t$, we assume here and below that $\hat t\not= H_0(s_k)$ and $t\not=H_0(s_k+\tilde\theta)$ for any $k$.

Suppose, for example, that $\tilde\theta>0$. We say that $s_k$ and $s_j$ are {\it equivalent} for a given $\tilde\theta$, $\hat t$ and $\e$ if 
\be\label{equiv rel}\begin{split}
&\text{(a) the sets }(s_k,s_k+\tilde\theta)\cap H_0^{-1}(\hat t) \text{ and }(s_j,s_j+\tilde\theta)\cap H_0^{-1}(\hat t) \text{ are the same,} \\
&\text{(b) the number of points in this set is odd.}
\end{split}
\ee
Clearly, this is an equivalence relation, so it splits a subset of $s_k$ into equivalence classes $\Xi_n(\hat t,\e)$, $n=1,2,\dots, N(\hat t,\e)$. The requirement in (b) is necessary, because otherwise either $\hat t>H_0(s_k),H_0(s_k+\tilde\theta)$ or $\hat t<H_0(s_k),H_0(s_k+\tilde\theta)$.

The number of classes does not exceed the number of points in $(\min s_k,\max s_k+\de)\cap H_0^{-1}(\hat t)$, where the minimum and maximum are computed over $k$ such that $\al_k\in\Omega_x$. Since $\max s_k-\min s_k=O(\e^{-1/2})$, the assumption of the lemma implies that the number of classes satisfies $N(\hat t,\e)=O(\e^{-1/2})$ uniformly in $\hat t$.
The sum in \eqref{G and a_k b_k} can now be written as a double sum
\be\label{dbl sum}
\begin{split}
&W_{\e,m}(\tilde\theta,\hat t):=
\Delta\al
\sum_{n=1}^{N(\hat t,\e)} (\pm 1) \sum_{k \in \Xi_n(\hat t,\e)}
G_m(\e^{-1/2}\tilde\theta,\hat t,\al_k) e\left(-m q_k\right).
\end{split}
\ee
In particular, the factor $+1$ or $-1$ is the same for all the terms in the same class.

Pick any $n$ and consider the set $\Xi_n(\hat t,\e)$. As is easily seen, this is a set of consecutive $k$'s, so
$\Xi_n(\hat t,\e)=\{k_f,k_f+1,\dots,k_l\}$ for some $k_f\le k_l$. Pick any $s^*\in (s_k,s_k+\tilde\theta)\cap H_0^{-1}(\hat t)$, where $k\in\Xi_n(\hat t,\e)$ is arbitrary. Then $s^*-\tilde\theta < s_k <s^*$ for all $k\in\Xi_n(\hat t,\e)$, and by \eqref{theta deriv min}
\be\label{del-k}
k_l-k_f \le \e^{-1/2}\tilde\theta/(\kappa|\theta'(\al^*)|)+O(\tilde\theta^2),
\ee
where the term $O(\tilde\theta^2)$ is uniform in $n$ and $\hat t$, and $\al^*$ solves $s^*=\e^{-1/2}\theta(\al^*)$. If the right-hand side of \eqref{del-k} drops below $1$, then $k_f=k_l$. Now we estimate $W_{\e,m}$ using \eqref{theta deriv min}, \eqref{psi_psipr}, \eqref{G and a_k b_k}, and \eqref{dbl sum}:
\be\label{W est}
\begin{split}
&|W_{\e,m}(\tilde\theta,\hat t)|\le
c\frac{\e^{1/2}}{1+m^2}\frac{\max(1,\e^{-1/2}\tilde\theta)}{1+(\tilde\theta^2/\e)}.
\end{split}
\ee
Since the set of $\hat t$ values (the range of $H_0$) is uniformly bounded, the same estimate as in \eqref{W est} applies to $B_m(\tilde\theta,\e)$, and we find
\be\label{last bnd}
\sum_m \e^{-1/2}\int_{0}^{\de} |B_m(\tilde\theta,\e)|\dd \tilde\theta
=O(\e^{1/2}\ln(1/\e)).
\ee
The case $\tilde\theta<0$ can be considered in the same fashion, and the lemma is proven.

The estimate in \eqref{last bnd} is better than $O(\e^{\ga/2})$ if $\ga<1$ (cf \eqref{partI-three}). Using only the smoothness of $H_0$ (i.e., \eqref{holder}), which implies that $H_0(s_k+\tilde\theta)-H_0(s_k)=O(|\tilde\theta|^\ga)$, we easily recover the result \eqref{partI-three}. Hence a truly novel mechanism (e.g., based on the consideration of level sets of $H_0$) is needed to establish the faster decay rates in \eqref{extra terms}. 

\section{Example of a monotone, Holder continuous function, which is non-differentiable in a dense set}
\label{sec:bad fn}

Set
\be\label{bad fn}
H_0(s)=\sum_{n=0}^\infty \varphi(2^n s)/3^n,\ \varphi(s):=\lfloor s\rfloor+\{s\}^{\ga},
\ee
where $0<\ga<1$. Obviously, the series above converges absolutely for any fixed $s$, and $H_0$ is strictly monotonically increasing. 

Next we show that $H_0$ is locally Holder continuous with exponent $\ga$. We have 
\be\label{aux-ineq}
|\varphi(s_2)-\varphi(s_1)|\le c\max(|s_2-s_1|^\ga,|s_2-s_1|),\ s_1,s_2\ge0,
\ee
for some $c$. The case $|s_2-s_1|\ge 1$ is obvious, so we assume $|s_2-s_1|\le 1$ and show that $\varphi(s_2)-\varphi(s_1)=O(|s_2-s_1|^\ga)$. Clearly, suffices it to consider the case $s_1=n-1+r_1$, $s_2=n+r_2$, where $0\le r_{1,2}< 1$. We will show that
\be\label{aux-1}
(n+r_2^\ga)-(n-1+r_1^\ga)\le (1+r_2-r_1)^\ga.
\ee
The case $r_2\ge r_1$ is obvious (and should not be considered anyway, because $s_2-s_1\ge 1$ in this case), so we assume $r_2=r_1-h$, where $0\le h\le r_1$. Then \eqref{aux-1} becomes
\be
(r_1-h)^\ga+1-r_1^\ga\le (1-h)^\ga.
\ee
Differentiating the left-hand side we see that it is increasing as a function of $r_1\in [h,1]$. Setting $r_1=1$ shows that the inequality holds. 

Pick any $h>0$ and consider the difference
\be
H_0(s+h)-H_0(s)=\left(\sum_{n\ge0:2^n h <1}+\sum_{n:2^n h \ge 1}\right) \frac{\varphi(2^n (s+h))-\varphi(2^n s)}{3^n}=:S_1+S_2.
\ee
By \eqref{aux-ineq},
\be
|S_1|\le c h^\ga \sum_{n\ge0:2^n h <1}2^{\ga n}/3^n\le ch^\ga,\
|S_2|\le c h \sum_{n:2^n h \ge 1}2^{n}/3^n\le ch,
\ee
which proves Holder continuity. Here $c$ denote various constants, which can be different in different places.

Finally we show that $H_0(s)$ is not Holder continuous with any exponent $\ga'>\ga$ on any interval. Pick any $j,m\in\mathbb N$ and set $s=j2^{-m}$. Pick any $h\in(0,2^{-m})$. Using that $\varphi$ is increasing gives
\be\label{finite-diff}
\frac{H_0(s+h)-H_0(s)}{h^{\ga'}} \ge \frac{\varphi(2^m(s+h))-\varphi(2^m s)}{3^m h^{\ga'}}
=\frac{(j+(2^m h)^\ga)-j}{3^m h^{\ga'}}=ch^{\ga-\ga'},
\ee
and the desired assertion follows because diadic integers are dense in $\br$.

\bibliographystyle{plain}
\bibliography{My_Collection}
\end{document}